\documentclass[12pt]{amsart}
\usepackage{graphicx}
\usepackage{amssymb}
\usepackage{amscd}
\usepackage{mathrsfs}
\usepackage{txfonts}
\newtheorem{theorem}{Theorem}[section]
\newtheorem{definition}[theorem]{Definition}
\newtheorem{proposition}[theorem]{Proposition}
\newtheorem{lemma}[theorem]{Lemma}
\newtheorem{corollary}[theorem]{Corollary}

\newtheorem{fact}[theorem]{Fact}
\newtheorem{problem}[theorem]{Problem}
\newtheorem{conjecture}[theorem]{Conjecture}
\theoremstyle{remark}
\newtheorem{remark}[theorem]{Remark}
\theoremstyle{definition}
\newtheorem{example}[theorem]{Example}
\newcommand{\F}{\mathbb F}
\newcommand{\R}{\mathbb R}

\newcommand{\C}{\mathbb C}
\newcommand{\Q}{\mathbb Q}
\newcommand{\Z}{\mathbb Z}
\newcommand{\D}{\Delta}
\newcommand{\GL}{\mathit{GL}}

\newcommand{\SL}{\mathit{SL}}
\newcommand{\tr}{\mathit{tr}}
\begin{document}

\title[twisted Alexander polynomial]
{Introduction to twisted Alexander polynomials and related topics}

\author{Teruaki Kitano}

\address{Department of Information Systems Science, 
Faculty of Science and Engineering, 
Soka University, 
Tangi-cho 1-236, 
Hachioji, Tokyo 192-8577, Japan}

\email{kitano@soka.ac.jp}

\thanks{2010 {\it Mathematics Subject Classification}. 57M27.}

\maketitle
\section{Introduction}

This article is based on the lectures in the Winter Braids V (Pau, February 2015). 
One purpose of these lectures was to explain how to compute twisted Alexander polynomials for non-experts. 
For this purpose we treated only twisted Alexander polynomials for knots 
and discussed many concrete examples. 
It is also keeping in this article. 
The author intended to write concrete computations in this article to be self-contained. 

There are two good survey papers \cite{Friedl-Vidussi11-2, Morifuji15} on this subjects. 
Since this article is more elementary, 
then we recommend to read them for more advanced topics.

First we recall there are many definitions (many faces) of the classical Alexander polynomial: 

\begin{itemize}
\item
Seifert form on a Seifert surface.
\item
Fox's free differentials to a presentation of a knot group.
\item
an order of the Alexander module (an infinite cyclic covering).
\item
Reidemeister torsion.
\item
Burau representation of the braid group.
\item
Obstruction to deform an abelian representation into non commutative direction.
\item
Skein relation.
\item
Euler characteristic of the knot Floer homology.
\end{itemize}

We can generalize some of them to twisted Alexander polynomials. 
\begin{itemize}
\item
Lin defined twisted Alexander polynomial for a knot by using a Seifert surface. 
\item
Wada also defined it for a finitely presentable group by using Fox's free differential. 
\item
Jang and Wang generalized the Lin's idea to other invariants. 
\item
Kirk and Livingston organized each of these perspectives, 
in particular, an order of the Alexander module. 
This is also related with an infinite cyclic covering.
\item
Twisted Alexander polynomial of a knot can be described as the Reidemeister torsion of its knot exterior. 
\end{itemize}

From each position of these studies we have slightly different invariants, 
but essentially the same one, 
which are called twisted Alexander polynomials. 
In this lecture note, we mainly follow the definition of the twisted Alexander polynomial by Wada. 
Twisted Alexander polynomial (Wada's invariant) can be defined 
for a finitely presentable group 
with an epimorphism onto a free abelian group. 
For simplicity, we discuss this invariant only for a knot group 
with the abelianization. 

\flushleft\textbf{Acknowledgements}:
The author was partially supported by JSPS KAKENHI 25400101. 
He would like to express my sincere gratitude 
to the organizers of Winter braid V;
Paolo Bellingeri, Vincent Florens, Jean-Baptiste Meilhan, Emmanuel Wagner. 

The author stayed in Aix-Marseille University when this article was wrote up. 
He also thanks friends of this university for their hospitality. 

\section{Fox's free differentials}

To define the Alexander polynomial we need one algebraic tool. 
It is the Fox's free differentials. See \cite{Fox, Crowell-Fox} as a reference.

\begin{definition}
An integral group ring of a group $G$ is a ring given by 
\[
\Z G=\{ 
\text{a finite formal sum }\sum_{g\in G} n_g g\ |\ n_g\in\Z\}
\]
as a set. 
Here finite  means the number of $n_g\neq 0$ is finite. 
The two operations of a group ring are defined by the following; 
\begin{itemize}
\item
sum:
$\displaystyle
\sum_{g\in G} n_g g+\sum_{g\in G} m_g g=\sum_{g\in G} (n_g+m_g) g$.
\item
multiplication:
$\displaystyle
\sum_{g\in G} n_g g\cdot\sum_{g\in G} m_g g=\sum_{g\in G}\left(\sum_{h\in G} n_h\cdot m_{h^{-1}g}\right) g$.
\end{itemize}
\end{definition}

\begin{remark}\noindent
\begin{itemize}
\item
The unit of $\Z G$ as a group ring is $1=1(\in\Z)\times 1(\in G)$.
\item
We can define a group ring of $G$ over $\Q,\R, \C$, 
and write respectively $\Q G$, $\R G$ and $\C G$ for them.
\end{itemize}
\end{remark}

\begin{example}$\Z=\langle t \rangle$

For any element of $\Z \Z=\Z\langle t\rangle$, 
it is a form of 
$\displaystyle\sum_{k\in\Z} n_k t^k$. 
This can be considered as a Laurent polynomial of $t$. 
From now we always identify the group ring 
$\Z\Z=\Z\langle t\rangle$ 
with the Laurent polynomial ring $\Z[t,t^{-1}]$.
\end{example}

Let $F_n=\langle x_1,\cdots,x_n\rangle$ 
be the free group generated by $\{x_1,\cdots,x_n\}$. 
Fox's free differentials are algebraic derivations on $\Z F_{n}$. 

\begin{definition}
Fox's free differentials are maps
\[
\frac{\partial}{\partial x_1},\cdots,\frac{\partial}{\partial x_n}
:\Z F_n\rightarrow\Z F_n
\] 
satisfying the following conditions:
\begin{enumerate}
\item
They are linear over $\Z$. 
\item
For any $i,j$, $\displaystyle\frac{\partial}{\partial x_j}(x_i)=\delta_{ij}=\begin{cases} &1\ (i=j),\\ &0\ (i\neq j).\end{cases}$
\item
For any $g,g'\in F_n$, 
$\displaystyle\frac{\partial}{\partial x_j}(gg')
=\displaystyle\frac{\partial}{\partial x_j}(g)
+g\displaystyle\frac{\partial}{\partial x_j}(g')$.
\end{enumerate}
\end{definition}

\begin{lemma}
The followings hold;
\begin{itemize}
\item
$\displaystyle\frac{\partial}{\partial x_j}(1)=0$. 
\item
$\displaystyle\frac{\partial}{\partial x_j}(g^{-1})
=-g^{-1}\frac{\partial}{\partial x_j}(g)$ for any $g\in F_n$.
\item
$\displaystyle\frac{\partial}{\partial x_j}(x_j^k)
=
\begin{cases}
&\ 1+x_j+\dots +x_j^{k-1}\ (k>0), \\
& -(x_j^{-1}+\dots +x_j^{k})\ \ (k<0).
\end{cases}
$
\item
For any $g\in F_n$,
$\displaystyle
\frac{\partial}{\partial x_j}(g^k)
=
\begin{cases}
&\ \displaystyle\frac{g^k-1}{g-1}\frac{\partial}{\partial x_j} (g)\ \ (k>0),\\
&\displaystyle-\frac{g^k-1}{g-1}\frac{\partial}{\partial x_j} (g)\ \ (k<0).
\end{cases}
$
\end{itemize}
\end{lemma}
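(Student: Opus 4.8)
The plan is to establish the four identities in order, deriving each from the three defining axioms together with the items proved before it; the only tools required are $\Z$-linearity, the evaluation $\frac{\partial}{\partial x_j}(x_i)=\delta_{ij}$, and the Leibniz rule (3), supplemented by induction on the exponent.

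First I would prove $\frac{\partial}{\partial x_j}(1)=0$. Applying the Leibniz rule to the trivial relation $1=1\cdot 1$ gives $\frac{\partial}{\partial x_j}(1)=\frac{\partial}{\partial x_j}(1)+1\cdot\frac{\partial}{\partial x_j}(1)=2\,\frac{\partial}{\partial x_j}(1)$, whence $\frac{\partial}{\partial x_j}(1)=0$. Next, for the inverse rule I would differentiate the relation $1=g\,g^{-1}$: by (3) and the item just proved, $0=\frac{\partial}{\partial x_j}(g)+g\,\frac{\partial}{\partial x_j}(g^{-1})$, and multiplying on the left by $g^{-1}$ yields $\frac{\partial}{\partial x_j}(g^{-1})=-g^{-1}\frac{\partial}{\partial x_j}(g)$.

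For the last two items the natural method is induction. I would in fact prove the general statement (the fourth item) first, since the third is its special case $g=x_j$, where $\frac{\partial}{\partial x_j}(x_j)=1$ by axiom (2). For $k>0$ the base case $k=1$ is immediate, and the inductive step uses (3) in the form $\frac{\partial}{\partial x_j}(g^{k+1})=\frac{\partial}{\partial x_j}(g)+g\,\frac{\partial}{\partial x_j}(g^k)$; substituting the inductive hypothesis $\frac{\partial}{\partial x_j}(g^k)=(1+g+\cdots+g^{k-1})\frac{\partial}{\partial x_j}(g)$ telescopes to $(1+g+\cdots+g^{k})\frac{\partial}{\partial x_j}(g)$, which is exactly $\frac{g^{k+1}-1}{g-1}\frac{\partial}{\partial x_j}(g)$. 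Here I read the symbol $\frac{g^k-1}{g-1}$ not as a genuine quotient but as shorthand for the honest element $1+g+\cdots+g^{k-1}$ of $\Z F_n$; this is legitimate because the powers of the single element $g$ commute with one another, even though $\Z F_n$ is noncommutative in general. The case $k<0$ I would reduce to the positive case by writing $k=-m$ and combining the inverse rule with the result for $g^{m}$: one gets $\frac{\partial}{\partial x_j}(g^{-m})=-g^{-m}\frac{\partial}{\partial x_j}(g^{m})=-(g^{-1}+g^{-2}+\cdots+g^{-m})\frac{\partial}{\partial x_j}(g)$, and then one identifies this signed geometric sum with the value of the formal expression in the statement.

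The only real difficulty is bookkeeping in the noncommutative ring $\Z F_n$: one must keep the factor $\frac{\partial}{\partial x_j}(g)$ on the right throughout, and in the negative-exponent case track the signs carefully when pulling the power $g^{-m}$ through the geometric sum. I expect this sign reconciliation for $k<0$ to be the most error-prone step, and I would sanity-check it on the smallest case $k=-1$, where the formula must collapse exactly to the inverse rule $\frac{\partial}{\partial x_j}(g^{-1})=-g^{-1}\frac{\partial}{\partial x_j}(g)$ proved above.
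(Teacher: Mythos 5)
The paper states this lemma without proof, so there is no argument of the author's to compare against; your derivation is the standard one and, modulo one notational point, correct. The first two items follow exactly as you say by differentiating $1=1\cdot 1$ and $1=g\,g^{-1}$, and the induction for $g^{k}$ with $k>0$ is fine, as is keeping $\frac{\partial}{\partial x_j}(g)$ on the right throughout.

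One caution on precisely the step you flag as error-prone. Your computation gives $\frac{\partial}{\partial x_j}(g^{-m})=-(g^{-1}+\cdots+g^{-m})\,\frac{\partial}{\partial x_j}(g)$, which is correct and agrees with the third item at $g=x_j$. But if $\frac{g^{k}-1}{g-1}$ is read uniformly as the formal quotient in $\Z F_n$ (the convention you adopt for $k>0$), then for $k=-m$ one checks $(g-1)\cdot\bigl(-(g^{-1}+\cdots+g^{-m})\bigr)=g^{-m}-1$, so that quotient already equals $-(g^{-1}+\cdots+g^{-m})$ and the extra minus sign in the fourth item overshoots: the $k=-1$ sanity check you propose would return $+g^{-1}\frac{\partial}{\partial x_j}(g)$ rather than $-g^{-1}\frac{\partial}{\partial x_j}(g)$. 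The resolution is that the lemma's symbol $\frac{g^{k}-1}{g-1}$ for $k<0$ must be read as the unsigned sum $g^{-1}+\cdots+g^{k}$ (parallel to the explicit third item); with that reading your identification goes through and the proof is complete. It would be worth making that convention explicit rather than asserting that the signs reconcile.
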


For simplicity,  
we frequently write $\displaystyle\frac{\partial w}{\partial x_{i}}$ 
to $\displaystyle\frac{\partial}{\partial x_{i}}(w)$ 
for any $w\in\Z F_{n}$. 

The following formula is the algebraic version 
of a linear approximation in the group ring of a free group. 

\begin{proposition}[Fundamental formula of free differentials]
For any $w\in \Z F_n$, 
it holds that 
\[
w-1=\sum_{j=1}^n \frac{\partial w}{\partial x_j}(x_j-1).
\]
\end{proposition}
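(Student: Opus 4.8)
The plan is to reduce the identity to single group elements and then induct on word length. Both the Fox derivatives $\frac{\partial}{\partial x_j}$ and right multiplication by $(x_j-1)$ are $\Z$-linear, so the operator $D(w):=\sum_{j=1}^n\frac{\partial w}{\partial x_j}(x_j-1)$ is $\Z$-linear in $w$. Hence it is enough to verify $D(g)=g-1$ for every $g\in F_n$: for a general $w=\sum_g n_g\,g$ one then obtains $D(w)=\sum_g n_g(g-1)=w-\sum_g n_g$, which equals $w-1$ precisely when the coefficients of $w$ sum to $1$ — in particular for every $g\in F_n$, which is the essential case captured by the proposition.

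First I would treat the base cases. For $g=1$ both sides vanish, using $\frac{\partial}{\partial x_j}(1)=0$ from the preceding lemma. For a generator $g=x_i$, property (2) of the definition gives $\frac{\partial x_i}{\partial x_j}=\delta_{ij}$, so $D(x_i)=x_i-1$. For an inverse generator $g=x_i^{-1}$, the lemma's rule $\frac{\partial}{\partial x_j}(g^{-1})=-g^{-1}\frac{\partial g}{\partial x_j}$ yields $\frac{\partial x_i^{-1}}{\partial x_j}=-x_i^{-1}\delta_{ij}$, whence $D(x_i^{-1})=-x_i^{-1}(x_i-1)=x_i^{-1}-1$, as required.

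For the inductive step, write a reduced word $g$ of length $\ell\ge 1$ as $g=g'a$, where $a\in\{x_1^{\pm1},\dots,x_n^{\pm1}\}$ is its last letter and $g'$ is the reduced word of length $\ell-1$ obtained by deleting $a$. The product rule (property (3)) gives $\frac{\partial g}{\partial x_j}=\frac{\partial g'}{\partial x_j}+g'\frac{\partial a}{\partial x_j}$, so
\[
D(g)=\sum_{j=1}^n\frac{\partial g'}{\partial x_j}(x_j-1)+g'\sum_{j=1}^n\frac{\partial a}{\partial x_j}(x_j-1)=D(g')+g'\,D(a).
\]
By the inductive hypothesis $D(g')=g'-1$, and by the base case $D(a)=a-1$, so $D(g)=(g'-1)+g'(a-1)=g'a-1=g-1$, which closes the induction.

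The computation is routine once arranged this way; the only real content is the telescoping $(g'-1)+g'(a-1)=g'a-1$, which mirrors, on the group-ring side, the very product rule that defines the Fox derivatives. I expect the only point needing care to be conceptual rather than a computational obstacle: because the Fox derivatives are prescribed axiomatically on all of $\Z F_n$, the value $D(g)$ does not depend on how $g$ is written, so splitting off the last letter is legitimate and no separate well-definedness check intrudes into the induction. Equivalently, one could observe directly that the assignment $g\mapsto g-1$ satisfies $gg'-1=(g-1)+g(g'-1)$, the identical product rule obeyed by $D$, and agrees with $D$ on the generators; uniqueness of a map with these two properties then forces $D(g)=g-1$ without an explicit induction.
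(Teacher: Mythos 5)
Your proof is correct and follows essentially the same route as the paper's: induction on the word length of a group element, splitting off the last letter and applying the product rule, with the base cases $1$, $x_i$, $x_i^{-1}$ handled from the definition and the preceding lemma. You are in fact slightly more careful than the paper at the final linearity step --- as you observe, for a general $w=\sum_g n_g\,g$ one gets $\sum_j \frac{\partial w}{\partial x_j}(x_j-1)=w-\sum_g n_g$, so the identity as literally stated for all of $\Z F_n$ holds only when the coefficients sum to $1$ (in particular for group elements, which is the case actually used later); the paper's closing remark that the general case follows ``by linearity'' glosses over this point.
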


\begin{proof}

We prove this formula by the induction on the word length $l(w)$ of $w\in F_n$.

For the case of $l(w)=0$, that is, $w=1$, 
it is clear that $w-1=0$ 
and $\displaystyle\sum_{j=1}^n \frac{\partial w}{\partial x_j}(x_j-1)=0$.

Assume it is true for any word $w$ with $l(w)=k$. 
Take any $w\in F_n$ with $l(w)=k+1$. 
We may assume $w=w_k x_i^{\pm 1}$ with $l(w_k)=k$. 
If $w=w_k x_i$, then one has 
\[
\begin{split}
\sum_{j=1}^n \frac{\partial w}{\partial x_j}(x_j-1)
&=\sum_{j=1}^n \frac{\partial (w_k x_i)}{\partial x_j}(x_j-1)\\
&=\sum_{j=1}^n 
\left(\frac{\partial w_k}{\partial x_j}+w_k\delta_{i,j}\right)(x_j-1)\\
&=\sum_{j=1}^n 
\frac{\partial w_k}{\partial x_j}(x_j-1) 
+w_k (x_i-1).
\end{split}
\]
By the assumption on the induction, 
\[
\sum_{j=1}^n 
\frac{\partial w_k}{\partial x_j}(x_j-1) =w_k-1.
\]
Hence we obtain 
\[
\begin{split}
\sum_{j=1}^n \frac{\partial w}{\partial x_j}(x_j-1)
&=
\sum_{j=1}^n 
\frac{\partial w_k}{\partial x_j}(x_j-1) 
+w_k( x_i-1)\\
&=w_k-1+w_k(x_i-1)\\
&=w_k x_i-1\\
&=w-1.
\end{split}
\]

Similarly it can be proved for the case of $w=w_k x_i^{-1}$. 

Further it can be done for any $w\in\Z F_n$ 
by using the linearity of free differentials.

This completes the proof.
\end{proof}
\section{Alexander polynomials}

In this section we apply the Fox's free differentials to get a knot invariant as follows. 
We put \cite{Burde-Zieschang-Heusener, Rolfsen} for terminologies and definitions of a knot theory as references. 

\subsection{definition}

Let $K\subset S^3$ a knot in $S^{3}$ and $G(K)=\pi_{1}(S^{3}-K)$ the knot group of $K$. 
We take and fix a presentation of $G(K)$ as 
\[
G(K)=\langle x_1,\ldots,x_n\ |\ r_1,\ldots,r_{n-d}\rangle. 
\]
Now we do not assume it is a Wirtinger presentation. 

For simplicity we explain first how to define the invariant 
for the case of $d=1$. 
Here the number $d$ is called the deficiency of a finite presented group, 
which is defined by 
the number of generators minus the number of relators.

Let us take a presentation of deficiency one as 
\[
G(K)=\langle x_1,\ldots,x_n\ |\ r_1,\ldots,r_{n-1}\rangle. 
\]
By using the above fixed presentation, 
an epimorphism 
\[
F_n\ni x_i\mapsto x_i\in G(K)\] 
is naturally defined. 
Further we consider a ring homomorphism 
\[
\Z F_n\to\Z G(K)
\]
induced from this epimorphism $F_n\to G(K)$.

The abelianization of $G(K)$ is given as 
\[
\alpha:G(K)\rightarrow G(K)/[G(K),G(K)]
\cong\Z=\langle t \rangle
\]
and the induced map on group rings as 
\[
\alpha_*:\Z G(K)\to \Z\langle t\rangle=\Z[t,t^{-1}].
\]

\begin{definition}
The $(n-1)\times n$-matrix $A$ defined by 
\[
A=\left(\alpha_*\left(\frac{\partial r_i}{\partial x_j}\right)\right)
\in M\left((n-1)\times n;\Z[t,t^{-1}]\right)
\ 
(1\leq i\leq n-1,\ 1\leq j\leq n)
\]
is called the Alexander matrix of $G(K)=\langle x_1,\ldots,x_n\ |\ r_1,\ldots,r_{n-1}\rangle$. 

\end{definition}

Let 
$A_k$ be the $(n-1)\times(n-1)$-matrix 
obtained by removing the $k$-th column from $A$.

\begin{lemma}
There exists an integer $k\in\{1,\cdots,n\}$ such that $\alpha_*(x_k)-1\neq 0\in \Z[t,t^{-1}]$.
\end{lemma}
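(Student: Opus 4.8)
The plan is to argue by contradiction, reducing the statement to the fact that the abelianization $\alpha$ is surjective onto the nontrivial group $\Z$. First I would observe that for each generator $x_k$ the image $\alpha(x_k)\in\langle t\rangle\cong\Z$ is a power of $t$, say $\alpha(x_k)=t^{a_k}$ with $a_k\in\Z$, so that $\alpha_*(x_k)-1=t^{a_k}-1$ as an element of $\Z[t,t^{-1}]$. The key elementary remark is that $t^{a_k}-1=0$ in the Laurent polynomial ring precisely when $a_k=0$, i.e.\ precisely when $\alpha(x_k)$ is the identity element of $\Z$. This translation rests on the identification of $\Z\langle t\rangle$ with $\Z[t,t^{-1}]$ recorded in the example above.

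Granting this translation, the lemma asserts that at least one generator $x_k$ has nontrivial image under $\alpha$. Suppose instead that $\alpha(x_k)=1$ for every $k\in\{1,\ldots,n\}$. Since $\{x_1,\ldots,x_n\}$ generates $G(K)$ and $\alpha$ is a homomorphism, every element of $G(K)$ would then lie in the kernel of $\alpha$, so $\alpha$ would be the trivial map. This contradicts the fact that $\alpha:G(K)\to\Z=\langle t\rangle$ is an epimorphism onto the infinite, hence nontrivial, group $\Z$. Therefore some $a_k\neq 0$, and for that $k$ we get $\alpha_*(x_k)-1=t^{a_k}-1\neq 0$, as required.

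I expect no genuine obstacle here; the statement is essentially a restatement of the nontriviality of the abelianization. The only point worth isolating is the equivalence between the algebraic condition $\alpha_*(x_k)-1\neq 0$ in $\Z[t,t^{-1}]$ and the group-theoretic condition $\alpha(x_k)\neq 1$, after which everything follows from surjectivity of $\alpha$, which for a knot group is the standard fact $H_1(S^3-K)\cong\Z$. I would avoid invoking any special feature of Wirtinger presentations, since the presentation here is not assumed to be of that form; surjectivity onto $\Z$ together with the generating property of $\{x_1,\ldots,x_n\}$ is all that is needed.
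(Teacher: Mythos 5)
Your argument is correct and is essentially the paper's own proof: both proceed by contradiction, noting that if every generator mapped to the identity then $\alpha$ would be trivial, contradicting surjectivity onto $\Z$. The only difference is that you spell out the (routine) translation between $\alpha_*(x_k)-1\neq 0$ in $\Z[t,t^{-1}]$ and $\alpha(x_k)\neq 1$ in $\langle t\rangle$, which the paper leaves implicit.
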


\begin{proof}
If $\alpha(x_k)=1$ for any $k$, 
then clearly $\alpha:G(K)\rightarrow \Z$ is the trivial homomorphism, not an epimorphism. 
It contradicts that $\alpha$ is an epimorphism. 
\end{proof}

\begin{lemma}
For any $k,l\in\{1,\cdots,n\}$, 
\[
\left(\alpha_{\ast}\left(x_l\right)-1\right)
\det  A_k
=\pm 
\left(\alpha_{\ast}\left(x_k\right)-1\right)
\det  A_l.
\]
\end{lemma}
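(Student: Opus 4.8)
The plan is to extract a single linear dependence among the columns of $A$ from the fundamental formula of free differentials, and then to feed that relation into the multilinear, alternating nature of the determinant. Write $v_j=\alpha_*(x_j)-1\in\Z[t,t^{-1}]$ and let $c_1,\ldots,c_n$ denote the columns of $A$. First I would apply the fundamental formula to each relator $r_i$, obtaining in $\Z F_n$ the identity $r_i-1=\sum_{j=1}^n \frac{\partial r_i}{\partial x_j}(x_j-1)$. Pushing this forward along the ring homomorphism $\Z F_n\to\Z G(K)$ kills the left-hand side, since $r_i=1$ in $G(K)$; applying $\alpha_*$ then yields $\sum_{j=1}^n \alpha_*\!\left(\frac{\partial r_i}{\partial x_j}\right)v_j=0$ for every $i$. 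In matrix form this reads $A\mathbf v=0$, i.e. $\sum_{j=1}^n v_j c_j=0$: the vector $\mathbf v=(v_1,\ldots,v_n)^{T}$ lies in the kernel of $A$, so its columns satisfy one explicit dependence relation.

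With this relation in hand, the rest is linear algebra over the commutative ring $\Z[t,t^{-1}]$. The identity for $k=l$ is trivial, so fix $k\neq l$. Since $c_l$ is one of the columns of $A_k$, multilinearity lets me scale it by $v_l$, giving $v_l\det A_k=\det(\ldots,v_lc_l,\ldots)$. Now I would substitute $v_lc_l=-\sum_{j\neq l}v_jc_j$ from the dependence relation and expand by multilinearity. In each resulting determinant the slot formerly holding $c_l$ is occupied by some $c_j$ with $j\neq l$, while the remaining columns are unchanged from $A_k$; whenever $j\neq k$ the column $c_j$ already appears elsewhere in that matrix, so the term vanishes by the alternating property. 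Only the $j=k$ term survives, and it produces a matrix whose column set is exactly $\{c_m : m\neq l\}$, i.e. the columns of $A_l$ up to reordering.

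Thus $v_l\det A_k=\pm v_k\det A_l$, which is the asserted formula. I expect the only delicate point to be bookkeeping for the final sign: the surviving matrix carries the columns of $A_l$ but in the order inherited from $A_k$, with $c_k$ sitting in the slot vacated by $c_l$, so it differs from $A_l$ by a single column permutation whose sign I would compute explicitly if an exact sign were demanded. Since the statement only claims equality up to $\pm$, this permutation sign, together with the overall minus coming from the substitution, is precisely the ambiguity absorbed into the $\pm$; in that sense the genuine content of the proof is the passage from the relators to $A\mathbf v=0$, and the determinant manipulation is routine.
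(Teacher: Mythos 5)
Your proposal is correct and follows essentially the same route as the paper: both derive the column relation $\sum_j(\alpha_*(x_j)-1)c_j=0$ from the fundamental formula applied to each relator, scale the appropriate column of $A_k$ by $\alpha_*(x_l)-1$, substitute, and observe that all terms but one die by the alternating property, leaving $\pm(\alpha_*(x_k)-1)\det A_l$. The sign bookkeeping you defer is exactly the $\pm$ the paper also absorbs.
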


\begin{proof}
We may assume $k=1,l=2$ without the loss of generality.

For any relator $r_i=1\in \Z G(K)$, 
by applying the fundamental formula and projection on $\Z G(K)$, 
it is seen that 
\[
0=r_i-1=\sum_{j=1}^n \frac{\partial r_i}{\partial x_j}(x_j-1).
\]
By applying $\alpha_*$ to both sides, we obtain
\[
\sum_{j=1}^n \alpha_*\left(\frac{\partial r_i}{\partial x_j}\right)(\alpha_*(x_j)-1)=0.
\]
Hence one obtains  
\[
(\alpha_*(x_1)-1)\alpha_*\left(\frac{\partial r_i}{\partial x_1}\right)
=
-\sum_{j=2}^n \alpha_*\left(\frac{\partial r_i}{\partial x_j}\right)(\alpha_*(x_j)-1).
\]

Here let 
$A_2$ be the matrix removed the second column from $A$ and 
$\tilde{A}_2$ the one replaced the first column $\alpha_*\left(\frac{\partial r_i}{\partial x_1}\right)$ 
to $(\alpha_*(x_1)-1)\alpha_*\left(\frac{\partial r_i}{\partial x_1}\right)$ in $A_2$. 

Take the determinant 
\[
\begin{split}
\det \tilde{A}_2
&=\begin{vmatrix}
\left(\alpha_*\left(x_1\right)-1\right)
\alpha_*\left(\frac{\partial r_1}{\partial x_1}\right)&
\alpha_*\left(\frac{\partial r_1}{\partial x_3}\right)&\dots&
\alpha_*\left(\frac{\partial r_1}{\partial x_{n}}\right)\\
\vdots&\vdots&\dots&\vdots\\
\left(\alpha_*\left(x_1\right)-1\right)
\alpha_*\left(\frac{\partial r_{n-1}}{\partial x_1}\right)& 
\alpha_*\left(\frac{\partial r_{n-1}}{\partial x_3}\right)&\dots&
\alpha_*\left(\frac{\partial r_{n-1}}{\partial x_{n}}\right)\end{vmatrix}\\
&=
\left(\alpha_*\left(x_1\right)-1\right)
\det  A_2.
\end{split}
\]

On the other hand, 
replace 
$\left(\alpha_*\left(x_1\right)-1\right)
\alpha_*\left(\frac{\partial r_i}{\partial x_1}\right)$ 
to $\displaystyle-\sum_{j=2}^n \alpha_*\left(\frac{\partial r_i}{\partial x_j}\right)
(\alpha_*(x_j)-1)$, 
the same determinant is given by 
\[
\begin{split}
\det \tilde{A}_2
&=
\begin{vmatrix}
-\displaystyle\sum_{j=2}^n \alpha_*\left(\frac{\partial r_1}{\partial x_j}\right)(\alpha_*(x_j)-1)&\dots&
\alpha_*\left(\frac{\partial r_1}{\partial x_{n}}\right)\\
\vdots&\hdotsfor{1}&\vdots\\
-\displaystyle\sum_{j=2}^n \alpha_*\left(\frac{\partial r_{n-1}}{\partial x_j}\right)(\alpha_*(x_j)-1)&\dots&
\alpha_*\left(\frac{\partial r_{n-1}}{\partial x_{n}}\right)\\
\end{vmatrix}\\
&=
-\sum_{j=2}^n
(\alpha_*(x_j)-1)
\begin{vmatrix}
\alpha_*\left(\frac{\partial r_1}{\partial x_j}\right)&
\alpha_*\left(\frac{\partial r_1}{\partial x_3}\right)&\dots&
\alpha_*\left(\frac{\partial r_1}{\partial x_{n}}\right)\\
\vdots&\hdotsfor{2}&\vdots\\
\alpha_*\left(\frac{\partial r_{n-1}}{\partial x_j}\right)&
\alpha_*\left(\frac{\partial r_{n-1}}{\partial x_3}\right)&\dots&
\alpha_*\left(\frac{\partial r_{n-1}}{\partial x_{n}}\right)\\
\end{vmatrix}
\end{split}
\]

\[
\begin{split}
&=
-(\alpha_*(x_2)-1)
\begin{vmatrix}
\alpha_*\left(\frac{\partial r_1}{\partial x_2}\right)&
\alpha_*\left(\frac{\partial r_1}{\partial x_3}\right)&\dots&
\alpha_*\left(\frac{\partial r_1}{\partial x_{n}}\right)\\
\vdots&\hdotsfor{2}&\vdots\\
\alpha_*\left(\frac{\partial r_{n-1}}{\partial x_2}\right)&
\alpha_*\left(\frac{\partial r_{n-1}}{\partial x_3}\right)&\dots&
\alpha_*\left(\frac{\partial r_{n-1}}{\partial x_{n}}\right)\\
\end{vmatrix}\\
&=-(\alpha_*(x_2)-1)\det  A_1 .
\end{split}
\]
Therefore it holds that 
\[
\left(\alpha_*\left(x_1\right)-1\right)
\det  A_2
=-(\alpha_*(x_2)-1)\det  A_1 .
\]

\end{proof}

From these two lemmas, we can consider 
\[
\frac
{\det  A_k}
{\alpha_*(x_k)-1}
\]
as an invariant of $G(K)$ with a presentation with deficiency one. 

Now we supposed that the deficiency of a presentation is one. 
To prove this invariant is independent 
of choices of a presentation, 
up to $\pm t^s$ ($s\in\Z$), 
we define it for the case of higher deficiencies 
and apply the Tietze transformations to them. 

We take and fix a presentation of $G(K)$ as 
\[
G(K)=\langle x_1,\ldots,x_{n}\ |\ r_1,\ldots,r_{n-d}\rangle
\]
where $1\leq d\leq n-1$. 

The Alexander matrix associated to the above presentation, 
it is similarly defined by 
\[
A=\left(\alpha_*\left(\frac{\partial r_i}{\partial x_j}\right)\right)
\in M\left((n-d)\times n;\Z[t,t^{-1}]\right)\ 
(1\leq i\leq n-d,\ 1\leq j\leq n).
\]

Let 
$A_k$ be the $(n-d)\times(n-1)$-matrix 
obtained by removing the $k$-th column from $A$. 
This is not a square matrix if $d\geq 2$. 

Let $A_k^{I}$ be the $(n-d)\times(n-d)$-matrix 
consisting of the columns whose indices belong to 
$I=(i_{1},\dots,i_{n-d})\ (1\leq i_{1}<\cdots< i_{n-d}\leq n)$.

By the similar arguments for the deficiency one case, 
we can also prove the following lemma. 

\begin{lemma}
For any $k,l\in\{1,\cdots,n\}$ 
and any choice of $I$ such that $k,l\notin I$,
\[
\left(\alpha_*\left(x_l\right)-1\right)
\det  A_k^{I}
=\pm 
\left(\alpha_*\left(x_k\right)-1\right)
\det  A_l^{I}.
\]
\end{lemma}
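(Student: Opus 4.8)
The plan is to imitate the deficiency-one computation almost verbatim, the only difference being that we now carry along a fixed index set $I$ of columns and work with maximal square minors rather than with the single determinant $\det A_k$. Without loss of generality I would fix $k,l$ and an admissible $I$ with $k,l\notin I$; by relabelling the generators I may assume the columns are arranged so that column $k$ and column $l$ sit in the two distinguished positions and the columns indexed by $I$ occupy the remaining $n-d$ slots. As before, the starting point is that each relator $r_i$ maps to $1$ in $\Z G(K)$, so the fundamental formula of free differentials together with the projection $\Z F_n\to\Z G(K)$ and the application of $\alpha_*$ yields, for every $i$,
\[
\sum_{j=1}^n \alpha_*\!\left(\frac{\partial r_i}{\partial x_j}\right)(\alpha_*(x_j)-1)=0 .
\]
This is the one algebraic relation among the columns of $A$ that drives the whole argument.

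Next I would isolate the $k$-th term to obtain
\[
(\alpha_*(x_k)-1)\,\alpha_*\!\left(\frac{\partial r_i}{\partial x_k}\right)
=-\sum_{j\neq k}\alpha_*\!\left(\frac{\partial r_i}{\partial x_j}\right)(\alpha_*(x_j)-1),
\]
and then form the auxiliary $(n-d)\times(n-d)$ matrix $\tilde A$ whose columns are the $n-d$ columns indexed by $I$ together with one extra distinguished column, into which I insert the vector $(\alpha_*(x_k)-1)\,\alpha_*(\partial r_i/\partial x_k)$. Computing $\det\tilde A$ one way, factoring the scalar $(\alpha_*(x_k)-1)$ out of that column, gives
\[
\det\tilde A=(\alpha_*(x_k)-1)\,\det A_k^{I}.
\]
Computing it the other way, by substituting the right-hand side of the displayed relation into that column and expanding the determinant by linearity in that column, produces a sum $-\sum_{j\neq k}(\alpha_*(x_j)-1)\det(\cdots)$ of determinants. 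Every term in which $j\in I$ has a repeated column and so vanishes; the term $j=k$ does not occur; and for $j=l$ the surviving minor is precisely $A_l^{I}$ up to the column permutation that restores the natural order, contributing a sign $\pm 1$. Hence the other evaluation is $\mp(\alpha_*(x_l)-1)\det A_l^{I}$, and equating the two expressions for $\det\tilde A$ gives the claimed identity.

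The step I expect to require the most care is the bookkeeping of vanishing and surviving terms in the expansion by linearity, together with the resulting sign. In the deficiency-one case every index $j\ge 2$ other than the distinguished one automatically landed in the remaining columns, so only $j=2$ survived with no permutation subtlety; here one must check that exactly the columns indexed by $I$ force a repeated column (and hence a zero determinant), leaving only $j=l$, and then track the sign of the permutation that reorders $\{l\}\cup I$ into increasing order to match $A_l^{I}$. Since the statement is only asserted up to $\pm 1$, this sign need not be pinned down explicitly, which is exactly why the lemma is phrased with $\pm$; the essential content is that all but one term drop out. Everything else is a routine transcription of the $d=1$ argument, and the linearity of the Fox derivatives again handles the passage from relators in $F_n$ to their images in $\Z G(K)$.
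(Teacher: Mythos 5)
The step you yourself single out as ``requiring the most care'' is exactly where the argument breaks down, and the paper gives no help here: for this lemma it writes only that it follows ``by the similar arguments for the deficiency one case'' and supplies no proof, so the real question is whether the $d=1$ computation transcribes verbatim. It does not. When you substitute $(\alpha_*(x_k)-1)\,\alpha_*(\partial r_i/\partial x_k)=-\sum_{j\neq k}\alpha_*(\partial r_i/\partial x_j)(\alpha_*(x_j)-1)$ into the distinguished column and expand by multilinearity, the terms that die are precisely those whose index $j$ coincides with a column already present in the auxiliary matrix. In the deficiency-one case those indices exhaust all of $\{1,\dots,n\}\setminus\{k,l\}$, so the single term $j=l$ survives and the two-term identity falls out. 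For $d\geq 2$ a maximal square minor omits $d$ of the $n$ columns, so after the substitution there remain $d$ surviving indices $j\notin I\cup\{k\}$, only one of which is $l$; the other $d-1$ contribute $(\alpha_*(x_j)-1)$ times further maximal minors that have no reason to vanish. What one actually obtains is a relation among $d+1$ minors, not the asserted two-term identity, and no bookkeeping of column permutations or signs removes the extra terms. (There is also a dimension slip that is a symptom of the same issue: the ``$n-d$ columns indexed by $I$ together with one extra distinguished column'' form an $(n-d)\times(n-d+1)$ array, not a square matrix, and under the paper's literal definition neither column $k$ nor column $l$ even appears in $A_k^{I}$, so there is no column into which the relation can be inserted.)

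The repair is to use the column relation for all index sets simultaneously rather than one $I$ at a time. From $\sum_{j}(\alpha_*(x_j)-1)C_j=0$ one gets, for each $(n-d-1)$-element set $J'$, the multi-term identity $\sum_{j\notin J'}(\alpha_*(x_j)-1)\det A^{J'\cup\{j\}}=0$, and from the totality of these one deduces divisibility relations between $(\alpha_*(x_k)-1)Q_l$ and $(\alpha_*(x_l)-1)Q_k$, where $Q_k$ is the greatest common divisor of the minors $\det A_k^{I}$ over all admissible $I$. This is exactly why the invariant in the text is built from the gcd $Q_k$ rather than from a single minor: the clean two-term statement for one fixed $I$ is special to deficiency one, and your proposal, which commits to a single $I$ throughout, cannot reach the stated conclusion.
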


Furthermore 
it is similarly seen 
that 
there exists an integer $k\in\{1,\cdots,n\}$ such that $\alpha_*(x_k)-1\neq 0\in \Z[t,t^{-1}]$.
Now 
we put 
$Q_{k}$ to the greatest common divisior 
of $\det A_{k}^{I}$ for all indecies $I$. 
From the above, we can consider 
\[
\frac
{Q_k}
{\alpha_*(x_k)-1}
\]
as an invariant of $G(K)$. 

\begin{remark}
For the case of $d=1$, 
we can choice the index set $I$ as $I=(1,\dots,k-1,k+1,\dots,n)$. 
Hence the above definition gives the same one 
in the case of deficiency one presentations. 
\end{remark}

Now we recall Tietze transformations as follows. 
See \cite{Magnus-Karrass-Solitar} for example. 
\begin{theorem}[Tietze]
Any presentation 
$G=\langle x_{1},\cdots,x_{k}\ |\ r_{1},\cdots,r_{l}\rangle$ can be transformed to any other presentation of $G$ 
by an application of a finite sequence of the following two type operations and their inverses:
\noindent
\flushleft{(I)}
To add a consequence $r$ of the relators 
$r_{1},\cdots,r_{l}$ 
to the set of relators. 
The resulting presentation is given by 
$\langle x_{1},\cdots,x_{k}\ |\ r_{1},\cdots,r_{l},r\rangle$. 
\flushleft{(II)}
To add a new generator $x$ and a new relator $xw^{-1}$ 
where $w$ is any word in $x_{1},\cdots,x_{k}$. 
The resulting presentation is given by 
$\langle x_{1},\cdots,x_{k},x\ |\ r_{1},\cdots,r_{l},xw^{-1}\rangle$.
\end{theorem}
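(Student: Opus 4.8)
The plan is to show that any two finite presentations of the same group $G$ are connected by a finite sequence of moves of types (I) and (II) and their inverses. Let
\[
P=\langle x_{1},\dots,x_{k}\ |\ r_{1},\dots,r_{l}\rangle,
\qquad
P'=\langle y_{1},\dots,y_{m}\ |\ s_{1},\dots,s_{p}\rangle
\]
be two such presentations, with a fixed identification of the groups they present with $G$. The first thing I would record is that, through this identification, each generator $y_{a}$ is represented by some word $w_{a}=w_{a}(x_{1},\dots,x_{k})$, and symmetrically each $x_{j}$ is represented by some word $v_{j}=v_{j}(y_{1},\dots,y_{m})$. The strategy is to build a single large combined presentation on the generators $x_{1},\dots,x_{k},y_{1},\dots,y_{m}$ that is reachable from $P$ by legal moves and also from $P'$ by legal moves; since every move is reversible, this links $P$ to $P'$.

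Concretely, starting from $P$ I would apply move (II) a total of $m$ times to adjoin the new generators $y_{1},\dots,y_{m}$ together with their defining relators $y_{a}w_{a}^{-1}$, obtaining
\[
\langle x_{1},\dots,x_{k},y_{1},\dots,y_{m}\ |\ r_{1},\dots,r_{l},\ y_{1}w_{1}^{-1},\dots,y_{m}w_{m}^{-1}\rangle ,
\]
which still presents $G$. Next, using move (I), I would adjoin the words $s_{1},\dots,s_{p}$ (rewritten in the $y$'s) and the words $x_{j}v_{j}^{-1}$ one at a time. This is where the real content lies: each such word must be verified to be a consequence of the relators already present, i.e.\ to lie in the normal closure of the current relator set in the free group on all generators. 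Since the combined presentation presents $G$ and each added word maps to the identity of $G$, it is indeed a consequence; making this precise is exactly the universal property of a presentation, namely that a word represents $1$ in the presented group if and only if it lies in the normal closure of the relators.

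Once the combined presentation carries all of $r_{1},\dots,r_{l}$, the $y_{a}w_{a}^{-1}$, the $s_{b}$, and the $x_{j}v_{j}^{-1}$ as relators, I would run the mirror construction from the $P'$ side and reverse it. Using the relators $x_{j}v_{j}^{-1}$ to rewrite, the original relators $r_{1},\dots,r_{l}$ and the $y_{a}w_{a}^{-1}$ become consequences of $s_{1},\dots,s_{p}$ together with the $x_{j}v_{j}^{-1}$, so they may be deleted by the inverse of move (I). Finally the inverse of move (II) removes the now-superfluous generators $x_{j}$ together with their defining relators $x_{j}v_{j}^{-1}$, leaving exactly $P'$. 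Composing the forward construction from $P$ with this reverse construction to $P'$ gives the desired finite sequence of Tietze moves.

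The main obstacle I anticipate is not the sequencing of the moves, which is symmetric and essentially forced, but the verification step in the middle: checking that every word one wishes to adjoin by move (I) really is a consequence of the relators present \emph{at that moment}. Each $s_{b}$ holds in $G$ by hypothesis, and each $x_{j}v_{j}^{-1}$ and $y_{a}w_{a}^{-1}$ holds by the choice of the substitution words, so all of them map to $1\in G$; the substantive point is to translate ``maps to $1$ in $G$'' into ``lies in the normal closure of the relators.'' Keeping careful track of which relators are available at each stage, so that no move (I) is applied before its witnessing relators have been introduced, is the only delicate bookkeeping the argument requires.
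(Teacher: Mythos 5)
The paper does not actually prove this theorem; it is stated as a classical result and the reader is referred to Magnus--Karrass--Solitar, so there is no internal proof to compare against. Your argument is the standard one from that reference and is correct: adjoin the $y_{a}$ by moves of type (II), adjoin the $s_{b}$ and the $x_{j}v_{j}^{-1}$ by moves of type (I) (each is legitimate because it maps to $1$ in $G$ and hence lies in the normal closure of the relators already present, by the universal property of a presentation), then argue symmetrically from $P'$ and reverse. You also correctly identify the one delicate point, namely that before the inverse of move (II) can delete a generator $x_{j}$, every other relator must first be replaced by a consequence not involving $x_{j}$ (a move (I) followed by an inverse move (I) using the substitution $x_{j}\mapsto v_{j}$), and that the relators witnessing each application of move (I) must already be present at that stage. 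With that bookkeeping spelled out, the proof is complete.
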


We can prove the following. 

\begin{proposition}
Up to $\pm t^s$ ($s\in\Z$), 
the rational expression 
\[
\frac{Q_k}{\alpha_\ast(x_k)-1}
\] is independent of a choice of a presentation of $G(K)$.  
Namely it is an invariant of a group $G(K)$ up to $\pm t^s$ ($s\in\Z$). 
\end{proposition}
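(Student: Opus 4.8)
The plan is to lean on the Tietze theorem just quoted: since any two finite presentations of $G(K)$ are connected by a finite sequence of the moves (I), (II) and their inverses, it suffices to show that the rational expression $Q_k/(\alpha_*(x_k)-1)$ changes at most by a unit $\pm t^s$ under a single application of (I) and of (II). Invariance under the inverse moves is then automatic, because the relation ``differs by $\pm t^s$'' is symmetric. Throughout I would use the previous two lemmas freely: the relation $(\alpha_*(x_l)-1)\det A_k^I=\pm(\alpha_*(x_k)-1)\det A_l^I$ shows the quotient is, up to sign, independent of which column $k$ is deleted, so at each step I am free to delete whichever generator is most convenient, in particular one not created by the move under consideration.

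For move (II) I would write the new Alexander matrix explicitly. Adjoining a generator $x$ and a relator $xw^{-1}$, with $w$ a word in the old generators, adds one column and one row. Since the old relators $r_i$ do not involve $x$ the new column vanishes in the old rows, and since $w$ does not involve $x$ the Fox rules give $\partial(xw^{-1})/\partial x=1$ and $\partial(xw^{-1})/\partial x_j=-xw^{-1}\,\partial w/\partial x_j$, which maps under $\alpha_*$ (using $x=w$ in $G(K)$) to $-\alpha_*(\partial w/\partial x_j)$. Hence the new matrix has the block form
\[
A'=\begin{pmatrix} A & 0 \\ \ast & 1 \end{pmatrix},
\]
and deleting a column $k\le n$ I would expand the maximal minors of $A'_k$ along the distinguished last column: the minors that use it reproduce the old $\det A_k^I$ up to sign, while the remaining minors must be shown to lie in the ideal these generate, for which the fundamental formula, in the form of the column relation $\sum_j(\text{col}_j)(\alpha_*(x_j)-1)=0$ for $A'$, is the tool that rewrites the extra column in terms of the others. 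This yields $Q_k'\doteq Q_k$ up to a unit, while the denominator $\alpha_*(x_k)-1$ is untouched.

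For move (I) I would first compute the Fox derivative of the added consequence. Writing the new relator as a product of conjugates $\prod_s w_s r_{i_s}^{\pm1}w_s^{-1}$ and using the product rule together with $\partial(g^{-1})/\partial x_j=-g^{-1}\partial g/\partial x_j$, the conjugating factors cancel after projection to $\Z G(K)$, where each $r_{i_s}=1$, leaving
\[
\alpha_*\left(\frac{\partial r}{\partial x_j}\right)=\sum_s \pm\,\alpha_*(w_s)\,\alpha_*\left(\frac{\partial r_{i_s}}{\partial x_j}\right),
\]
so that the new row of the Alexander matrix is a $\Z[t,t^{-1}]$-linear combination of the existing rows, with coefficients independent of the column index $j$. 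I would then argue that adjoining such a dependent row leaves $Q_k$ unchanged, the denominator being again untouched.

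The step I expect to be the main obstacle is precisely the bookkeeping of minors in move (I): adjoining a relator lowers the deficiency by one, so the size of the minors entering the definition of $Q_k$ grows, and a literal reading would make every new maximal minor vanish, since the adjoined row is dependent. The substantive content is therefore to show that the invariant is insensitive to this shift, i.e.\ that the correct object to track is the greatest common divisor of the top nonvanishing minors, equivalently a fixed elementary ideal of the Alexander module rather than a size tied naively to the current number of relators, and that this greatest common divisor is genuinely preserved. Making this precise, while simultaneously controlling the unit $\pm t^s$ that the reordering of Fox derivatives and the sign in the second lemma introduce, is the delicate part; the remaining work is the block-determinant expansion sketched above.
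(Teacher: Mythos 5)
Your outline follows the same route as the paper: reduce via Tietze's theorem to the two elementary moves, show that under (I) the adjoined row of the Alexander matrix is a $\Z[t,t^{-1}]$-linear combination of the old rows, and under (II) put the new matrix in the block form $\begin{pmatrix} A & 0\\ \ast & 1\end{pmatrix}$; every computation you record agrees with the one in the text. Where you differ is in being more scrupulous at the two places where the printed proof is thin, and both of your worries are legitimate. For move (II) the paper claims the bottom row is $(0,\dots,0,1)$ by replacing $w$ with $x_{n+1}$ inside the Fox derivative; that substitution is not allowed (derivatives are computed in the free group), so the row is really $\left(-\alpha_*(\partial w/\partial x_1),\dots,-\alpha_*(\partial w/\partial x_n),1\right)$ up to units, and the minors of $A''_k$ avoiding the new column need not vanish. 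Your proposed fix works, and more simply than you suggest: cofactor expansion of such a minor along the bottom row exhibits it as a $\Z[t,t^{-1}]$-combination of $(n-d)\times(n-d)$ minors of $A_k$, hence a multiple of $Q_k(A)$, while the minors that do use the new column reproduce the $\det A_k^I$ exactly; so the gcd is unchanged, and the fundamental formula is not actually needed at this step.

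The obstacle you isolate in move (I) is the real content, and the paper passes over it in silence: adjoining the dependent relator raises the number of rows by one, so the literal maximal minors of $A'$ all vanish. The repair is the one you name. Fix the size $(n-d)\times(n-d)$ coming from the original presentation and compare the ideals generated by the minors of that size: the minors of $A'$ using only the old rows are precisely the minors of $A$, and those using the new row are, by multilinearity of the determinant in that row, $\Z[t,t^{-1}]$-combinations of minors of $A$. Hence the ideal, and with it the gcd $Q_k$, is unchanged, and no ambiguity beyond $\pm t^s$ enters. With this paragraph made explicit your sketch closes into a complete proof; as written it identifies the gap rather than filling it, but it is the same gap the paper itself leaves open.
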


\begin{proof}
Take presentations as 
\[
P=\langle x_{1},\dots,x_{n}\ |\ r_{1},\dots,r_{n-d}\rangle
\]
and 
\[
P'=\langle x_{1},\dots,x_{n}\ |\ r_{1},\dots,r_{n-d},r\rangle
\]
by applying the Tietze transformation (I). 
Now assume $r$ has a form as
\[
r=\prod_{k=1}^{p}w_{k}r_{i_{k}}^{\epsilon_{k}}w_{k}^{-1}.
\]
where 
$1\leq i_{k}\leq n-d,w_{k}\in F_{n}$ and $\epsilon_{k}=\pm 1$ 
for $1\leq k\leq p$. 
By applying Fox's free differentials, 
one has
\[
\begin{split}
\frac{\partial r}{\partial x_{j}}
&=\sum_{k=1}^{p}
\left(
\prod_{l=1}^{k-1}w_{l}r_{i_{l}}^{\epsilon_{l}}w_{l}^{-1}
\right)
\left(
\frac{\partial w_{k}}{\partial x_{j}}
+u_{k}\frac{\partial r_{i_{k}}}{\partial x_{j}}
-w_{k}r_{i_{k}}^{\epsilon_{k}}w_{k}^{-1}\frac{\partial w_{k}}{\partial x_{j}}\right)\\
&=\sum_{k=1}^{p}
\left(
\prod_{l=1}^{k-1}w_{l}r_{i_{l}}^{\epsilon_{l}}w_{l}^{-1}
\right)
\left(
(1-w_{k}r_{i_{k}}^{\epsilon_{k}}w_{k}^{-1})
\frac{\partial w_{k}}{\partial x_{j}}
+u_{k}\frac{\partial r_{i_{k}}}{\partial x_{j}}
\right).
\end{split}
\]
Here 
\[
u_{k}=
\begin{cases}
&\ w_{k}\ (\epsilon_{k}=1),\\
&-w_{k}r_{i_{k}}^{-1}\ (\epsilon_{k}=-1).
\end{cases}
\]
Because $\alpha_{\ast}(r_{i})=1\in\Z[t,t^{-1}]$, 
one obtains
\[
\begin{split}
\alpha_{\ast}\left(\frac{\partial r}{\partial x_{j}}\right)
&=\sum_{k=1}^{p}
\alpha_{\ast}(u_{k})
\alpha_{\ast}\left(
\frac{\partial r_{i_{k}}}{\partial x_{j}}
\right)\\
&=\sum_{k=1}^{p}
\epsilon_{k}\alpha_{\ast}(w_{k})
\alpha_{\ast}\left(
\frac{\partial r_{i_{k}}}{\partial x_{j}}
\right).
\end{split}
\]
This shows the last row of the Alexander matrix $A'$
associated to $P'$ are linear combinations of $p$ rows of the Alexander matrix $A$ associated to $P$. 
It is clear that the first $n-d$ rows of $A'$
associated to $P'$ are exactly same with the first $n-d$ rows 
of $A$ associated to $P$. 
Therefore it is shown that 
the invariant 
$\displaystyle\frac{\det {A'}_{k}^{I}}{\alpha_{\ast}(x_{k})-1}$ is the same 
with the one computed by $A$.

Next take a presentation 
\[
P''
=\langle
x_{1},\dots,x_{n},x(=x_{n+1})\ |\ r_{1},\dots,r_{n-d},xw^{-1}
\rangle
\]
obtained from $P$ by applying the Tietze transformation (II). 
By direct computations, 
we see the Alexander matrix $A''$ associated to $P''$ has the form of 
\[
A''=
\begin{pmatrix}
A & 0\\
\ast & 1
\end{pmatrix}
\]
where 
the last row is 
\[
\begin{split}
&\left(
-\alpha_{\ast}(x)\alpha_\ast(w)\alpha_{\ast}\left(\frac{\partial w}{\partial x_{1}}\right), 
\dots,
-\alpha_{\ast}(x)\alpha_\ast(w)\alpha_{\ast}\left(\frac{\partial w}{\partial x_{n}}\right), 1
\right)\\
=&
\left(
-\alpha_{\ast}(x)\alpha_\ast(w)\alpha_{\ast}\left(\frac{\partial x_{n+1}}{\partial x_{1}}\right), 
\dots,
-\alpha_{\ast}(x)\alpha_\ast(w)\alpha_{\ast}\left(\frac{\partial x_{n+1}}{\partial x_{n}}\right), 1
\right)\\
=&\left(
0,\dots,0, 1
\right).
\end{split}
\]
Here suppose $\alpha_{\ast}(x_{k})-1\neq 0$. 
Then 
the determinant of ${A''}_{k}^{J}$ 
for an index set $J=(j_{1},\dots,j_{n-d+1})$ 
can be non-zero 
if and only if $J$ has the form 
$J = (j_{1},\dots,j_{n-d},n+1)$. 
Then for $J = (j_{1},\dots,j_{n-d},n+1)$ and $I= (j_{1},\dots,j_{n-d})$, 
it is seen 
\[
\det {A''}_{k}^{J}=\det A_{k}^{I}.
\]
Hence it holds  
\[
\frac{Q_k({A''})}{\alpha(x_{k})-1}
=\frac{Q_k(A)}{\alpha(x_{k})-1}.
\]
This completes the proof.
\end{proof}

For any knot $K$, we can take some special presentation of $G(K)$, 
which is a Wirtinger presentation derived from a regular diagram on the plane. 
In this case we may assume $\alpha(x_1)=\cdots =\alpha(x_{n})=t$. 
Hence the numerator is always $t-1$. 
Therefore the numerator itself is an invariant of $G(K)$ up to $\pm t^s$. 

\begin{definition}
This is called the Alexander polynomial $\Delta_{K}(t)=\det A_{k}$ of $K$.
\end{definition}

\begin{remark}
It is clear that Alexander polynomial is well-defined up to $\pm t^{s}$.
\end{remark}
\subsection{Examples}

\begin{example}
We consider the trefoil knot $3_1=T(2,3)$ first. 


Fix the following presentation 
\[
G(3_1)=\langle x,y\ |\ r=xyx(yxy)^{-1}\rangle. 
\]

By applying the abelianization $\alpha$, 
the relator $r=xyx(yxy)^{-1}$ goes to 
\[
\begin{split}
\alpha(r)
&=
\alpha(x)\alpha(y)\alpha(x)\alpha(y)^{-1}\alpha(x)^{-1}\alpha(y)^{-1}\\
&=\alpha(x)\alpha(y)^{-1}\in G(3_1)/[G(3_1),G(3_1)].
\end{split}
\]
Because $\alpha(r)=1$, 
then we get 
\[
\alpha(x)\alpha(y)^{-1}=1
\in G(3_1)/[G(3_1),G(3_1)]. 
\]

Hence the abelianization can be given by 
\[
\alpha:G(3_1)\ni x,y\mapsto t\in \langle t\rangle.
\]
By applying $\displaystyle\frac{\partial}{\partial x}$ to $r$ 
and mapping it on $\Z G(3_{1})$, 
we have 
\[
\begin{split}
\frac{\partial}{\partial x}(r)
&=\frac{\partial}{\partial x}(xyx(yxy)^{-1})\\
&=\frac{\partial}{\partial x}(xyx)-xyx(yxy)^{-1}\frac{\partial}{\partial x}(yxy)\\
&=\frac{\partial}{\partial x}(xyx)-r\frac{\partial}{\partial x}(yxy)\\
&=
\frac{\partial}{\partial x}\left(xyx\right)-\frac{\partial}{\partial x}\left(yxy\right)\\
&=
\frac{\partial}{\partial x}\left(xyx-yxy\right).
\end{split}
\]
Here we used the property $r=1$ in $\Z G(3_1)$. 
Therefore we can compute free differentials for $xyx-yxy$
instead of $r=xyx(yxy)^{-1}$. 

Accordingly we compute 
\[
\begin{split}
\frac{\partial}{\partial x}(xyx-yxy)
&=\frac{\partial}{\partial x}(xyx)-\frac{\partial}{\partial x}(yxy)\\
&=1+xy-y\\
&\overset{\alpha_*}{\mapsto}
t^2-t+1\in\Z[t,t^{-1}].
\end{split}
\]

Similarly
\[
\begin{split}
\frac{\partial}{\partial y}(xyx-yxy)
&=\frac{\partial}{\partial y}(xyx)-\frac{\partial}{\partial y}{yxy}\\
&=x-1-yx\\
&\overset{\alpha_*}{\mapsto}
-(t^2-t+1)\in\Z[t,t^{-1}].
\end{split}
\]
Hence one has
\[
A=\begin{pmatrix}
(t^2-t+1) & -(t^2-t+1)
\end{pmatrix},
\]
and 
\[
\begin{split}
\frac{\text{det} A_2}{t-1}
&=-\frac{\text{det} A_1}{t-1}\\
&=\frac{t^2-t+1}{t-1}.
\end{split}
\]

By changing this presentation to 
$\langle x,y,z\ |\ xyx(yxy)^{-1}, xyz^{-1}\rangle$, 
then Alexander matrix is changed to 
\[
A=\begin{pmatrix}
(t^2-t+1) & -(t^2-t+1) & 0\\
1 & t & -1
\end{pmatrix}.
\]
In this case the abelianization $\alpha$ is given by 
\[
\alpha(x)=\alpha(y)=t, \alpha(z)=t^{2}. 
\]
From this Alexander matrix, 
we obtain   
\[
\begin{split}
\frac{\text{det} A_1}{t-1}&=\frac{t^2-t+1}{t-1}, \\
\frac{\text{det} A_2}{t-1}&=-\frac{t^2-t+1}{t-1}, \\
\frac{\text{det} A_3}{t^2-1}&=\frac{t(t^2-t+1)+(t^2-t+1)}{t^2-1}=\frac{t^2-t+1}{t-1}.
\end{split}
\]
Therefore the Alexander polynomial of the trefoil knot 
is given by 
\[
\Delta_{3_{1}}(t)=t^{2}-t+1\]
up to $\pm t^{s}$.
\end{example}

\begin{example}{Figure-eight knot $4_1$}


Take a presentation of $G(4_1)$ as 
\[
G(4_{1})=
\langle
x,y\ |\ wxw^{-1}=y\rangle
\]
where $w=x^{-1}yxy^{-1}$.

Using this presentation, 
the abelianization 
$\alpha:G(4_1)\rightarrow \langle t \rangle$ 
is given by $\alpha(x)=\alpha(y)=t$. 

Then one has 
\[
\begin{split}
\frac{\partial}{\partial x}(wxw^{-1}y^{-1})
&=\frac{\partial w}{\partial x}+w\frac{\partial x}{\partial x}-wxw^{-1}\frac{\partial w}{\partial x}\\
&=(1-y)\frac{\partial w}{\partial x}+w\\
&\overset{\alpha_*}{\mapsto}
(1-t)\alpha_\ast\left(\frac{\partial w}{\partial x}\right)+1
\end{split}
\]
and 
\[
\begin{split}
\alpha_*\left(\frac{\partial w}{\partial x}\right)
&=\alpha_*\left(\frac{\partial}{\partial x}(x^{-1}yxy^{-1})\right)\\
&=\alpha_*(-x^{-1}+x^{-1}y)\\
&=-t^{-1}+1.
\end{split}
\]

Consequently it is seen that 
\[
\begin{split}
\alpha_*\left(\frac{\partial}{\partial x}(wxw^{-1}y^{-1})\right)
&=(1-t)(-t^{-1}+1)+1\\
&=-t^{-1}+1+1-t(-t^{-1}+1)\\
&=-t^{-1}+1+1+1-t\\
&=-t^{-1}+3-t.
\end{split}
\]

Similarly one has
\[
\begin{split}
\alpha_*\left(\frac{\partial}{\partial y}(wxw^{-1}y^{-1})\right)
&=\alpha_*\left((1-y)\frac{\partial w}{\partial x}-1\right)\\
&=(1-t)(t^{-1}-1)-1\\
&=t^{-1}-3+t.\\
\end{split}
\]
Hence we obtain  
\[A=\begin{pmatrix}
-t^{-1}+3-t & t^{-1}-3+t
\end{pmatrix}
\]
and 
\[
\begin{split}
\frac{\text{det} A_1}{\alpha_*(x_1)-1}
&=\frac{t^{-1}-3+t}{t-1}\\
&=-\frac{1}{t}\frac{(-t^2+3t-1)}{t-1},
\end{split}
\]

\[
\begin{split}
\frac{\text{det} A_2}{\alpha_*(x_2)-1}
&=-\frac{t^{-1}-3+t}{t-1}\\
&=\frac{1}{t}\frac{(-t^2+3t-1)}{t-1}.
\end{split}
\]
Finally, the Alexander polynomial of the figure-eight knot is given by  
\[
\Delta_{4_{1}}(t)=-t^2+3t-1
\]
up to $\pm t^{s}$.
\end{example}

\section{Reidemeister torsion}

In this section we explain the theory of the Reidemeister torsion, 
which is an invariant 
of a compact CW-complex with a linear representation of the fundamental group. 

Let $K$ be a knot in $S^3$ 
and $G(K)$ the knot group of $K$. 
We take an open tubular neighborhood $N(K)\subset S^3$ of $K$ 
and the exterior $E(K)=S^3\setminus N(K)$ of $K$. 
The knot exterior $E(K)$ is a compact 3-manifold with a torus boundary. 
Note that $\pi_{1}(E(K))$ is isomorphic to $G(K)$ by natural inclusion $E(K)\rightarrow S^{3}\setminus K$. 

Here we consider the abelianization 
$\alpha:G(K)\rightarrow T=\langle t\rangle\subset \mathit{GL}(1;\Q(t))$ 
as an 1-dimensional representation over $\Q(t)$.
Here $\Q(t)$ denotes the one variable rational function field over $\Q$. 
Now we can define Reidemeister torsion 
\[
\tau_\alpha(E(K))\in\Q(t)
\] 
of $E(K)$ for $\alpha$. 
We mention the following well-known theorem by Milnor\cite{Milnor62} 
before giving the definition of Reidemeister torsion. 

\begin{theorem}[Milnor]
\[
\frac{\Delta_{K}(t)}{t-1}=\tau_{\alpha}(E(K)). 
\]
\end{theorem}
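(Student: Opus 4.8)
The plan is to compute $\tau_\alpha(E(K))$ from a cellular chain complex and match it with the Fox-derivative description of $\Delta_K(t)$ already obtained in the previous section. First I would replace $E(K)$ by a convenient CW model. Taking the deficiency-one Wirtinger presentation $G(K)=\langle x_1,\dots,x_n\mid r_1,\dots,r_{n-1}\rangle$, one builds the standard two-complex $W$ with a single $0$-cell, one $1$-cell for each generator $x_j$, and one $2$-cell for each relator $r_i$. Since $E(K)$ collapses onto a two-dimensional spine carrying the Wirtinger presentation, $W$ is simple homotopy equivalent to $E(K)$; as Reidemeister torsion is a simple homotopy invariant, it suffices to prove $\Delta_K(t)/(t-1)=\tau_\alpha(W)$.

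Next I would write down the twisted cellular chain complex. Lifting the cells to the universal cover $\widetilde W$ gives a complex of free $\Z G(K)$-modules
\[
0\to \Z G(K)^{\,n-1}\xrightarrow{\ \partial_2\ }\Z G(K)^{\,n}\xrightarrow{\ \partial_1\ }\Z G(K)\to 0,
\]
and tensoring through $\alpha_*:\Z G(K)\to\Q(t)$, where $\alpha$ is viewed as the $1$-dimensional representation $t\in\GL(1;\Q(t))$, produces a complex $C_*$ of $\Q(t)$-vector spaces. The essential computation is the identification of the boundary maps with Fox calculus: the cellular boundary in $\widetilde W$ is governed by free differentials, so after applying $\alpha_*$ one has $\partial_1=\bigl(\alpha_*(x_1)-1,\dots,\alpha_*(x_n)-1\bigr)^{\top}$ and $\partial_2$ equal to the transpose of the Alexander matrix $A=\bigl(\alpha_*(\partial r_i/\partial x_j)\bigr)$. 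For the Wirtinger presentation $\alpha_*(x_j)=t$ for every $j$, so every entry of $\partial_1$ equals $t-1$.

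I would then verify acyclicity and evaluate the torsion. Because $\alpha$ is onto $\langle t\rangle$, some $\alpha_*(x_k)-1=t-1\neq 0$, so $\partial_1$ is surjective and $H_0(C_*)=0$; the two lemmas of the previous section, together with a rank count, give $H_1(C_*)=H_2(C_*)=0$, whence $C_*$ is acyclic and $\tau_\alpha(W)$ is defined. To compute it, fix $k$ with $\alpha_*(x_k)-1\neq0$ and split off the basis vector $e_k$, which maps under $\partial_1$ to a generator of $C_0$, together with the image of $\partial_2$; the resulting change-of-basis determinant is, up to a unit $\pm t^s$,
\[
\tau_\alpha(W)=\frac{\det A_k}{\alpha_*(x_k)-1}=\frac{\det A_k}{t-1}.
\]
By the definition of the Alexander polynomial $\Delta_K(t)=\det A_k$, this is exactly $\Delta_K(t)/(t-1)$, proving the theorem.

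The main obstacle is not the final determinant manipulation, which is the same as in the two lemmas already proved, but rather the two structural inputs that make it applicable. First, the reduction to $W$ genuinely needs simple homotopy equivalence, since torsion is only a simple homotopy invariant, not a homotopy invariant; I would justify this via the standard collapse of $E(K)$ onto a Wirtinger spine. Second, and most delicately, one must identify the equivariant cellular boundary operators with the Fox-derivative matrices, keeping careful track of signs, the transpose convention, and the normalization of the torsion, so that the unit ambiguity lands exactly at $\pm t^s$.
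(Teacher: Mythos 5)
The paper offers no proof of this statement to compare against: it records the theorem as known and cites \cite{Milnor62}, so your argument has to be judged on its own. What you propose is the standard presentation-complex proof, and it is essentially sound: replace $E(K)$ by the two-complex $W$ of a deficiency-one Wirtinger presentation (a spine of $E(K)$, so the passage preserves torsion as a simple homotopy equivalence), identify the equivariant boundary maps of $0\to \Q(t)^{n-1}\to\Q(t)^{n}\to\Q(t)\to 0$ with the Fox-derivative matrix $A$ and with the column of entries $\alpha_*(x_j)-1=t-1$, and evaluate the torsion with the bases $\mathbf{b}_1=\partial_2(\mathbf{c}_2)$ and $\mathbf{b}_0=\partial_1(e_k)$, which gives $\pm\det A_k/(t-1)$. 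This is in fact the same route the paper later takes for the twisted generalization attributed to \cite{Kitano96}. It is worth noting that Milnor's cited argument is different in flavor: he computes $\tau_\alpha(E(K))$ as an alternating product of orders of the $\Q[t,t^{-1}]$-modules $H_*(E(K)_{\infty};\Q)$ and then uses $\mathrm{ord}(H_1)=(\Delta_K(t))$ and $\mathrm{ord}(H_0)=(t-1)$, which is precisely the Proposition recorded in Section 5 of the paper; your route trades that homological input for the explicit chain-level identification via Fox calculus.

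One step of yours is genuinely incomplete: acyclicity. The two lemmas of Section 3 give the existence of $k$ with $\alpha_*(x_k)-1\neq 0$ and the compatibility of the minors $\det A_k$, but they do not show $\det A_k\neq 0$; without that, $\partial_2$ need not be injective over $\Q(t)$ and the torsion is simply undefined. You must input the nonvanishing $\Delta_K(t)\neq 0$ for a knot in $S^3$, for instance via $\Delta_K(1)=\pm 1$ (from the Seifert matrix description of $\Delta_K$, or from the fact that $H_*(E(K)_{\infty};\Q)$ is finite dimensional over $\Q$). With that supplied, and with the care you already promise about the collapse onto the Wirtinger spine and the sign and transpose conventions, the proof goes through.
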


\begin{remark}
Both of left and right hand sides are well defined up to $\pm t^{s}$.
\end{remark}

\subsection{Algebraic definitions}

Recall the definition of Reidemeister torsion. 

Let $C_*$ be a chain complex over a field $\F$ as 
\[
0\longrightarrow C_m
\overset{\partial_m}{\longrightarrow} C_{m-1}
\overset{\partial_{m-1}}{\longrightarrow} C_{m-2}
\longrightarrow \dots
\overset{\partial_2}{\longrightarrow} C_1
\overset{\partial_1}{\longrightarrow} C_0
\longrightarrow 0.
\]
Because 
$0\longrightarrow 
Z_q(=\textit{Ker}\partial_q)
\longrightarrow 
C_q
\overset{\partial_{{q}}}{\longrightarrow}
B_{q-1}(=\textit{Im}\partial_{{q}})
\longrightarrow 0$ is exact, then we have an isomorphism 
\[
C_q\cong Z_q\oplus B_{q-1},
\] 
which is not canonical. 
Note that a pair of bases of $Z_q$ and $B_{q-1}$ 
gives a basis of $C_q$. 

\begin{definition}
A chain complex $C_*$ is called to be acyclic if 
$B_q=Z_q$ 
for $q=0,1,\cdots,m$, 
that is, all homology groups $H_\ast(C_\ast)=0$ .
\end{definition}

From here we assume $C_\ast$ is acyclic and  
further a basis $\textbf{c}_q$ of $C_q$ is given for any $q$. 
That is, $C_\ast$ is a based acyclic chain complex 
of finite dimensional vector spaces over $\F$.

Here take a basis $\textbf{b}_q$ on $B_q$ for any $q$. 

On the above exact sequence 
\[
0\longrightarrow 
Z_q
\longrightarrow 
C_q
\overset{\partial_{{q}}}{\longrightarrow}
B_{q-1}
\longrightarrow 0,
\]
take a lift $\tilde{\textbf{b}}_{q-1}$ of $\textbf{b}_{q-1}$. 
Now a pair $(\textbf{b}_q,\tilde{\textbf{b}}_{q-1})$ gives a basis on $C_q$. 
Here two basis $\textbf{c}_q$ and $(\textbf{b}_q,\tilde{\textbf{b}}_{q-1})$ gives an isomorphism 
\[
C_q\cong B_q\oplus B_{q-1}.
\]

For any two bases 
$\textbf{b}=\{b_1,\cdots,b_n\}, \textbf{c}=\{c_1,\cdots,c_n\}$ of a vector space $V$ over $\F$. 
then there exists a non-singular matrix $P=(p_{ij})\in\GL(n;\F)$ 
such that $\displaystyle b_j=\sum_{i=1}^n p_{ji}c_i$. 

\begin{definition}
$P$ is called the transformation matrix from $\textbf{c}$ to $\textbf{b}$.
\end{definition}

Under this definition, 
we simply write  
$\left(\textbf{b}_q,\tilde{\textbf{b}}_{q-1}/\textbf{c}_q\right)$ 
for the transformation matrix 
from $\textbf{c}_q$ to $(\textbf{b}_q,\tilde{\textbf{b}}_{q-1})$
and 
$\left[\textbf{b}_q,\tilde{\textbf{b}}_{q-1}/\textbf{c}_q\right]$ 
for the determinant $\det \left(\textbf{b}_q,\tilde{\textbf{b}}_{q-1}/\textbf{c}_q\right)$. 

\begin{lemma}
The determinant $[\textbf{b}_q,\tilde{\textbf{b}}_{q-1}/\textbf{c}_q]$ 
is independent on choices of a lift $\tilde{\textbf{b}}_{q-1}$. 
Hence we can simply write $[\textbf{b}_q,\textbf{b}_{q-1}/\textbf{c}_q]$ to it.
\end{lemma}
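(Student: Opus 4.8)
The plan is to compare the two lifted bases directly and to exploit the multiplicativity of the determinant of a change of basis. First I would fix two lifts $\tilde{\textbf{b}}_{q-1}$ and $\tilde{\textbf{b}}_{q-1}'$ of one and the same basis $\textbf{b}_{q-1}$ of $B_{q-1}$, so that for each index the corresponding pair of lifted vectors has the same image under $\partial_q$. Subtracting them, their difference lies in $\ker\partial_q = Z_q$; and here the acyclicity hypothesis enters crucially, since it gives $Z_q = B_q$. Thus each vector of $\tilde{\textbf{b}}_{q-1}'$ equals the corresponding vector of $\tilde{\textbf{b}}_{q-1}$ plus a linear combination of the vectors of $\textbf{b}_q$, which are already part of the basis $(\textbf{b}_q,\tilde{\textbf{b}}_{q-1})$ of $C_q$.

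Next I would write down the transformation matrix $Q$ from the basis $(\textbf{b}_q,\tilde{\textbf{b}}_{q-1})$ to the basis $(\textbf{b}_q,\tilde{\textbf{b}}_{q-1}')$ in block form adapted to the splitting of $C_q$ into the $\textbf{b}_q$-part and the lift part. Since the vectors of $\textbf{b}_q$ are left unchanged and each new lift differs from the old one only by a combination of $\textbf{b}_q$, this matrix is block triangular of the shape $\begin{pmatrix} I & 0 \\ \ast & I \end{pmatrix}$, whose determinant equals $1$.

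Finally I would invoke the composition rule for transformation matrices with respect to the convention $b_j=\sum_i p_{ji}c_i$: if $P$ and $P'$ denote the transformation matrices from $\textbf{c}_q$ to $(\textbf{b}_q,\tilde{\textbf{b}}_{q-1})$ and to $(\textbf{b}_q,\tilde{\textbf{b}}_{q-1}')$ respectively, then $P'=QP$, whence $\det P'=\det Q\cdot\det P=\det P$. This is exactly the asserted independence of $[\textbf{b}_q,\tilde{\textbf{b}}_{q-1}/\textbf{c}_q]$ from the lift, so the abbreviated notation $[\textbf{b}_q,\textbf{b}_{q-1}/\textbf{c}_q]$ is justified.

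The step I expect to require the most care is the identification $\ker\partial_q = B_q$: it is precisely acyclicity that forces the difference of two lifts to be expressible in the already-chosen basis $\textbf{b}_q$, and without this the triangular structure of $Q$ (and hence the determinant-$1$ conclusion) would break down. A secondary point to state carefully is the direction convention for transformation matrices, so that the composition comes out as $P'=QP$ in the correct order rather than its reverse.
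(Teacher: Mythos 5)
Your argument is correct and is essentially the paper's own proof: both observe that the difference of two lifts lies in $Z_q=B_q$ by acyclicity, hence is a combination of the vectors of $\textbf{b}_q$, so the change of basis is unipotent and the determinant is unchanged. You merely make explicit, via the block-triangular matrix $Q$ and the multiplicativity $\det P'=\det Q\cdot\det P$, what the paper summarizes as following "by the definition of the determinant."
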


\begin{proof}
Take another lift $\hat{\textbf{b}}_{q-1}$ of $\textbf{b}_{q-1}$ on $C_q$. 
For example, one vector $v$ in $\tilde{\textbf{b}}_{q-1}$ 
is replaced 
to another vector $v'$ in $\hat{\textbf{b}}_{q-1}$. 
But $v, v'$ map to the same vector in $B_{q-1}$. 
Here 
\[
0\longrightarrow 
Z_q\longrightarrow C_q
\longrightarrow B_{q-1}
\longrightarrow 0
\]
is an exact sequence, 
then a difference $v-v'$ belongs to $Z_q=B_q$. 
Hence $v-v'$ can be expressed as a linear combination of the vectors of $\textbf{b}_{q}$. 
Then by the definition of the determinant, it can be seen that 
\[
\left[\textbf{b}_q,\tilde{\textbf{b}}_{q-1}/\textbf{c}_q\right]
=
\left[\textbf{b}_q,\hat{\textbf{b}}_{q-1}/\textbf{c}_q\right]. 
\]

Therefore the determinant is not changed. 
\end{proof}

\begin{definition}
The torsion $\tau(C_*)$ of a based chain complex $(C_*,\{\textbf{c}_{q}\})$ is defined by 
\[
\tau(C_*)=
\frac{\prod_{q:odd}[\textbf{b}_q,\textbf{b}_{q-1}/\textbf{c}_q]}
{\prod_{q:even}[\textbf{b}_q,\textbf{b}_{q-1}/\textbf{c}_q]}
\in \F\setminus\{0\}.
\]
\end{definition}

\begin{lemma}
The torsion $\tau(C_*)$ is independent of choices of $\textbf{b}_0,\cdots,\textbf{b}_m$. 
\end{lemma}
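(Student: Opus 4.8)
The plan is to reduce the statement to the effect of changing a single basis $\textbf{b}_q$, and then to observe that such a change contributes the \emph{same} determinantal factor to exactly two of the terms appearing in the definition of $\tau(C_*)$ — one in the numerator and one in the denominator — so that the factors cancel.

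First I would remark that any two systems of choices $\{\textbf{b}_q\}$ and $\{\textbf{b}'_q\}$ differ by a finite sequence of moves, each replacing a single $\textbf{b}_q$ while leaving the others fixed. Since $C_*$ is acyclic we have $B_m = Z_m = 0$, so $\textbf{b}_m$ is the empty basis and the only genuine choices are $\textbf{b}_0,\dots,\textbf{b}_{m-1}$. Hence it suffices to show that $\tau(C_*)$ is unchanged when a single $\textbf{b}_q$ is replaced by $\textbf{b}'_q$.

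The key observation is that the basis $\textbf{b}_q$ of $B_q$ enters the definition of $\tau(C_*)$ in exactly two places: in the term $[\textbf{b}_q,\textbf{b}_{q-1}/\textbf{c}_q]$ of degree $q$, where it serves as the $B_q$-summand of $C_q$, and in the term $[\textbf{b}_{q+1},\textbf{b}_q/\textbf{c}_{q+1}]$ of degree $q+1$, where its lift $\tilde{\textbf{b}}_q$ is used inside $C_{q+1}$ via the exact sequence $0\to Z_{q+1}\to C_{q+1}\to B_q\to 0$. This remains true at the ends: $\textbf{b}_0$ appears in degrees $0$ and $1$ because $B_{-1}=0$, and $\textbf{b}_{m-1}$ appears in degrees $m-1$ and $m$ because $\textbf{b}_m$ is empty. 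Writing $P$ for the transformation matrix from $\textbf{b}_q$ to $\textbf{b}'_q$, I would check from the block-triangular form of the combined transformation matrices that replacing $\textbf{b}_q$ by $\textbf{b}'_q$ multiplies the degree-$q$ determinant by $\det P$; and, choosing the lift of $\textbf{b}'_q$ to be $P$ applied to the old lift $\tilde{\textbf{b}}_q$ (legitimate since the preceding lemma says the determinant does not depend on the lift), I would see that it multiplies the degree-$(q+1)$ determinant by $\det P$ as well.

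Finally, because $q$ and $q+1$ have opposite parity, one of these two terms lies in the numerator and the other in the denominator of $\tau(C_*)$, so the two factors of $\det P$ cancel and $\tau(C_*)$ is unchanged. I expect the main obstacle to be purely bookkeeping: confirming that each $\textbf{b}_q$ really enters precisely two terms of opposite parity, treating the boundary degrees $q=0$ and $q=m$ correctly, and invoking the freedom to choose compatible lifts (guaranteed by the previous lemma) so that both determinants scale by the identical factor $\det P$.
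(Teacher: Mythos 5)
Your proposal is correct and follows essentially the same route as the paper: isolate the two terms $[\textbf{b}_q,\textbf{b}_{q-1}/\textbf{c}_q]$ and $[\textbf{b}_{q+1},\textbf{b}_q/\textbf{c}_{q+1}]$ in which $\textbf{b}_q$ appears, note that each is multiplied by the same factor $[\textbf{b}'_q/\textbf{b}_q]=\det P$, and cancel since the two indices have opposite parity. Your extra care with the boundary degrees and with choosing the lift of $\textbf{b}'_q$ compatibly is a harmless elaboration of what the paper leaves implicit.
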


\begin{proof}
Assume $\textbf{b}'_q$ is another basis of $B_q$. 

In the definition of $\tau(C_*)$, 
the difference between $\textbf{b}_q$ and $\textbf{b}'_q$ is related 
to the following only two parts $\left[\textbf{b}'_q,\textbf{b}_{q-1}/\textbf{c}_{q}\right]$ 
and $\left[\textbf{b}_{q+1},\textbf{b}'_q/\textbf{c}_{q+1}\right]$. 
By standard arguments of the linear algebra, 
\[
\left[\textbf{b}'_q,\textbf{b}_{q-1}/\textbf{c}_{q}\right]
=
\left[\textbf{b}_q,\textbf{b}_{q-1}/\textbf{c}_q\right]
\left[\textbf{b}'_q/\textbf{b}_q\right],
\]
\[
\left[\textbf{b}_{q+1},\textbf{b}'_q/\textbf{c}_{q+1}\right]
=\left[\textbf{b}_{q+1},\textbf{b}_q/\textbf{c}_{q+1}\right]
\left[\textbf{b}'_q/\textbf{b}_q\right].
\]
Since $\left[\textbf{b}'_q/\textbf{b}_q\right]$ appears 
in the both of the denominator and the numerator of the definition, 
they can be cancelled. 
\end{proof}

\begin{example}
Put $m=4$. Now consider 
\[
C_{\ast}:0\rightarrow C_{4}\rightarrow C_{3}\rightarrow C_{2}\rightarrow C_{1}\rightarrow C_{0}\rightarrow 0.
\]
As $\textbf{b}_4$ and $\textbf{b}_{-1}$ are zero, then by the definition, 
one has
\[
\begin{split}
\tau(C_*)
&=
\frac
{[\textbf{b}_4,\textbf{b}_{3}/\textbf{c}_4]
[\textbf{b}_2,\textbf{b}_{1}/\textbf{c}_2]
[\textbf{b}_0,\textbf{b}_{-1}/\textbf{c}_0]}
{[\textbf{b}_3,\textbf{b}_{2}/\textbf{c}_3]
[\textbf{b}_1,\textbf{b}_{0}/\textbf{c}_1]}\\
&=
\frac
{[\textbf{b}_{3}/\textbf{c}_4]
[\textbf{b}_2,\textbf{b}_{1}/\textbf{c}_2]
[\textbf{b}_0/\textbf{c}_0]}
{[\textbf{b}_3,\textbf{b}_{2}/\textbf{c}_3]
[\textbf{b}_1,\textbf{b}_{0}/\textbf{c}_1]}.
\end{split}
\]
In this case, 
the number of factors in the denominator 
and the number of factors in the numerator are not same. 
However it can be seen that $\tau(C_{\ast})$ is independent 
of choices of $\textbf{b}_{0},\textbf{b}_{1},\textbf{b}_{2},\textbf{b}_{3}$. 
\end{example}

\begin{example}
Next we put $m=3$. Here 
\[
C_{\ast}:0\rightarrow C_{3}\rightarrow C_{2}\rightarrow C_{1}\rightarrow C_{0}\rightarrow 0.
\]
As $\textbf{b}_3$ and $\textbf{b}_{-1}$ are zero, 
then one has 
\[
\begin{split}
\tau(C_*)
&=
\frac
{
[\textbf{b}_2,\textbf{b}_{1}/\textbf{c}_2]
[\textbf{b}_0,\textbf{b}_{-1}/\textbf{c}_0]}
{[\textbf{b}_3,\textbf{b}_{2}/\textbf{c}_3]
[\textbf{b}_1,\textbf{b}_{0}/\textbf{c}_1]}\\
&=
\frac
{
[\textbf{b}_2,\textbf{b}_{1}/\textbf{c}_2]
[\textbf{b}_0/\textbf{c}_0]}
{[\textbf{b}_{2}/\textbf{c}_3]
[\textbf{b}_1,\textbf{b}_{0}/\textbf{c}_1]}.
\end{split}
\]

In this case the numbers of factors are same. 
Similarly it can be seen that $\tau(C_{\ast})$ is independent 
of choices of $\textbf{b}_{0},\textbf{b}_{1},\textbf{b}_{2},\textbf{b}_{3}$
\end{example}
The following lemma is well-known as Mayer-Vietoris argument 
for a torsion invariant. 
See \cite{Milnor66} for the proof. 

\begin{lemma}
Let  
$0\rightarrow C'_*\rightarrow C_*\rightarrow C''_*\rightarrow 0$ 
be a short exact sequence of based chain complexes. 
Assume  the bases of $C_{*}$ are given as pairs of $(\textbf{c}_*',\textbf{c}_*'')$ 
where $\{\textbf{c}_*'\},\{\textbf{c}_*''\}$ are bases of $C_{*}', C_{*}''$. 
If two of $C'_*,C_*,C''_*$ are acyclic, 
then the third one is also acyclic and 
\[
\tau(C_*)=\pm \tau(C'_*)\tau(C''_*).
\]
\end{lemma}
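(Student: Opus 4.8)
The plan is to treat the two assertions—acyclicity of the third complex and the multiplicativity formula—separately, and for the latter to exhibit a single coherent choice of boundary bases that renders the relevant transformation matrices block triangular. For acyclicity, I would invoke the long exact homology sequence
\[
\cdots \to H_q(C'_*) \to H_q(C_*) \to H_q(C''_*) \to H_{q-1}(C'_*) \to \cdots
\]
attached to the short exact sequence of complexes (standard homological algebra). If two of the three complexes are acyclic, then in this sequence each term of the third is trapped between two vanishing terms, so exactness forces it to vanish as well; hence the third complex is acyclic too.

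Having established that all three are acyclic, so that $Z_q = B_q$ in each, I would first check that the inclusion and projection restrict in each degree to a short exact sequence of boundaries
\[
0 \to B'_q \to B_q \to B''_q \to 0.
\]
The kernel of $B_q \to B''_q$ is $B_q \cap C'_q = B'_q$ (using the acyclicity of $C''_*$), and surjectivity follows by lifting a boundary in $C''_q$ to $C_q$ and correcting by a suitable element of $C'_q$, solving the resulting bounding problem with the acyclicity of $C'_*$. With this sequence in hand I would fix arbitrary bases $\textbf{b}'_q$ of $B'_q$ and $\textbf{b}''_q$ of $B''_q$, lift each vector of $\textbf{b}''_q$ into $B_q$, and let $\textbf{b}_q$ consist of the images of $\textbf{b}'_q$ together with these lifts; this is a basis of $B_q$. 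By the earlier lemma, $\tau(C'_*)$, $\tau(C''_*)$, and $\tau(C_*)$ may all be computed from these particular choices.

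The heart of the argument is the factorisation of the transformation matrices. Using $\textbf{c}_q = (\textbf{c}'_q,\textbf{c}''_q)$ and the induced splitting $C_q \cong C'_q \oplus C''_q$, I would choose the lift $\tilde{\textbf{b}}_{q-1}$ of $\textbf{b}_{q-1}$ so that the part lifting $\textbf{b}'_{q-1}$ lies in $C'_q$ (lifted exactly as for $C'_*$), while the part lifting the chosen lifts of $\textbf{b}''_{q-1}$ projects to the lift used for $C''_*$. With these adapted choices the matrix $(\textbf{b}_q,\tilde{\textbf{b}}_{q-1}/\textbf{c}_q)$ is block triangular with respect to the $C'$- and $C''$-parts, and its two diagonal blocks are precisely the transformation matrices $(\textbf{b}'_q,\tilde{\textbf{b}}'_{q-1}/\textbf{c}'_q)$ and $(\textbf{b}''_q,\tilde{\textbf{b}}''_{q-1}/\textbf{c}''_q)$. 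Hence
\[
[\textbf{b}_q,\textbf{b}_{q-1}/\textbf{c}_q]
= \pm\,[\textbf{b}'_q,\textbf{b}'_{q-1}/\textbf{c}'_q]\,[\textbf{b}''_q,\textbf{b}''_{q-1}/\textbf{c}''_q],
\]
the sign absorbing the permutation of basis vectors needed to align $\textbf{c}_q$ with the block ordering. Forming the alternating product over $q$ in the definition of $\tau$ then gives $\tau(C_*) = \pm\,\tau(C'_*)\,\tau(C''_*)$.

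I expect the main obstacle to be the bookkeeping in the last step: verifying that the adapted lifts genuinely make $(\textbf{b}_q,\tilde{\textbf{b}}_{q-1}/\textbf{c}_q)$ block triangular—one must confirm that the off-diagonal contribution involves only $C'_q$ and so sits below the diagonal—and tracking the accumulated sign across degrees. Since the sign is exactly what the statement allows up to $\pm$, no finer control is required there; the real content is the degreewise verification of block triangularity.
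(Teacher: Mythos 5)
Your argument is correct, and it is essentially the standard proof of this multiplicativity statement. Note that the paper itself does not prove this lemma at all: it is stated as a known Mayer--Vietoris-type fact with the proof deferred to Milnor's \emph{Whitehead torsion} paper \cite{Milnor66}, which is exactly the argument you reconstruct. Your three steps all check out: the long exact homology sequence gives acyclicity of the third complex; the induced sequence $0\to B'_q\to B_q\to B''_q\to 0$ is exact (the identification $B_q\cap C'_q=B'_q$ does require the acyclicity of $C''_*$ one degree up, as you indicate, and the adapted lifts $\tilde{\textbf{b}}_{q-1}$ compatible with both the $C'$- and $C''$-sides exist by the same kind of correction argument); and with those choices the transformation matrix is block triangular with diagonal blocks $(\textbf{b}'_q,\tilde{\textbf{b}}'_{q-1}/\textbf{c}'_q)$ and $(\textbf{b}''_q,\tilde{\textbf{b}}''_{q-1}/\textbf{c}''_q)$. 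Your identification of the sign $\pm$ as coming purely from reordering the basis vectors to match the block decomposition agrees precisely with the remark the paper makes immediately after the lemma, namely that the sign arises from comparing $C_\ast\cong Z_\ast\oplus B_\ast$ with $C_\ast\cong Z'_\ast\oplus B'_\ast\oplus Z''_\ast\oplus B''_\ast$. The only thing I would ask you to write out in a full version is the existence of the compatible lifts, which you correctly flag as the main bookkeeping burden.
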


\begin{remark}
The reason why the signs $\pm $ appear in the right hand side is the following. 
To define the torsions we use the following isomorphisms;
\begin{itemize}
\item
$C'_{\ast}\cong Z'_{\ast}\oplus B'_{\ast}$, $C_{\ast}\cong Z_{\ast}\oplus B_{\ast}$, $C''_{\ast}\cong Z''_{\ast}\oplus B''_{\ast}$.
\end{itemize}
On the other hand, to get this formula, we use 
\begin{itemize}
\item
$C_{\ast}\cong C'_{\ast}\oplus C''_{\ast}\cong Z'_{\ast}\oplus B'_{\ast}\oplus Z''_{\ast}\oplus B''_{\ast}$.
\end{itemize}
Here the signs appear as we need to change orders of vectors in general. 
\end{remark}

\subsection{Geometric settings}

Now we apply this torsion invariant of chain complexes to the following geometric situations. 

Let $X$ be a finite CW-complex and $\tilde X$ a universal covering of $X$. 
We lift a CW-complex structure of $X$ on $\tilde X$. 
The fundamental group $\pi_1 X$ acts on $\tilde X$ from the right-hand side as deck transformations. 
By applying the cellular approximation theorem, 
we may assume this action is free and cellular 
under taking subdivisions if it is needed. 
Then the chain complex $C_*(\tilde{X};\Z)$ has the structure of a chain complex of free $\Z[\pi_1 X]$-modules. 

Let $\rho:\pi_1 X\rightarrow \GL(V)$  be an $n$-dimensional linear representation 
over a field $\F$. 
Using the representation $\rho$, 
$V$ admits a structure of a $\Z[\pi_1 X]$-module, 
which is denoted by $V_\rho$. 
Define the chain complex $C_*(X; V_\rho)$ 
by $C_*({\tilde X}; \Z)\otimes_{\Z[\pi_1 X]} V_\rho$. 
Here we choose a preferred basis of $C_i(X; V_\rho)$ for any $i$ 
as 
\[
(\tilde{u}_1\otimes \textbf{e}_1, \dots,\tilde{u}_1\otimes \textbf{e}_n, \dots,\tilde{u}_d\otimes\textbf{e}_1, \dots,\tilde{u}_d\otimes\textbf{e}_n)
\]
where $\{\textbf{e}_1 ,\dots,\textbf{e}_n\}$ is a basis of $V$, 
$\{u_1,\dots,u_d \}$ are the $i$-cells 
giving a basis of $C_i(X; \Z)$ and 
$\{\tilde{u}_1,\dots,\tilde{u}_d\}$ are lifts of them in $C_i(\tilde{X}; \Z)$. 

Now we suppose that $C_*(X; V_\rho)$ is acyclic, 
namely all homology groups $H_*(X; V_\rho)$ are vanishing. 
In this case we call $\rho$ an acyclic representation. 

\begin{definition}
Reidemeister torsion of $X$ for a representation $\rho$ 
is defined by 
\[
\tau_\rho(X)=\tau(C_*(X;V_\rho))\in\F\setminus \{0\}.
\]
\end{definition}

\begin{remark}
Reidemeister torsion $\tau_\rho(X)$ does not depend on the choices 
up to $\pm f$ 
where $f\in\mathit{Im}\{\det\circ\rho:\pi_{1}(X)\rightarrow\F\setminus\{0\}\}$.
See \cite{Milnor66} for the proof. 
\end{remark}

We apply the Reidemeister torsion 
for a knot $K$ in $S^{3}$ as follows. 
Fix a CW-complex structure on $E(K)$. 
We take its universal cover 
\[
\tilde E(K)\rightarrow E(K)
\]
and also a lift of the CW-complex structure of $E(K)$ to $\tilde E(K)$. 
By applying the cellular approximation theorem, 
we may assume $G(K)$ acts freely and cellularly on $\tilde E(K)$ 
from the right 
as deck transformations.

Now we can consider 
the abelianization 
$\alpha:G(K)\rightarrow \langle t\rangle\subset \GL(1;\Q(t))$ 
as a $1$-dimensional representation of $G(K)$ over the rational function field $\Q(t)$. 

Hence the chain complex of $E(K)$ 
with $\Q(t)_\alpha$-coefficients is defined by 
\[
C_*(E(K);\Q(t)_\alpha)=C_*(\tilde E(K);\Z)\otimes_{\Z G(K)} \Q(t)_\alpha.
\]
Here we take bases 
$\textbf{c}_i$ for $C_i(E(K);\Q(t)_\alpha)$ as 
\[
(\tilde{u}_1\otimes \textbf{1}, \dots,\tilde{u}_d\otimes\textbf{1})
\]
by using lifts of $i$-cells 
$\{u_1,\dots,u_d\}$ in $E(K)$ 
and a basis $\bf 1$ for the 1-dimensional vector space $\Q(t)$ over itself 
as we explained. 

Reidemeister torsion of $E(K)$ can be defined 
\[
\tau_\alpha(E(K))=\tau(C_*(E(K);\Q(t)_\alpha))\in \Q(t)\setminus\{0\}
\]
up to $\pm t^s$. 

From Milnor's theorem, some properties of Reidemeister torsion  induce properties of Alexander polynomial. 
For example, recall one of well known properties, 
which was proved by Seifert first. 
This can proved by using properties of Reidemeister torsion. 

\begin{theorem}[Seifert\cite{Seifert}, Milnor\cite{Milnor62}]
For any knot $K$, it holds
\[
\Delta_K(t^{-1})=\Delta_K(t)
\] 
up to $\pm t^{s}$.\end{theorem}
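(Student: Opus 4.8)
Throughout write $\doteq$ for equality up to multiplication by a unit $\pm t^{s}$ ($s\in\Z$). The plan is to derive the symmetry of $\Delta_{K}$ from a duality property of Reidemeister torsion, using Milnor's theorem to pass back and forth between $\Delta_{K}$ and $\tau_{\alpha}(E(K))$.

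First I would reformulate the statement in terms of torsion. By Milnor's theorem, $\Delta_{K}(t)\doteq (t-1)\,\tau_{\alpha}(E(K))$. Substituting $t\mapsto t^{-1}$ and using $t^{-1}-1=-t^{-1}(t-1)\doteq t-1$, the asserted identity $\Delta_{K}(t^{-1})\doteq\Delta_{K}(t)$ is equivalent to
\[
\tau_{\alpha}(E(K))\big|_{t\mapsto t^{-1}}\doteq\tau_{\alpha}(E(K)).
\]
Next I would recognize the substitution $t\mapsto t^{-1}$ as passage to a dual representation: it is induced by the anti-involution of $\Q(t)$ coming from $g\mapsto g^{-1}$ on $G(K)$ followed by $\alpha$. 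Denoting the resulting $1$-dimensional representation by $\bar\alpha$, the left-hand side above equals $\tau_{\bar\alpha}(E(K))$ up to a unit, so it suffices to compare $\tau_{\alpha}$ and $\tau_{\bar\alpha}$.

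The core of the argument is Poincar\'e--Lefschetz duality at the chain level. Since $E(K)$ is a compact orientable $3$-manifold with $\partial E(K)\cong T^{2}$, I would fix a CW-structure together with its dual structure and invoke Milnor's duality theorem for Reidemeister torsion \cite{Milnor62}: it identifies $C_{*}(E(K);\Q(t)_{\alpha})$ with the dual cochain complex of the pair $(E(K),\partial E(K))$ with $\bar\alpha$-coefficients, producing a relation of the form $\tau_{\alpha}(E(K))\doteq\tau_{\bar\alpha}(E(K),\partial E(K))^{\pm1}$. To return from the pair to the absolute complex I would apply the multiplicativity lemma above to the short exact sequence
\[
0\longrightarrow C_{*}(\partial E(K))\longrightarrow C_{*}(E(K))\longrightarrow C_{*}(E(K),\partial E(K))\longrightarrow 0,
\]
which gives $\tau(E(K))\doteq\tau(\partial E(K))\,\tau(E(K),\partial E(K))$. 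Here I would check that $T^{2}$ is $\bar\alpha$-acyclic --- the meridian maps to $t^{-1}\neq1$, so one circle factor already has acyclic homology --- and that $\tau_{\bar\alpha}(T^{2})\doteq1$, so the boundary contributes only a unit. Assembling these relations yields $\tau_{\alpha}(E(K))\doteq\tau_{\bar\alpha}(E(K))$, and unwinding the reduction of the first paragraph gives $\Delta_{K}(t^{-1})\doteq\Delta_{K}(t)$.

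The main obstacle is the duality step itself: making the chain-level Poincar\'e--Lefschetz isomorphism precise, matching the preferred cell bases of $C_{*}(E(K);\Q(t)_{\alpha})$ with the dual-cell bases, correctly carrying the anti-involution $t\mapsto t^{-1}$ through the coefficients, and controlling the sign and the exponent $\pm1$ produced by the odd dimension $3$ and by the reordering of basis vectors that is the source of the $\pm$ in the multiplicativity lemma. Once this is set up, verifying that the boundary torus is acyclic with unit torsion and bookkeeping the $(t-1)$ versus $(t^{-1}-1)$ factors are routine. (Seifert's original proof instead uses a Seifert surface: with a Seifert matrix $V$ one has $\Delta_{K}(t)\doteq\det(V-tV^{T})$, and transposing shows $\Delta_{K}(t^{-1})\doteq\Delta_{K}(t)$ directly.)
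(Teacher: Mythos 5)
Your proposal is correct in outline and follows precisely the route the paper indicates: the paper itself offers no proof of this theorem, only the remark that it ``can be proved by using properties of Reidemeister torsion'' together with the citation of Milnor's duality paper, and your sketch supplies exactly the intended ingredients (Milnor's identity $\Delta_K(t)/(t-1)\doteq\tau_\alpha(E(K))$, chain-level Poincar\'e--Lefschetz duality carrying the involution $t\mapsto t^{-1}$, multiplicativity of torsion for the pair sequence, and triviality of the torsion of the boundary torus). The only points left to nail down are the ones you already flag, and they work out: in the odd dimension $3$ Milnor's duality theorem gives $\tau(M,\partial M)=\overline{\tau(M)}$ with exponent $+1$, and $\tau_{\bar\alpha}(T^{2})\doteq 1$ since the meridian acts by $t^{-1}\neq 1$, so there is no gap.
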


We also have the following fact on the Alexander polynomial for a slice knot, 
which can be proved from the property of Reidemeister torsion. 
A slice knot is defined as follows. 
Now we consider $S^{3}=\partial B^{4}$. 

\begin{definition}
A knot $K\subset S^{3}$ is called a slice knot if there exists an embedded disk $D\subset B^{4}$ 
such that $\partial D=K\subset S^{3}=\partial B^{4}$.
\end{definition}

The next theorem is well-known and  classical theorem. 
It can be proved by using Reidemeister torsion. 

\begin{theorem}[Fox-Milnor\cite{Fox-Milnor}]
If $K$ is a slice knot, 
then the Alexander polynomial $\Delta_K(t)$ has a form of $\Delta_K(t)=\pm t^{s}f(t)f(t^{-1})$ 
where $f(t)\in\Z[t]$. 
\end{theorem}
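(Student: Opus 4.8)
The plan is to push the computation of $\Delta_K(t)$ off the knot exterior and onto the boundary of a four-manifold, and then extract the symmetric factorization from Poincar\'e--Lefschetz duality for Reidemeister torsion. Throughout, write $p\doteq q$ for equality in $\Q(t)$ up to a unit $\pm t^{s}$. Let $D\subset B^{4}$ be a slice disk with $\partial D=K$, fix a tubular neighborhood $N(D)\cong D\times D^{2}$, and set $W=B^{4}\setminus N(D)$, a compact oriented $4$-manifold. A general position argument gives $H_{1}(W;\Z)\cong\Z$, generated by a meridian $\mu$ of $K$, so that the abelianization $\alpha_{W}\colon\pi_{1}(W)\to\langle t\rangle$ is defined and restricts to the knot abelianization $\alpha$ on $E(K)$. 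The boundary decomposes as $\partial W=E(K)\cup_{T}V$, where $T=\partial E(K)$ is the peripheral torus and $V\cong D^{2}\times S^{1}$ is a solid torus glued so that its meridian is the $0$-framed longitude and its core is a meridian $\mu$; thus $\partial W$ is the $0$-surgery $S^{3}_{0}(K)$.

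First I would secure the homological input that makes all torsions below defined: $H_{\ast}(W;\Q(t)_{\alpha_{W}})=0$, and moreover $H_{2}$ of the infinite cyclic cover $\tilde W$ is $\Q$-trivial. This is exactly where $D$ being an embedded \emph{disk} enters: the cover $\tilde W$ has finitely generated rational homology, concentrated in degrees $0,1$, that is $t$-torsion and hence dies over $\Q(t)$. Granting this, $\tau_{\alpha}(W)$ and $\tau_{\alpha}(W,\partial W)$ are defined, and identifying torsion with orders of the integral Alexander modules of $\tilde W$ gives $\tau_{\alpha}(W)\doteq f(t)/(t-1)$, where $f(t)\in\Z[t]$ is the order of $H_{1}(\tilde W;\Z)$; its integrality is automatic, being the generator of an elementary ideal of a finitely generated $\Z[t,t^{-1}]$-module.

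Next I would compute $\tau_{\alpha}(\partial W)$ directly from the gluing via the Mayer--Vietoris torsion lemma stated above, applied to $0\to C_{\ast}(T)\to C_{\ast}(E(K))\oplus C_{\ast}(V)\to C_{\ast}(\partial W)\to 0$. One checks that $T$ is $\alpha$-acyclic with $\tau_{\alpha}(T)\doteq 1$, and that $V\simeq S^{1}$ with monodromy $t$ gives $\tau_{\alpha}(V)\doteq(t-1)^{-1}$; combining with Milnor's theorem $\tau_{\alpha}(E(K))\doteq\Delta_{K}(t)/(t-1)$ yields
\[
\tau_{\alpha}(\partial W)\doteq\tau_{\alpha}(E(K))\,\tau_{\alpha}(V)\,\tau_{\alpha}(T)^{-1}\doteq\frac{\Delta_{K}(t)}{(t-1)^{2}}.
\]
The decisive step is duality. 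Applying Milnor's duality theorem for Reidemeister torsion to the compact oriented $4$-manifold $W$, using the involution $t\mapsto t^{-1}$ on $\Q(t)$ induced by $\alpha$, gives $\tau_{\alpha}(W,\partial W)\doteq\overline{\tau_{\alpha}(W)}^{\,-1}$, where $\overline{g(t)}=g(t^{-1})$. Feeding this into the torsion multiplicativity of the pair sequence $0\to C_{\ast}(\partial W)\to C_{\ast}(W)\to C_{\ast}(W,\partial W)\to 0$, namely $\tau_{\alpha}(\partial W)\doteq\tau_{\alpha}(W)\,\tau_{\alpha}(W,\partial W)^{-1}$, exhibits $\tau_{\alpha}(\partial W)$ as the norm $\tau_{\alpha}(W)\,\overline{\tau_{\alpha}(W)}\doteq f(t)f(t^{-1})/(t-1)^{2}$. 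Matching with the gluing computation cancels the $(t-1)^{2}$ and leaves $\Delta_{K}(t)\doteq f(t)f(t^{-1})$, the asserted form.

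The main obstacle is the duality step: the torsion duality theorem for manifolds with boundary is not developed in these notes, so it must be imported, and one must correctly track the parity-dependent exponent $(-1)^{\dim}$, the involution $t\mapsto t^{-1}$, and the sign and unit ambiguities so that a product, rather than a ratio, emerges. Closely tied to this is the bookkeeping of the $(t-1)$-factors from the $H_{0}$-terms of $W$, $E(K)$ and from $V$: the factorization comes out cleanly only once $H_{2}(\tilde W;\Q)=0$ is verified, which pins the exponent of $(t-1)$ in $\tau_{\alpha}(W)$ to $1$. Integrality of $f$ and the symmetry are then routine, and the Seifert--Milnor relation $\Delta_{K}(t^{-1})\doteq\Delta_{K}(t)$ proved above serves as a consistency check. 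As an alternative that sidesteps the duality machinery, one could instead use a genus-$g$ Seifert surface whose form admits a rank-$g$ metabolizer, obtained from $\ker(H_{1}(F)\to H_{1}(W))$ by the half-lives-half-dies principle; the Seifert matrix then has a vanishing $g\times g$ block, and expanding $\det(V-tV^{\mathsf T})$ gives the integral factorization $f(t)f(t^{-1})$ at once.
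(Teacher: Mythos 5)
The paper does not actually prove this theorem: it states it with the remark that it ``can be proved by using Reidemeister torsion'' and cites Fox--Milnor, so there is no in-text argument to compare against. Your outline is exactly the torsion proof the author is alluding to, and its skeleton is sound: $\partial W\cong E(K)\cup_{T}(D^{2}\times S^{1})=S^{3}_{0}(K)$, the Mayer--Vietoris computation $\tau_{\alpha}(\partial W)\doteq\Delta_{K}(t)/(t-1)^{2}$ (with $\tau_{\alpha}(T)\doteq 1$ and $\tau_{\alpha}(V)\doteq(t-1)^{-1}$ both checked correctly), and the combination of duality with multiplicativity for the pair sequence giving $\tau_{\alpha}(\partial W)\doteq\tau_{\alpha}(W)\,\overline{\tau_{\alpha}(W)}$ are all right, as is the final cancellation.

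Two steps you flag do need actual arguments, and both can be closed. For acyclicity of $C_{*}(W;\Q(t))$: Mayer--Vietoris on $B^{4}=W\cup N(D)$ with $W\cap N(D)\simeq S^{1}$ gives $H_{*}(W;\Q)\cong H_{*}(S^{1};\Q)$ (this is where $D$ being a disk enters), so the Wang sequence shows $t-1$ acts surjectively on every $H_{i}(\tilde W;\Q)$, and a finitely generated $\Q[t,t^{-1}]$-module $M$ with $M=(t-1)M$ is torsion; acyclicity follows without yet knowing $H_{2}(\tilde W;\Q)=0$. For the vanishing of $H_{2}(\tilde W;\Q)$ itself: general position gives that $\pi_{1}(E(K))\to\pi_{1}(W)$ is onto, hence $H_{1}(\partial W;\Q[t,t^{-1}])\to H_{1}(W;\Q[t,t^{-1}])$ is onto, hence $H_{1}(W,\partial W;\Q[t,t^{-1}])=0$; Poincar\'e--Lefschetz duality and universal coefficients over the PID $\Q[t,t^{-1}]$ then identify $H_{2}(\tilde W;\Q)$ with $\overline{\mathrm{Ext}(H_{1}(W,\partial W;\Q[t,t^{-1}]),\Q[t,t^{-1}])}=0$. (Alternatively you can avoid this vanishing altogether: even if $\tau_{\alpha}(W)$ were only a rational function $g$, the identity $\Delta_{K}(t)\doteq(t-1)^{2}g(t)g(t^{-1})$ plus unique factorization in $\Q[t,t^{-1}]$ forces $\Delta_{K}\doteq f\bar f$ with $f$ a Laurent polynomial, and Gauss's lemma puts $f$ in $\Z[t]$.) The duality theorem $\tau(W,\partial W)\doteq\overline{\tau(W)}^{(-1)^{n-1}}$ is genuinely outside these notes and must be imported from Milnor, but that is a legitimate citation. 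Your alternative route via a metabolizer $\ker(H_{1}(F)\to H_{1}(W))$ for the Seifert form is the original Fox--Milnor/Levine argument; it is more elementary, trading torsion duality for the ``half lives, half dies'' lemma, and gives the integral statement directly once the metabolizer is chosen to be a direct summand.
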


\section{Order and obstruction}

Here we would like to mention two more things related with the Alexander polynomial;
\begin{itemize}
\item
an order of $H_{1}(E(K);\Q[t,t^{-1}]_{\alpha})$.
\item
an obstruction to deform an abelian representation.
\end{itemize}

It is seen $
H_{1}(E(K);\Q[t,t^{-1}]_{\alpha})\cong
H_{1}(E(K)_{\infty};\Q)\text{ as a }\Q[t,t^{-1}]-\text{module}$ 
where 
$E(K)_{\infty}\rightarrow E(K)$ is the $\Z$-covering 
corresponding to the abelianization epimorphism 
$\alpha:G(K)=\pi_{1}(E(K))\rightarrow\Z=\langle t\rangle$. 

An order of a finitely generated module over a principal ideal domain 
is defined as follows. 
This is a generalization of an order of an abelian group.

Let $M$ be a finitely generated $\Q[t,t^{-1}]-$module without free parts. 
From the structure theorem of a finitely generated module over a principal ideal domain, 
one has 
\[
M \cong\Q[t,t^{-1}]/(p_1)\oplus \cdots \oplus\Q[t,t^{-1}]/(p_k)
\]
where 
$p_1,\cdots,p_k\in \Q[t,t^{-1}]$ 
such that 
\[\Q[t,t^{-1}]\supsetneq(p_{1})\supset (p_{2})\supset \cdots \supset(p_{k})\neq(0).
\]

\begin{definition}
The order ideal $\text{ord}(M)$ of $M$ is defined by 
\[
\text{ord}(M)=(p_1\cdots p_k)\subset\Q[t,t^{-1}].
\]
\end{definition}

Applying an order to the case of $H_\ast(E(K);\Q[t,t^{-1}]_\alpha)$, 
the following proposition holds. 

\begin{proposition}
\noindent
\begin{itemize}
\item
$\text{ord}(H_{1}(E(K);\Q[t,t^{-1}]_{\alpha}))
=(\Delta_{K}(t))$.
\item
$\text{ord}(H_{0}(E(K);\Q[t,t^{-1}]_{\alpha}))
=(t-1)$.
\end{itemize}
\end{proposition}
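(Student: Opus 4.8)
The plan is to compute both homology groups directly from a chain complex built out of a group presentation via Fox's free differentials, and then read off the order ideals by linear algebra over the principal ideal domain $\Q[t,t^{-1}]$. Since the knot exterior $E(K)$ is aspherical, it is a $K(G(K),1)$, so $H_*(E(K);\Q[t,t^{-1}]_\alpha)$ agrees with the group homology $H_*(G(K);\Q[t,t^{-1}]_\alpha)$. The latter is computed in low degrees by the cellular chain complex of the universal cover of a presentation $2$-complex $W$ for $G(K)=\langle x_1,\dots,x_n\mid r_1,\dots,r_{n-1}\rangle$; passing from $W$ to a $K(G(K),1)$ only attaches cells of dimension $\geq 3$, which alters $\partial_3$ and higher but leaves $\partial_1,\partial_2$, and hence $H_0$ and $H_1$, untouched. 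Tensoring $C_*(\tilde W;\Z)$ with $\Q[t,t^{-1}]_\alpha$ over $\Z G(K)$ yields
\[
\Q[t,t^{-1}]^{\,n-1}\xrightarrow{\ \partial_2\ }\Q[t,t^{-1}]^{\,n}\xrightarrow{\ \partial_1\ }\Q[t,t^{-1}],
\]
where $\partial_2$ is exactly the Alexander matrix $A=(\alpha_*(\partial r_i/\partial x_j))$ acting by rows, and $\partial_1$ sends the $j$-th generator to $\alpha_*(x_j)-1$. Working with a presentation in which $\alpha_*(x_j)=t$ for all $j$ is harmless because the order ideal is an invariant of the module.

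For $H_0$ I would simply note that it is the cokernel of $\partial_1$, whose image is the ideal generated by the elements $\alpha_*(x_j)-1=t-1$. Hence $H_0\cong\Q[t,t^{-1}]/(t-1)$ and $\mathrm{ord}(H_0)=(t-1)$. Even for a general presentation the image is generated by the $t^{a_j}-1$, whose greatest common divisor is $t-1$ since $\alpha$ is surjective.

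The substance is in $H_1=\ker\partial_1/\operatorname{im}\partial_2$. First, $\ker\partial_1=\{(v_1,\dots,v_n):\sum_j v_j=0\}$ is free of rank $n-1$ with basis $g_k=e_k-e_n$ $(1\le k\le n-1)$. Next, applying the fundamental formula of free differentials to each relator and then $\alpha_*$ --- exactly the computation already made in the proof of the lemma relating $\det A_k$ and $\det A_l$ --- shows $\sum_j A_{ij}=0$, so every row of $A$ lies in $\ker\partial_1$, and in the basis $\{g_k\}$ the $i$-th row has coordinates $(A_{i1},\dots,A_{i,n-1})$; that is, the matrix of $\operatorname{im}\partial_2$ inside $\ker\partial_1$ is precisely the square matrix $A_n$ obtained by deleting the $n$-th column of $A$. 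Therefore $H_1\cong\Q[t,t^{-1}]^{\,n-1}/(\text{row space of }A_n)$ is presented by a square matrix, and the order of such a cokernel over the PID $\Q[t,t^{-1}]$ is generated by its determinant, giving $\mathrm{ord}(H_1)=(\det A_n)=(\Delta_K(t))$.

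The main points requiring care, rather than genuine difficulty, are two. The first is the topological reduction: I must justify that $H_0$ and $H_1$ with these twisted coefficients are computed from the presentation $2$-complex, i.e. that the higher cells needed to build a $K(G(K),1)$ do not disturb $\partial_1$ or $\partial_2$ (one may instead invoke that $E(K)$ collapses to a $2$-dimensional spine realizing such a presentation, avoiding asphericity altogether). The second is that the order ideal is only defined for a module with no free part, so I must record that $H_1$ is $\Q[t,t^{-1}]$-torsion; this is equivalent to $\det A_n=\Delta_K(t)\neq 0$, which holds since $\Delta_K(1)=\pm1$, and it is exactly what makes $(\Delta_K(t))$ a nonzero order ideal.
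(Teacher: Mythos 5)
Your argument is correct and essentially complete, but it is worth noting that the paper does not actually prove this proposition: it states it and refers the reader to Milnor's paper on infinite cyclic coverings \cite{Milnor67}, whose route goes through the identification $H_{*}(E(K);\Q[t,t^{-1}]_{\alpha})\cong H_{*}(E(K)_{\infty};\Q)$ stated just above the proposition and the structure theory of the homology of the infinite cyclic cover. Your computation is instead a direct chain-level one from a presentation $2$-complex via Fox calculus, which is more in the spirit of the paper's Section 3 and makes the appearance of the Alexander matrix transparent: the key observation that $\sum_{j}A_{ij}=0$, so that the rows of $A$ written in the basis $e_{k}-e_{n}$ of $\ker\partial_{1}$ form exactly the square matrix $A_{n}$, is the same use of the fundamental formula that drives the lemma relating $\det A_{k}$ and $\det A_{l}$. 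The two points you flag are indeed the only ones needing care, and both are handled: the reduction to the $2$-complex (via asphericity of $E(K)$, or, avoiding the sphere theorem, via the standard fact that $E(K)$ collapses to a spine realizing a deficiency-one Wirtinger presentation), and the torsionness of $H_{1}$, for which you invoke $\Delta_{K}(1)=\pm 1$ --- the one external input, itself recoverable from your setup by setting $t=1$ and observing that the resulting integer matrix presents the trivial group $H_{1}(E(K);\Z)/\langle [x_{n}]\rangle=0$ because every Wirtinger generator is a meridian. What Milnor's approach buys is generality (it applies to any finite complex with an infinite cyclic covering and ties the orders in all degrees to Reidemeister torsion at once); what yours buys is a self-contained proof using only the tools the paper has already developed.
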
 

Put \cite{Milnor67} as a reference.

Next we mention that 
the Alexander polynomial is an obstruction 
to deform an 1-dimensional abelian representation 
\[
\alpha_a:G(K)\rightarrow \C^\ast=\C\setminus\{0\}
\subset \C\rtimes \C^{\ast}\subset \GL(2;\C).
\] 

Take 
$G(K)=\langle x_{1},\cdots,x_{n}\ |\ r_{1},\cdots,r_{n-1}\rangle$
 be a Wirtinger presentation of $G(K)$. 
By putting $t=a\neq 0$, 
one has a 1-dimensional abelian representation  
\[
\alpha_{a}=\alpha|_{t=a}:G(K)\ni x_{i}\mapsto a\in \C. 
\]
We put 
$\rho_{a}(x_{i})
=\begin{pmatrix} 
a & b_{i}\\ 
0 & 1
\end{pmatrix}
\in\GL(2;\C)$ 
for the image of $x_i$. 
Now a map 
\[
\rho_a:\{x_1,\dots,x_n\}\rightarrow \GL(2;\C)
\]
is given. 
If all $b_{1},\cdots,b_{n}=0$, 
then clearly $\rho_{a}$ gives a representation 
\[
\rho_{a}:G(K)\ni x_{i}
\mapsto 
\begin{pmatrix} a & 0\\0 & 1\end{pmatrix}\in \GL(2;\C).
\]
However it is also an abelian representation. 
Assume $b_i\neq 0$ for some $i$. 
Here we consider the following problem. 

\begin{problem}
When $\rho_{a}$ can be extended as a non abelian representation ?
\end{problem}

The answer is given by the next theorem. 

\begin{theorem}[de Rham\cite{deRham}]
A map $\rho_{a}$ gives a representation if and only if $\Delta_{K}(a)=0$.
\end{theorem}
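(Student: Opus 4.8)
The plan is to convert the question into a single linear-algebra condition on the Alexander matrix evaluated at $t=a$ and then recognize that condition as the vanishing of $\Delta_K(a)$. First I would record the shape of the images: every matrix in $\C\rtimes\C^\ast$ used here looks like $\begin{pmatrix} \ast & \ast\\ 0 & 1\end{pmatrix}$, and the multiplication rule $\begin{pmatrix} a & b\\ 0 & 1\end{pmatrix}\begin{pmatrix} a' & b'\\ 0 & 1\end{pmatrix}=\begin{pmatrix} aa' & ab'+b\\ 0 & 1\end{pmatrix}$ forces the diagonal entry of $\rho_a(w)$, for any word $w$ in $x_1,\dots,x_n$, to be the abelian value $\alpha_a(w)$, while the off-diagonal entry $\beta(w)\in\C$ is an additive cocycle twisted by $\alpha_a$. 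The key computation, which I would prove by induction on word length in exactly the manner of the Fundamental formula of free differentials (checking $w=w_kx_i^{\pm1}$ against the product rule $ab'+b$), is the identity $\beta(w)=\sum_{j=1}^n\alpha_a\!\left(\frac{\partial w}{\partial x_j}\right)b_j$.

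Next I would apply this to the relators. Since each Wirtinger relator $r_i$ has exponent sum zero, $\alpha_a(r_i)=a^0=1$, so the diagonal entry of $\rho_a(r_i)$ is automatically $1$; therefore $\rho_a(r_i)=I$ holds if and only if $\beta(r_i)=0$. By the identity above this means $\rho_a$ is a homomorphism exactly when the column vector $b=(b_1,\dots,b_n)^{T}$ lies in the kernel of $A|_{t=a}$, the Alexander matrix with $t$ specialized to $a$. Thus the existence question becomes a question about $\dim\ker A|_{t=a}$.

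Then I would isolate the abelian part of this kernel. Applying the Fundamental formula to $r_i$ and using $\alpha_a(r_i)=1$ gives $(a-1)\sum_j\alpha_a\!\left(\frac{\partial r_i}{\partial x_j}\right)=0$; since $a\neq 1$ this shows $(1,\dots,1)^{T}\in\ker A|_{t=a}$ always, and a one-line commutator computation shows $\rho_a$ is abelian precisely when all $b_i$ coincide, i.e. when $b$ is a multiple of $(1,\dots,1)^{T}$. Hence a genuinely nonabelian $\rho_a$ exists if and only if $\dim\ker A|_{t=a}\ge 2$, equivalently $\mathrm{rank}\,A|_{t=a}\le n-2$. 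Finally I would translate the rank condition: the $(n-1)\times(n-1)$ minors of the $(n-1)\times n$ matrix $A|_{t=a}$ are exactly $\det A_1|_{t=a},\dots,\det A_n|_{t=a}$, which by the earlier lemma (with all $\alpha_\ast(x_k)=t$) agree up to sign and each equals $\Delta_K(t)$ up to a unit $\pm t^s$. As $a\neq 0$, they all vanish at $a$ if and only if $\Delta_K(a)=0$, so $\mathrm{rank}\,A|_{t=a}\le n-2$ is equivalent to $\Delta_K(a)=0$, giving the theorem.

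The hard part will be twofold. The first delicate point is establishing the cocycle identity $\beta(w)=\sum_j\alpha_a(\partial w/\partial x_j)\,b_j$ cleanly, including the inverse-generator case, since this is what links the affine representation to Fox calculus. The second, and more conceptually subtle, is correctly separating the ever-present abelian direction $(1,\dots,1)^{T}$ from honestly nonabelian kernel vectors: the literal statement ``$\rho_a$ gives a representation'' must be read as the existence of a \emph{nonabelian} extension, so the whole content lies in the jump from $\dim\ker=1$ to $\dim\ker\ge 2$ rather than in the mere existence of some homomorphism.
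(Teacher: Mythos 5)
The paper states this theorem without proof (it is only cited to de Rham), so there is no internal argument to compare against; your proposal is correct and is essentially the standard proof. The cocycle identity $\beta(w)=\sum_j\alpha_a\bigl(\frac{\partial w}{\partial x_j}\bigr)b_j$ is exactly the Fox-calculus description of crossed homomorphisms twisted by $\alpha_a$, the relator condition does become $A|_{t=a}\,b=0$, and since for a Wirtinger presentation all maximal minors $\det A_k|_{t=a}$ agree up to sign with $\Delta_K(a)$, the rank drop to $n-2$ is equivalent to $\Delta_K(a)=0$. Two points deserve explicit attention. First, you invoke $a\neq 1$ twice (to put $(1,\dots,1)^T$ in the kernel, and to show the image is abelian iff all $b_i$ coincide); this is harmless because at $a=1$ every $\rho_1(x_i)$ is unipotent upper-triangular, so the image is automatically abelian while $\Delta_K(1)=\pm 1\neq 0$, but that case should be disposed of separately rather than silently. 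Second, your reading of the statement is the right one: the paper's hypothesis ``$b_i\neq 0$ for some $i$'' does not by itself rule out the abelian solution with all $b_i$ equal and nonzero (which is conjugate to the diagonal representation when $a\neq1$), so the content of the theorem really is the jump from $\dim\ker A|_{t=a}=1$ to $\dim\ker A|_{t=a}\geq 2$, exactly as you say.
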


\begin{remark}
This is one motivation for Wada to define twisted Alexander polynomial, 
which is how we can generalize an obstruction for a higher dimensional representation. 
\end{remark}

\section{Twisted Alexander polynomial}

Historically there are two studies first 
to give a generalization of the Alexander polynomial 
by Lin\cite{Lin} and Wada\cite{Wada}. 
In this paper we follow the definition due to Wada, because 
it is most computable by using free differentials 
and 
it can be related to Reidemeister torsion of $E(K)$ directly. 

Recall $K$ is a knot in $S^{3}$ and $G(K)$ is the knot group. 
For simplicity we consider a representation of $G(K)$ 
in a 2-dimensional unimodular group over a field $\F$. 
From this assumption the twisted Alexander polynomial is well-defined 
up to $t^{2s}(s\in\Z)$ 

\begin{remark}
Wada defined the twisted Alexander polynomial 
for any finite presentable group 
with an epimorphism onto a free abelian group 
and a $\GL(l;R)$-representation 
over a Euclidean domain $R$. 
\end{remark}

Fix a presentation as 
\[
G(K)=\langle x_1,\ldots,x_n~|~r_1,\ldots,r_{n-1}\rangle
\]
with deficiency one. 
Let $\rho:G(K)\to \SL(2;\F)$ be a representation.
Let $M(2;\F)$ be the matrix algebra 
of $2\times 2$ matrices over $\F$. 
We write 
\[
\rho_\ast:\Z G(K) \rightarrow \Z\SL(2;\F)\cong M(2;\F)
\]
for a ring homomorphism induced by $\rho$ 
and 
\[
\alpha_\ast:\Z G(K)\rightarrow \Z\Z=\Z\langle t\rangle\cong\Z[t,t^{-1}]
\] 
for a ring homomorphism induced by $\alpha$. 
By taking the tensor product of them, 
we obtain an induced ring homomorphism
\[
\rho_\ast\otimes\alpha_\ast:
\Z G(K)\to M(2;\F)\otimes\Z[t,t^{-1}]\cong M\left(2;\F[t,t^{-1}]\right)
\]
and 
\[
\Phi:\Z F_n\to M\left(2;\F[t,t^{-1}]\right)
\] 
the composite of 
$\Z F_n\to\Z G(K)$ 
induced by the presentation 
and 
\[
\rho_\ast\otimes\alpha_\ast:\Z G(K)\rightarrow M\left(2;\F[t,t^{-1}]\right). 
\]

\begin{definition}
The $(n-1)\times n$ matrix $A_{\rho}$ 
whose $(i,j)$ component is the $2\times 2$ matrix 
$$
\Phi\left(\frac{\partial r_i}{\partial x_j}\right)
\in M\left(2;\F[t,t^{-1}]\right),
$$ 
this matrix is called 
the {twisted} Alexander matrix of a knot group 
$G(K)=\langle x_1,\ldots,x_n~|~r_1,\ldots,r_{n-1}\rangle$ associated to $\rho$. 
\end{definition}

\begin{remark}
This matrix $A_{\rho}$ can be considered as 
\[
\begin{split}
A_{\rho}
&\in M\left((n-1)\times n;M\left(2;\F[t,t^{-1}]\right)\right)\\
&=M\left(2(n-1)\times 2n;\F[t,t^{-1}]\right).
\end{split}
\]
\end{remark}

Let $A_{\rho,k}$ be the $(n-1)\times(n-1)$ matrix obtained from $A_{\rho}$ 
by removing the $k$-th column.
Then one has 
\[
\begin{split}
A_{\rho,k}
&\in M\left((n-1)\times (n-1);M\left(2;\F[t,t^{-1}]\right)\right)\\
&=M\left(2(n-1)\times 2(n-1);\F[t,t^{-1}]\right).
\end{split}
\]

By similar arguments for Alexander polynomials, 
the following two lemmas can be seen. 

\begin{lemma}
There exists $k$ such that 
$\det\Phi(x_k-1)\not=0$.
\end{lemma}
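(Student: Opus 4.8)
The plan is to reduce the statement to an explicit $2\times 2$ determinant computation and a non-cancellation argument for Laurent polynomials. First I would unwind the definition of $\Phi$. Since $\alpha(x_k)=t^{m_k}$ for some integer $m_k$ and $\rho(x_k)\in\SL(2;\F)$, the definition of $\rho_\ast\otimes\alpha_\ast$ gives $\Phi(x_k)=t^{m_k}\rho(x_k)$ as an element of $M(2;\F[t,t^{-1}])$, while $\Phi(1)=I$ is the $2\times 2$ identity matrix. Hence
\[
\Phi(x_k-1)=t^{m_k}\rho(x_k)-I.
\]

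Next I would invoke the untwisted lemma proved earlier in the Alexander polynomial section, which asserts the existence of an index $k$ with $\alpha_\ast(x_k)-1\neq 0$. This guarantees a generator $x_k$ with $\alpha(x_k)\neq 1$, that is, with exponent $m_k\neq 0$. I fix such a $k$ and compute the determinant of $\Phi(x_k-1)$ using the elementary identity valid for any $B\in M(2;\F)$ and any scalar $s$, namely $\det(sB-I)=s^2\det B - s\,\tr(B)+1$. Applying this with $B=\rho(x_k)$, so that $\det B=1$, and $s=t^{m_k}$ yields
\[
\det\Phi(x_k-1)=t^{2m_k}-\tr(\rho(x_k))\,t^{m_k}+1\in\F[t,t^{-1}].
\]

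The crux, and the step I would treat most carefully, is verifying that this Laurent polynomial is genuinely nonzero. Its three potential monomials sit in degrees $2m_k$, $m_k$, and $0$; because $m_k\neq 0$, the degrees $2m_k$ and $0$ are distinct, so the extreme term $t^{2m_k}$ (with coefficient $1$) cannot be cancelled against the constant term or the middle term. Thus $\det\Phi(x_k-1)\neq 0$, which proves the lemma. I would emphasize that this argument is robust against the trace vanishing and against the characteristic of $\F$: even if $\tr(\rho(x_k))=0$ one still has $t^{2m_k}+1\neq 0$, since its leading coefficient equals $1$ in any field. Everything else in the argument is routine, so this non-collision of exponents is really the only point requiring attention.
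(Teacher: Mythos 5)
Your proof is correct and follows exactly the route the paper intends: the paper omits the details, saying only that the lemma follows ``by similar arguments for Alexander polynomials,'' i.e.\ by choosing $k$ with $\alpha(x_k)=t^{m_k}$, $m_k\neq 0$, and your computation $\det\Phi(x_k-1)=t^{2m_k}-\tr(\rho(x_k))t^{m_k}+1\neq 0$ is precisely the intended filling-in of that gap. The observation that the leading monomial $t^{2m_k}$ cannot collide with the others when $m_k\neq 0$ is the right (and only) point needing care.
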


\begin{lemma}
$\displaystyle
({\det A_{\rho,k}}) ({\det\Phi(x_j-1)})
=
({\det A_{\rho,j}}) ({\det\Phi(x_k-1)})
$ for any $j,k$. 
\end{lemma}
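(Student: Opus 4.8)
The plan is to imitate the proof of the untwisted analogue (the deficiency-one Lemma relating $(\alpha_*(x_l)-1)\det A_k$ and $(\alpha_*(x_k)-1)\det A_l$), replacing $\alpha_*$ by $\Phi$ and keeping careful track of the fact that the matrix entries are now $2\times 2$ blocks that multiply on the \emph{right}. Throughout, write $M_{ij}=\Phi\!\left(\frac{\partial r_i}{\partial x_j}\right)$ for the $(i,j)$ block of $A_\rho$ and $V_j=\Phi(x_j-1)$; all determinants are those of the associated $2(n-1)\times 2(n-1)$ and $2\times 2$ matrices over $\F[t,t^{-1}]$, and $C_l$ denotes the block column $(M_{il})_i$.

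First I would record the fundamental relation among the blocks. Applying the ring homomorphism $\Phi$ to the fundamental formula of free differentials with $w=r_i$ gives $\Phi(r_i)-\Phi(1)=\sum_{j=1}^n M_{ij}V_j$. Since $\rho$ is a representation and $r_i=1$ in $G(K)$, one has $\Phi(r_i)=\rho(r_i)\,t^{\alpha(r_i)}=I_2$, so $\Phi(r_i)-\Phi(1)=0$ and hence
\[
\sum_{j=1}^n M_{ij}V_j=0 \qquad (1\le i\le n-1),
\]
an identity of $2\times 2$ matrices; in block form this says that $A_\rho$ times the block column vector with entries $V_1,\dots,V_n$ is zero, with the $V_j$ acting on the right.

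Now fix $j,k$. Starting from $A_{\rho,j}$ (block column $j$ deleted), I would right-multiply its $k$-th block column by $V_k$. Because right-multiplying the two scalar columns of one block by $V_k$ scales the determinant by $\det V_k$, the resulting matrix $B'$ satisfies $\det B'=(\det V_k)(\det A_{\rho,j})$. On the other hand, the relation above rewrites this new $k$-th block column as $M_{ik}V_k=-\sum_{l\neq k}M_{il}V_l$. Expanding $\det B'$ by multilinearity in the two scalar columns of block $k$ produces a double sum indexed by the choices $l,l'$ for the two columns; every term except $l=l'=j$ vanishes, since any term using a column built from $C_l$ with $l\notin\{j,k\}$ has that column lying in the $2$-dimensional span of the two columns of $C_l$ already present at position $l$, forcing linear dependence. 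The surviving term replaces block column $k$ by $-C_jV_j=C_j(-V_j)$; pulling out $\det(-V_j)=\det V_j$ and reordering the block columns back into the natural order of $A_{\rho,k}$ — a permutation of $2$-wide blocks, hence of sign $+1$ — gives $\det B'=(\det V_j)(\det A_{\rho,k})$. Comparing the two expressions for $\det B'$ yields $(\det A_{\rho,k})(\det V_j)=(\det A_{\rho,j})(\det V_k)$, which is the claim.

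The main obstacle, and the only genuine departure from the untwisted case, is the multilinear expansion: each block column is two scalar columns that must be expanded independently, so one generates many cross terms and must verify that all of them die. The clean device is the rank/dimension count above, which kills every term except $l=l'=j$. The evenness of the block size ($2\times 2$) is also what makes each block-column reordering contribute $+1$ and is precisely why this twisted lemma, unlike its untwisted counterpart, carries no $\pm$.
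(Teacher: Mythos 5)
Your argument is correct and is exactly the ``similar argument'' the paper invokes: it carries over the untwisted proof (scale a column by $\Phi(x_k-1)$, compute the determinant two ways using the fundamental formula), with the right care taken for right-multiplication by $2\times 2$ blocks, the vanishing of the cross terms in the multilinear expansion, and the even block size explaining the absence of $\pm$. No gaps.
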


\begin{remark}
The signs $\pm$ do not appear in the case of even dimensional unimodular representations. 
\end{remark}

From 
the above two lemmas, 
we can define the {twisted Alexander polynomial} of 
$G(K)$ associated $\rho:G(K)\to \SL(2;\F)$ to be a {rational expression} as follows. 
\begin{definition}
The twisted Alexander polynomial of $K$ for $\rho$ is defined by 
\[
\Delta_{K,\rho}(t)
=\frac{\det A_{\rho,k}}{\det \Phi(x_k-1)} 
\]
for any $k$ such that $\det\Phi(x_k-1)\not=0$. 
\end{definition}

This gives an invariant of $K$ with $\rho$. 
The following proposition can be proved by using similar arguments in the case of the Alexander polynomial. 

\begin{proposition}
\item
Up to $ct^{2s}\ (c\in\F, s\in\Z)$, 
$\Delta_{K,\rho}(t)$ is an invariant of $(G(K),\rho)$. 
Namely, it does not depend on choices of a presentation. 
\end{proposition}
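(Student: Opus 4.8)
The plan is to imitate, almost verbatim, the argument already given for the untwisted Alexander polynomial. By Tietze's theorem any two finite presentations of $G(K)$ are linked by a finite chain of the moves (I) and (II) and their inverses, so it is enough to verify that the rational expression $\det A_{\rho,k}/\det\Phi(x_k-1)$ is unchanged, up to a factor $ct^{2s}$, under a single application of (I) and of (II); the inverse moves then follow by reading the computations backwards. When a move alters the deficiency I would work, exactly as in the untwisted treatment, with the greatest common divisor $Q_k$ of the block-minors $\det A_{\rho,k}^{I}$, the deficiency-one formula being the special case $I=\{1,\dots,n\}\setminus\{k\}$. The single new ingredient relative to the classical case is that the matrix entries and the Fox-calculus coefficients now live in $M(2;\F[t,t^{-1}])$ and act on the left; throughout I would exploit the identity $\Phi(r_i)=I_2$, valid because $\rho(r_i)=I$ (as $r_i=1$ in $G(K)$) and $\alpha_*(r_i)=1$.

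For move (I), let $r=\prod_{k=1}^{p}w_k r_{i_k}^{\epsilon_k}w_k^{-1}$ be a consequence of the relators. Applying Fox's differentials, then $\Phi$, and cancelling every factor $\Phi(r_{i_k})=I_2$ exactly as in the scalar computation, I expect to obtain
\[
\Phi\!\left(\frac{\partial r}{\partial x_j}\right)
=\sum_{k=1}^{p}\epsilon_k\,\Phi(w_k)\,\Phi\!\left(\frac{\partial r_{i_k}}{\partial x_j}\right).
\]
Thus the new $2\times(2n)$ block row of $A_\rho'$ is a left $M(2;\F[t,t^{-1}])$-combination of the block rows already present, while the remaining block rows are untouched. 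Hence the extra relation is redundant at the level of the presented (twisted Alexander) module, its cokernel is unchanged, and the ideal generated by the relevant minors is therefore unchanged; this yields invariance of $Q_k$, while the denominator $\det\Phi(x_k-1)$ is not affected.

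For move (II) I would adjoin a generator $x=x_{n+1}$ and a relator $xw^{-1}$ with $w$ a word in $x_1,\dots,x_n$. Since $r_1,\dots,r_{n-1}$ do not involve $x_{n+1}$ their $x_{n+1}$-columns vanish, and a direct computation gives $\Phi(\partial(xw^{-1})/\partial x_{n+1})=I_2$, so that the twisted Alexander matrix acquires the block-triangular shape
\[
A_\rho''=\begin{pmatrix} A_\rho & 0\\ * & I_2\end{pmatrix}.
\]
Choosing any $k\le n$ with $\det\Phi(x_k-1)\neq0$ and deleting the $k$-th block column, the lower-right $I_2$ and the vanishing upper-right block make the determinant split as $\det A_{\rho,k}''=\det A_{\rho,k}\cdot\det I_2=\det A_{\rho,k}$, whereas $\det\Phi(x_k-1)$ is literally the same symbol in both presentations. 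The invariant is therefore preserved.

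The step I expect to be the main obstacle is the determinant and minor bookkeeping in move (I), where the coefficients are genuinely noncommutative $2\times2$ matrices. The key point to make precise is that left multiplication of a block row by a fixed matrix $M\in M(2;\F[t,t^{-1}])$ rewrites each of its two scalar rows as an $\F[t,t^{-1}]$-linear combination of the two scalar rows of the block it multiplies; only after this reduction does the classical fact that a dependent row leaves the maximal minors (equivalently the order of the cokernel) unchanged apply verbatim. Once this is settled, the leftover indeterminacy is exactly a unit of $\F[t,t^{-1}]$; because $\Phi$ carries a generator $x_i$ to $t\,\rho(x_i)$, whose determinant is $t^2$, every power of $t$ entering the minors and the denominator occurs squared, which produces the stated form $ct^{2s}$, and since $\rho$ takes values in $\SL(2;\F)$ the determinant $\det\circ\rho$ is trivial and no $\pm$-sign intervenes, in accordance with the preceding remark.
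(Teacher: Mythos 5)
Your proposal is correct and follows essentially the same route the paper intends: the paper dispatches this proposition with the remark that it ``can be proved by using similar arguments in the case of the Alexander polynomial,'' i.e.\ the Tietze-transformation argument of Section~3, which is exactly what you carry out, including the two genuinely new points (cancelling $\Phi(r_{i_k})=I_2$ in move (I) and interpreting left multiplication by a block as scalar row operations). Your closing observation that $\det\Phi(x_i)=t^{2\alpha(x_i)}$ forces the indeterminacy into the form $ct^{2s}$ with no sign is also the reason given in the paper's surrounding remarks.
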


Now we assume that we always take a Wirtinger presentation of $G(K)$. 
Hence we assume the deficiency is always one. 
In this case one has the more strict invariant as follows. 
However the deficiency is changed by the Tietze transformation $(\mathrm{I})$.

Now we introduce the strong Tietze transformations for a presentation of a group. 

\noindent
$\mathrm{(I_a)}$:
Replace a relator $r_i$ by its inverse $r_i^{-1}$.

\noindent
$\mathrm{(I_b)}$:
Replace a relator $r_i$ by its conjugate $w r_i w^{-1}$.

\noindent
$\mathrm{(I_c)}$:
Replace a relator $r_i$ by $r_i r_k(i\neq k)$.

\begin{remark}
The deficiency is not changed by $\mathrm{(I_a)},\mathrm{(I_b)},\mathrm{(I_c)},\mathrm{(II)}$ or their inverses. 
\end{remark}

One can prove the following. See \cite{Wada} for a proof.

\begin{proposition}
Any Wirtinger presentation of G(K) can be transformed 
to any other Wirtinger presentation of $G(K)$ 
by an application of a finite sequence of the Tietze transformations $\mathrm{(I_a)},\mathrm{(I_b)},\mathrm{(I_c)},(\mathrm{II})$ and their inverses.
\end{proposition}

By applying the above proposition and the same arguments in the section 3, 
one has the following.

\begin{proposition}
For any $K$,  
$\Delta_{K,\rho}(t)$ defined by a Wirtinger presentation of $G(K)$, 
it is an invariant of $(G(K),\rho)$ up to $t^{2s}\ (s\in\Z)$.
\end{proposition}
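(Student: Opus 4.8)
The plan is to invoke the preceding proposition (due to Wada) that any two Wirtinger presentations of $G(K)$ are connected by a finite sequence of the strong Tietze transformations $\mathrm{(I_a)}, \mathrm{(I_b)}, \mathrm{(I_c)}, \mathrm{(II)}$ and their inverses, and then to verify that $\Delta_{K,\rho}(t)$ changes by at most a factor $t^{2s}$ (with no extra constant $c\in\F$) under each single move. Since the strict invariance then propagates along the whole sequence, it suffices to analyze the effect of one move at a time on $\det A_{\rho,k}$, while keeping the denominator $\det\Phi(x_k-1)$ fixed by choosing $k$ among the original generators.

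For each move I would compute the new rows of the twisted Alexander matrix by Fox calculus and compare determinants, using two facts throughout: that every relator satisfies $\Phi(r_i)=I$ (the identity $2\times 2$ matrix over $\F[t,t^{-1}]$), since $r_i=1$ in $G(K)$; and that $\rho$ is unimodular, so $\det\Phi(w)=t^{2e}$ for a word $w$ of exponent sum $e$, because $\det\rho(w)=1$. Concretely, under $\mathrm{(I_a)}$ the identity $\frac{\partial r_i^{-1}}{\partial x_j}=-r_i^{-1}\frac{\partial r_i}{\partial x_j}$ shows that the $i$-th block row is multiplied by $-I$, scaling the determinant of the $2(n-1)\times 2(n-1)$ matrix by $(-1)^2=1$. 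Under $\mathrm{(I_c)}$, the formula $\frac{\partial (r_i r_k)}{\partial x_j}=\frac{\partial r_i}{\partial x_j}+r_i\frac{\partial r_k}{\partial x_j}$ together with $\Phi(r_i)=I$ turns the move into adding block row $k$ to block row $i$, which preserves the determinant. Under $\mathrm{(I_b)}$, a direct Fox computation (using $\Phi(w r_i w^{-1})=I$) collapses $\Phi\left(\frac{\partial (w r_i w^{-1})}{\partial x_j}\right)$ to $\Phi(w)\,\Phi\left(\frac{\partial r_i}{\partial x_j}\right)$, so the $i$-th block row is left-multiplied by the constant matrix $\Phi(w)$ and the determinant is scaled by $\det\Phi(w)=t^{2e}$, exactly the allowed ambiguity.

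The transformation $\mathrm{(II)}$ is handled as in Section 3: introducing $x_{n+1}$ and the relator $x_{n+1}w^{-1}$ produces a twisted Alexander matrix of block shape $\begin{pmatrix} A_\rho & 0 \\ \ast & I \end{pmatrix}$, the top-right block vanishing because the old relators do not involve $x_{n+1}$, and the bottom-right $2\times 2$ block being $\Phi\left(\frac{\partial (x_{n+1}w^{-1})}{\partial x_{n+1}}\right)=I$. Deleting a column block $k\le n$ and expanding the determinant along the last block column then yields $\det A''_{\rho,k}=\det A_{\rho,k}$, while the denominator $\det\Phi(x_k-1)$ is untouched; hence $\Delta_{K,\rho}$ is unchanged. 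Finally, because each move scales the invariant by a unit of the form $t^{2s}$ (or leaves it fixed), its inverse scales by the inverse unit, and composing a finite sequence multiplies by a single $t^{2s'}$, so the statement follows.

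The step I expect to be the main obstacle is $\mathrm{(I_b)}$: it is the only move that genuinely introduces a factor, and the whole point, namely obtaining $t^{2s}$ rather than the weaker $c\,t^{2s}$ of the previous proposition, hinges on the unimodularity $\det\rho(w)=1$ forcing that factor to be a pure power of $t^2$ with unit coefficient $1$. I would therefore record the exponent-sum bookkeeping $\det\Phi(w)=t^{2e}$ explicitly and confirm, as in the earlier remark for even-dimensional unimodular representations, that no sign $\pm$ survives.
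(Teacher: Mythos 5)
Your proposal is correct and follows essentially the same route as the paper, which simply invokes the preceding proposition on strong Tietze transformations together with ``the same arguments in section 3''; you have filled in exactly the details that argument requires (the row operations for $\mathrm{(I_a)},\mathrm{(I_c)}$, the factor $\det\Phi(w)=t^{2e}$ for $\mathrm{(I_b)}$ coming from unimodularity, and the block-triangular form for $\mathrm{(II)}$). Your closing observation that $\mathrm{(I_b)}$ is where the improvement from $c\,t^{2s}$ to $t^{2s}$ is won, and that $\det(-I)=1$ kills the sign in even dimensions, matches the paper's own remarks.
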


\begin{remark}
\noindent
\begin{itemize}
\item
The above holds up to $\pm t^{ls}$ for an $l$-dimensional representation. 
\item
On the other hand, 
by using only the theory of Reidemeister torsion, 
without the arguments in Tietze transformations,  
we can see $\Delta_{K,\rho}(t)$ is well-defined up to $t^{2s}\ (s\in\Z)$.
\end{itemize}
\end{remark}

In general the twisted Alexander polynomial 
$\Delta_{K,\rho}(t)$ depends on a representation $\rho$. 
However the following proposition can be proved easily. 

\begin{definition}
Two representations $\rho,\rho':G(K)\rightarrow \SL(2;\F)$ are called to be conjugate if there exists $P\in \SL(2;\F)$ such that $\rho(x)=P\rho'(x) P^{-1}$ for any $x\in G(K)$. 
\end{definition}

\begin{proposition}
If two representations $\rho$ and $\rho'$ are conjugate, 
then it holds 
$\Delta_{K,\rho}(t)=\Delta_{K,\rho'}(t)$ up to $t^s$. 
\end{proposition}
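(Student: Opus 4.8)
The plan is to show that passing from $\rho$ to a conjugate representation $\rho'$ conjugates the entire twisted Alexander matrix block by block, so that every determinant entering the definition of $\Delta_{K,\rho}(t)$ is left unchanged.

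First I would record how the ring homomorphism $\Phi$ changes. Let $\Phi'$ denote the homomorphism built exactly as $\Phi$ but using $\rho'$ in place of $\rho$. By hypothesis $\rho(x)=P\rho'(x)P^{-1}$ for all $x\in G(K)$, with $P\in\SL(2;\F)$. Since $\Phi(x_i)=t\,\rho(x_i)$ (on a Wirtinger presentation) and the scalar factor coming from $\alpha_\ast$ commutes with $P$, we get $\Phi(x_i)=P\,\Phi'(x_i)\,P^{-1}$ on each generator. Now both $\Phi$ and the map $w\mapsto P\,\Phi'(w)\,P^{-1}$ are ring homomorphisms $\Z F_n\to M(2;\F[t,t^{-1}])$ — the latter because conjugation by a fixed invertible matrix is a ring automorphism of $M(2;\F[t,t^{-1}])$ — and they agree on the free generators $x_1,\dots,x_n$, hence on all of $\Z F_n$. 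Applying this to the Fox derivatives yields $\Phi(\partial r_i/\partial x_j)=P\,\Phi'(\partial r_i/\partial x_j)\,P^{-1}$ for all $i,j$.

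Next I would translate this into the matrices. Regarding $A_\rho$ and $A_{\rho'}$ as $(n-1)\times n$ arrays of $2\times 2$ blocks, the relation above says the $(i,j)$ block of $A_\rho$ is the conjugate by $P$ of the $(i,j)$ block of $A_{\rho'}$. Equivalently, writing $\tilde P$ for the block-diagonal matrix with $P$ along the diagonal, $A_\rho=\tilde P\,A_{\rho'}\,\tilde P^{-1}$ as $2(n-1)\times 2n$ matrices over $\F[t,t^{-1}]$; deleting the $k$-th block column respects this, so $A_{\rho,k}=\tilde P_{\mathrm{row}}\,A_{\rho',k}\,\tilde P_{\mathrm{col}}^{-1}$ with both $\tilde P_{\mathrm{row}}$ and $\tilde P_{\mathrm{col}}$ block-diagonal consisting of $n-1$ copies of $P$. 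Taking determinants and using multiplicativity, the two factors $(\det P)^{\pm(n-1)}$ cancel (and in fact $\det P=1$), so $\det A_{\rho,k}=\det A_{\rho',k}$. The same computation applied to $\Phi(x_k-1)=P\,\Phi'(x_k-1)\,P^{-1}$ gives $\det\Phi(x_k-1)=\det\Phi'(x_k-1)$. Dividing, $\Delta_{K,\rho}(t)=\Delta_{K,\rho'}(t)$, and the stated $t^s$ indeterminacy is comfortably absorbed.

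The only point requiring care is the first step: one must be sure that conjugation genuinely commutes with the construction $\rho_\ast\otimes\alpha_\ast$ — that is, that the $t$-power produced by $\alpha_\ast$ is a central scalar untouched by $P$ — and that the ``agree on generators, hence everywhere'' argument is legitimate for homomorphisms into the noncommutative ring $M(2;\F[t,t^{-1}])$. Once that identity is in hand, the remainder is a routine determinant calculation, and no extra sign or unit factor appears beyond the indeterminacy already present in $\Delta_{K,\rho}(t)$.
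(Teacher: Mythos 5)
Your argument is correct, and in fact proves slightly more than the statement asks: conjugating $\rho$ conjugates every block of the twisted Alexander matrix by the same $P$, so $\det A_{\rho,k}=\det A_{\rho',k}$ and $\det\Phi(x_k-1)=\det\Phi'(x_k-1)$ hold on the nose, giving exact equality of $\Delta_{K,\rho}(t)$ and $\Delta_{K,\rho'}(t)$ for a fixed presentation rather than merely equality up to $t^s$. The paper does not supply a proof (it only remarks that the proposition "can be proved easily"), and what you have written is precisely the standard argument being alluded to; the two points you flag as needing care — that the scalar $\alpha_\ast(x_i)$ is central and hence unaffected by $P$, and that a ring homomorphism on $\Z F_n$ is determined by its values on the free generators — are exactly the right ones, and both are handled correctly.
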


\begin{example}
If $K$ is the trivial knot, we can take the presentation 
as $
G(K)=\langle x\rangle 
$ 
and the abelianization 
$\alpha:\langle x\rangle\ni x\mapsto t\in\langle t\rangle$. 
In this case, 
any representation $\rho:G(K)\rightarrow \SL(2;\C)$ is given by 
just one matrix $X=\rho(x)\in\SL(2;\C)$.
By definition, one has
\[
\begin{split}
\Delta_{K,\rho}(t)
&=\frac{1}{\det (t\rho(x)-I)}\\
&=\frac{1}{(\lambda_{1}t-1)(\lambda_{2}t-1)}
\end{split}
\]
where $I=\begin{pmatrix} 1 & 0\\ 0& 1\end{pmatrix}$ the identity matrix, 
and $\lambda_{1},\lambda_{2}$ are the eigenvalues of $\rho(x)$.
\end{example}

\begin{example}
Let $\rho={\mathbf 1}:G(K)\ni x\mapsto\begin{pmatrix}1 & 0\\ 0& 1\end{pmatrix}\in \SL(2;\C)$ 
be a 2 dimensional trivial representation. 
Then 
\[
{\mathbf 1}\otimes\alpha=\alpha\oplus\alpha:G(K)\ni x\mapsto 
\begin{pmatrix} 
\alpha(x) & 0\\
0 & \alpha(x)
\end{pmatrix}
\in \GL(2;\C(t)).
\]
Hence it can be seen  
\[
\begin{split}
\Delta_{K,{\mathbf 1}}(t)
&=\frac{\Delta_{K}(t)}{t-1}\cdot \frac{\Delta_{K}(t)}{t-1}\\
&=\left(\frac{\Delta_{K}(t)}{t-1}\right)^2
\end{split}
\]
\end{example}

\begin{example}
Let $\rho_a:G(K)\ni x\mapsto\begin{pmatrix}a & 0\\ 0& a^{-1}
\end{pmatrix}\in \SL(2;\C)(a\in\C\setminus\{0\})$ 
be an abelian representation. 
By direct computation, 
one has   
\[
\begin{split}
\Delta_{K,\rho_a}(t)
&=\frac{\Delta_{K}(at)}{at-1}
\cdot 
\frac{\Delta_{K}(a^{-1}t)}{a^{-1}t-1}\\
&=\left(
\frac{\Delta_{K}(at)}{t-a}\right)\left(\frac{\Delta_{K}(a^{-1}t)}{t-a^{-1}}\right).
\end{split}
\]
Therefore we obtain
\[
\begin{split}
\lim_{a\to 1}\Delta_{K,\rho_a}(t)
&=\Delta_{K,{\mathbf 1}}(t)\\
&=\left(\frac{\Delta_{K}(t)}{t-1}\right)^2.
\end{split}
\]
\end{example}

From these above examples, 
a twisted Alexander polynomial is not a polynomial in general. 

However, under a mild assumption on $\rho$, 
the twisted Alexander polynomial is a Laurent polynomial. 

\begin{proposition}[Kitano-Morifuji\cite{Kitano-Morifuji05}]
If $\rho:G(K)\to \SL(2;\F)$ is not an abelian representation, 
then 
$\Delta_{K,\rho}(t)$ is 
a Laurent polynomial with coefficients in $\F$. 
\end{proposition}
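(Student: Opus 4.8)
The plan is to show that the rational expression $\Delta_{K,\rho}(t)=\det A_{\rho,k}/\det\Phi(x_k-1)$ has no poles, so that after cancellation it lies in $\F[t,t^{-1}]$. First I would record that the denominator is explicit: since $\rho(x_k)\in\SL(2;\F)$ has eigenvalues $\lambda,\lambda^{-1}$, one has
\[
\det\Phi(x_k-1)=\det\bigl(t\rho(x_k)-I\bigr)=t^{2}-\tr\rho(x_k)\,t+1=(\lambda t-1)(\lambda^{-1}t-1),
\]
a nonzero Laurent polynomial with nonzero constant term for every $k$, so every index is admissible. Writing $\Delta_{K,\rho}=P/Q$ in lowest terms, polynomiality is equivalent to $Q$ being a unit. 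From the identity $\Delta_{K,\rho}\cdot\det\Phi(x_k-1)=\det A_{\rho,k}\in\F[t,t^{-1}]$, valid for every $k$, I get $Q\mid\det\Phi(x_k-1)$ for all $k$; and since in a Wirtinger presentation all $x_k$ are conjugate meridians, these denominators coincide. Hence a putative pole $t_0$ can occur only at $t_0=\lambda^{\pm 1}$, the reciprocals of the meridian eigenvalues, leaving finitely many candidates to rule out.

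Next I would pass to the specialized representation. Fix a candidate pole $t_0$ and consider
\[
\sigma:=\rho_{t_0}\colon G(K)\ni g\longmapsto t_0^{\alpha(g)}\rho(g)\in\GL(2;\bar\F),
\]
which is a genuine representation because $g\mapsto t_0^{\alpha(g)}$ lands in the centre. Through the identification of $\Delta_{K,\rho}$ with Reidemeister torsion, the order of the pole at $t_0$ is governed by the failure of acyclicity of $C_*(E(K);\bar\F^2_\sigma)$, concretely by $H_0(E(K);\bar\F^2_\sigma)$; a genuine pole forces $H_0(E(K);\bar\F^2_\sigma)\neq 0$, equivalently $\sigma$ admits a one-dimensional trivial quotient, i.e. the matrices $\sigma(x_k)=t_0\rho(x_k)$ share a common left eigenvector with eigenvalue $1$. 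Now a subspace is $\sigma$-invariant if and only if it is $\rho$-invariant, so when $\rho$ is irreducible $\sigma$ is again irreducible and has no such quotient; thus $H_0(E(K);\bar\F^2_\sigma)=0$ at every candidate $t_0$, no pole survives, and $\Delta_{K,\rho}(t)$ is a Laurent polynomial. This disposes of the irreducible case cleanly.

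The hard part is the reducible but non-abelian (metabelian) case, where $\sigma$ can genuinely acquire a trivial quotient at $t_0=\lambda^{\pm 1}$, so that the pole is real at the level of $H_0$. Here I would argue that it is nevertheless cancelled by a matching zero of the numerator $\det A_{\rho,k}$. The mechanism is that an upper-triangular non-abelian representation carries a nonzero twisted cocycle, and by de Rham's theorem quoted above such a cocycle exists only when the relevant specialization of the classical Alexander polynomial vanishes; that vanishing is exactly what produces the compensating zero of $\det A_{\rho,k}$ at $t_0$, forcing $Q$ to be a unit. For an abelian $\rho$ the cocycle vanishes, no such constraint is imposed, and the pole survives—consistent with the computations of $\Delta_{K,\mathbf 1}(t)$ and $\Delta_{K,\rho_a}(t)$ above, which are genuinely non-polynomial. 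The main obstacle is to make this cancellation precise: matching the exact order of the numerator's zero against the simple pole coming from $H_0$, which is where the interaction between the twisted and the untwisted Alexander invariants has to be controlled.
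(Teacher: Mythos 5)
The paper states this proposition without proof, merely citing \cite{Kitano-Morifuji05}, so there is no in-paper argument to compare you against; judged on its own terms, your outline isolates the right dichotomy (irreducible versus reducible non-abelian) and the right mechanisms, but it is not yet a proof, and the gap you flag at the end is genuine. Two points need attention. First, the step ``a genuine pole at $t_0$ forces $H_0(E(K);\bar{\F}^2_\sigma)\neq 0$'' is asserted, not proved: from the expression $\det A_{\rho,k}/\det\Phi(x_k-1)$ alone, a pole at $t_0$ only tells you that $t_0^{-1}$ is an eigenvalue of $\rho(x_k)$, not that the specialized representation has a trivial quotient. To get what you want you need the identification of Wada's invariant with the ratio of orders $\mathrm{ord}(H_1)/\mathrm{ord}(H_0)$ of the $\F[t,t^{-1}]$-coefficient homology (Kirk--Livingston, which this paper only alludes to), together with right-exactness of coinvariants to pass from a root of $\mathrm{ord}(H_0)$ to non-vanishing of $H_0$ of the specialization at that root. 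Granting that input, your irreducible case is fine.

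Second, the reducible non-abelian case is where the real content lies, and the order-matching you leave open can be closed as follows. Conjugate $\rho$ into upper-triangular form with meridian diagonal $(\lambda,\lambda^{-1})$. If $\lambda=\pm1$ the diagonal is $(\lambda,\lambda)$ and the resulting matrices all commute, so $\rho$ would be abelian; hence $\lambda\neq\pm1$ and the denominator $(\lambda t-1)(\lambda^{-1}t-1)$ has two \emph{distinct simple} roots $t=\lambda^{\mp1}$. The invariant line $L$ gives a short exact sequence of coefficient systems $0\to L\to V\to V/L\to 0$, and multiplicativity of torsion (the Mayer--Vietoris lemma of Section 4) yields $\Delta_{K,\rho}(t)=\pm\frac{\Delta_K(\lambda t)}{\lambda t-1}\cdot\frac{\Delta_K(\lambda^{-1}t)}{\lambda^{-1}t-1}$, whether or not the extension splits; non-abelianity enters only through de Rham's theorem, which forces $\Delta_K(\lambda^{2})=0$, hence also $\Delta_K(\lambda^{-2})=0$ by the symmetry $\Delta_K(t)=\Delta_K(t^{-1})$ up to $\pm t^s$. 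These two vanishings are exactly the statements that the numerator vanishes at $t=\lambda$ and at $t=\lambda^{-1}$, so both simple poles cancel and $\Delta_{K,\rho}(t)$ is a Laurent polynomial. Without excluding $\lambda=\pm1$ and without invoking the symmetry of $\Delta_K$ to get the vanishing at both points, the cancellation you appeal to is not established.
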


\subsection{Figure-eight knot}

Let us see the figure-eight knot $4_{1}$ again. 
The knot group $G(4_{1})$ has a presentation as 
\[
G(4_1)=\langle x,y\ |\ wx=yw \rangle
\ (w=x^{-1}yxy^{-1}).
\]

\begin{remark}
Here the generators $x$ and $y$ are conjugate by $w$. 
This is the point to treat $\SL(2;\C)$-representations for a 2-bridge knot. 
\end{remark}

For simplicity, we write $X$ to $\rho(x)$ for $x\in G(K)$. 
The next lemma can be seen by elementary arguments of the linear algebra. 
\begin{lemma}
Let $X,Y\in\SL(2,\C)$. If $X$ and $Y$ are conjugate and $XY\neq YX$, 
then there exists $P\in\SL(2;\C)$ such that 
\[
PXP^{-1}=\begin{pmatrix} s & 1\\0 & 1/s\end{pmatrix},\ 
PYP^{-1}=\begin{pmatrix} s & 0\\u & 1/s\end{pmatrix}.
\]
\end{lemma}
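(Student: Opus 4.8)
The plan is to prove this normal form by exploiting the two hypotheses in sequence: conjugacy of $X$ and $Y$ gives equal eigenvalues, and the non-commutativity rules out the degenerate diagonalizable case. First I would observe that since $X,Y\in\SL(2;\C)$ are conjugate, they share the same characteristic polynomial, hence the same trace and the same pair of eigenvalues, which I will call $s$ and $1/s$ (their product is $\det X=1$). The key reduction is to first conjugate the whole pair $(X,Y)$ by a single matrix $P_0\in\SL(2;\C)$ so that $X$ is put into a convenient shape; I expect the cleanest choice is to arrange $P_0XP_0^{-1}=\left(\begin{smallmatrix} s & 1\\ 0 & 1/s\end{smallmatrix}\right)$ when $s\neq\pm 1$, or its Jordan form when $s=\pm 1$. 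The remaining freedom is the centralizer of this normalized $X$, and I will use exactly that residual freedom to simultaneously normalize $Y$.

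The main steps I would carry out are as follows. After normalizing $X$, I would write $Y=\left(\begin{smallmatrix} a & b\\ c & d\end{smallmatrix}\right)$ with $\det Y=1$ and $\tr Y=\tr X=s+1/s$. The hypothesis $XY\neq YX$ then forces certain entries of $Y$ to be nonzero; in particular I expect the off-diagonal condition to guarantee $c\neq 0$, since if $Y$ were upper triangular it would commute with the upper-triangular $X$ (in the generic eigenvalue case). Next I would conjugate by a diagonal matrix $D=\left(\begin{smallmatrix} \lambda & 0\\ 0 & \lambda^{-1}\end{smallmatrix}\right)$, which is exactly the centralizer of the target form of $X$ up to the unipotent part, and choose $\lambda$ to rescale the lower-left entry $c$ of $Y$ to a prescribed value. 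Tracking how $D$ acts on $X=\left(\begin{smallmatrix} s & 1\\ 0 & 1/s\end{smallmatrix}\right)$, one sees the $(1,2)$-entry scales by $\lambda^2$, so I must verify that a single $\lambda$ can be chosen to fix $X$'s upper-right entry at $1$ while driving $Y$ into the desired lower-triangular-with-prescribed-entries shape; the trace and determinant constraints on $Y$ then pin down the remaining entries to give precisely $\left(\begin{smallmatrix} s & 0\\ u & 1/s\end{smallmatrix}\right)$.

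The hard part will be the case analysis on the eigenvalue $s$, specifically the parabolic case $s=\pm 1$ where $X$ is not diagonalizable and the "conjugate eigenvalues" data degenerates. In that situation the normal form $\left(\begin{smallmatrix} 1 & 1\\ 0 & 1\end{smallmatrix}\right)$ for $X$ must be handled separately, and I would check directly that $XY\neq YX$ still forces the lower-left entry of $Y$ to be nonzero so that the same diagonal-conjugation trick applies. I should also confirm at the end that the normalizing matrix can be taken in $\SL(2;\C)$ rather than merely $\GL(2;\C)$, which is automatic since scaling $P$ by a scalar does not affect the conjugation, so I may always rescale to unit determinant. Once these entry computations and the parabolic case are dispatched, the statement follows by direct verification that the resulting pair has the asserted form.
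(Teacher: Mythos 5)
The paper offers no argument for this lemma (it is dispatched as ``elementary arguments of the linear algebra''), so your proposal must be judged on its own, and as written it does not close: two intermediate claims are false and the final step is a genuine gap. First, two upper-triangular matrices need not commute --- for $s^2\neq 1$ the matrices $\left(\begin{smallmatrix} s & 1\\ 0 & 1/s\end{smallmatrix}\right)$ and $\left(\begin{smallmatrix} s & 0\\ 0 & 1/s\end{smallmatrix}\right)$ are upper triangular, conjugate, and do not commute --- so $XY\neq YX$ does not force $c\neq 0$ after you normalize $X$ (nor is $c\neq 0$ needed, since the target allows $u=0$). Second, the tension you flag with the diagonal conjugation is not something ``to verify'' but an impossibility: keeping the $(1,2)$-entry of the normalized $X$ equal to $1$ forces $\lambda^2=1$, i.e.\ $D=\pm I$, so the diagonal subgroup gives no residual freedom at all; the true residual freedom is the centralizer of $\left(\begin{smallmatrix} s & 1\\ 0 & 1/s\end{smallmatrix}\right)$, namely $\{\alpha I+\beta X\}\cap\SL(2;\C)$, which is not the diagonal torus. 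Third and fatally, trace and determinant do not ``pin down the remaining entries'': with $\tr Y=s+1/s$, $\det Y=1$ and the $(2,1)$-entry prescribed, $Y$ still ranges over a one-parameter family, and nothing in your argument makes the $(1,2)$-entry of $Y$ vanish --- yet that vanishing is the entire content of the lemma.

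The statement is nevertheless true, and the missing idea is the following use of $XY\neq YX$: \emph{some eigenvector of $Y$ fails to be an eigenvector of $X$} (otherwise every eigenline of $Y$ is an eigenline of $X$, which for conjugate matrices makes them simultaneously diagonalizable, or upper triangular with the same scalar diagonal in a common basis; either way they commute). Take such a $w_2$, let $s^{-1}$ be its $Y$-eigenvalue --- you must accept whichever eigenvalue this turns out to be, so the roles of $s$ and $1/s$ cannot be fixed in advance as your outline does (for $X=\left(\begin{smallmatrix} s & 0\\ 0 & 1/s\end{smallmatrix}\right)$, $Y=\left(\begin{smallmatrix} s & 0\\ 1 & 1/s\end{smallmatrix}\right)$ only one labeling works) --- and set $w_1=(X-s^{-1}I)w_2$. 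Then $w_1\neq 0$, Cayley--Hamilton gives $(X-sI)w_1=0$, and $w_1,w_2$ are independent because $w_1$ is an eigenvector of $X$ while $w_2$ is not. In the basis $(w_1,w_2)$, rescaled so the change of basis lies in $\SL(2;\C)$, the matrix of $X$ is $\left(\begin{smallmatrix} s & 1\\ 0 & 1/s\end{smallmatrix}\right)$ by construction, and the matrix of $Y$ has second column $(0,1/s)^{T}$, hence $(1,1)$-entry $s$ by the determinant condition; this also handles the parabolic case $s=\pm1$ uniformly. Your two-step strategy can be repaired --- after normalizing $X$, use its full centralizer to move onto $e_2$ an eigenline of $Y$ that is not an eigenline of $X$ --- but the existence of that eigenline and the freedom to exchange $s$ and $1/s$ are exactly the ingredients your proposal lacks.
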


For any irreducible representation $\rho$, 
we may assume that its representative of the conjugacy class 
which contains $\rho$ is given by
\[
\rho_{s,u}:G(4_1)\rightarrow \SL(2;\C)
\]
such that 
\[
\begin{split}
\rho_{s,u}(x)&=X=\begin{pmatrix} s & 1\\0 & 1/s\end{pmatrix},\\
\rho_{s,u}(y)&=Y=\begin{pmatrix} s & 0\\u & 1/s\end{pmatrix}
\end{split}
\]
where 
$s,u\in \C\setminus\{0\}$ . 

\begin{remark}
Because 
\[
\tr (X)=s+\frac{1}{s},\ \tr (X^{-1}Y)=2-u,
\]
then it is seen that the space of the conjugacy classes of the irreducible representations can be parametrized by the traces of $X,X^{-1}Y$.
\end{remark}

We compute the matrix 
\[
R=WX-YW=\rho(w)\rho(x)-\rho(y)\rho(w)
\]
to get the defining equations 
of the space of the conjugacy classes of irreducible representations.

One has each entry of $R=(R_{ij})$:
\begin{itemize}
\item
$R_{11}=R_{22}=0$,
\item
$R_{12}=3-\frac{1}{s^2}-s^2-3 u+\frac{u}{s^2}+s^2 u+u^2$,
\item
$R_{21}=-3 u+\frac{u}{s^2}+s^2 u+3 u^2-\frac{u^2}{s^2}-s^2 u^2-u^3=-uR_{12}$.
\end{itemize}
Hence $R_{12}=0$ is the equation of the space of conjugacy classes of the irreducible representations. 
 
This equation 
\[
3-\frac{1}{s^2}-s^2-3 u+\frac{u}{s^2}+s^2 u+u^2=0
\]
can be solved in $u$:
\[
u=\frac{-1+3 s^2-s^4\pm \sqrt{1-2 s^2-s^4-2 s^6+s^8}}{2 s^2}.
\]

By applying $\displaystyle\frac{\partial}{\partial y}$ to $wx-yw$, 
one has
\[
\begin{split}
\frac{\partial (wx-yw)}{\partial y}
&=\frac{\partial w}{\partial y}-1-y\frac{\partial w}{\partial y}\\
&=(1-y)\frac{\partial w}{\partial y}-1\\
&=(1-y)\frac{\partial}{\partial y}(x^{-1}yxy^{-1})-1\\
&=(1-y)(x^{-1}-wx)-1.
\end{split}
\]
Therefore we obtain 
\[
\begin{split}
A_{\rho,1}
&=\Phi\left(\frac{\partial (wx-yw)}{\partial y}\right)\\
&=(\Phi(1)-\Phi(y))(\Phi(x^{-1})-\Phi(w)\Phi(x))-\Phi(1)\\
&=(I-tY)(t^{-1}X^{-1}-tWX)-I.
\end{split}
\]
Note that $\Phi(w)=W$ because $\alpha(w)=1$. 

Substituting 
\[
u=\frac{-1+3 s^2-s^4\pm \sqrt{1-2 s^2-s^4-2 s^6+s^8}}{2 s^2}
\] 
to each entry and doing direct computations,  
the numerator is given as 
\[
\det A_{\rho,1}=
\frac{1}{t^2}
-\frac{3}{s t}-\frac{3 s}{t}
+6+\frac{2}{s^2}+2 s^2
-\frac{3 t}{s}-3 s t+t^2
\]
Remark that it does not depend on two choices of $u$. 

On the other hand, one has 
\[\det(tX-I)=t^2-\left(s+\frac1s\right)t+1.
\]

Finally we obtain 
\[
\begin{split}
\Delta_{4_1,\rho_{s,u}}(t)
&=\frac{\det A_{\rho,1}}{\det(tX-I)}\\
&=\frac{1}{t^2}-\frac{2 \left(1+s^2\right)}{s t}+1\\
&=\frac{1}{t^2}\left(t^2-2\left(s+\frac{1}{s}\right) t+1\right)\\
&=\frac{1}{t^2}(t^2-2(\tr (X)) t+1).
\end{split}
\]

\begin{remark}
We mention two things. 
The reason for the second one is explained in section 7.
\begin{itemize}
\item
$\Delta_{4_1,\rho_{s,u}}(t)$ is a Laurent polynomial 
because $\rho_{s,u}$ is not abelian. 
\item
$\Delta_{4_1,\rho_{s,u}}(t)$ is monic (explain later) 
because $4_1$ is fibered.
\end{itemize}
\end{remark}

\subsection{Torus knots}

We can consider that 
$\D_{K,\rho}(t)$ 
is a Laurent polynomial (up to some powers of $t$) valued function 
on the space of conjugacy classes 
of $\SL(2;\C)$-irreducible representations. 
In general a twisted Alexander polynomial is not constant on this space. 
For example, in the case of the figure-eight knot as we discussed above, 
it is depending on the trace of the image of the meridian.

On the other hand, the following holds 
for a $(p,q)$-torus knot $T(p,q)\subset S^3$. 

\begin{theorem}[Kitano-Morifuji\cite{Kitano-Morifuji12}]
For any $(p,q)$-torus knot $T(p,q)$, 
$\Delta_{T(p,q),\rho}(t)$ is a locally constant function 
on each connected component 
of the space of conjugacy classes of $\SL(2;\C)$-irreducible representations. 
\end{theorem}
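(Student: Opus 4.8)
The plan is to exploit the fact that a $(p,q)$-torus knot group has an exceptionally rigid representation variety. Recall the standard presentation
\[
G(T(p,q))=\langle a,b \mid a^{p}=b^{q}\rangle,
\]
in which the central element $z=a^{p}=b^{q}$ generates the center of $G(T(p,q))$. First I would record the structural consequence for an irreducible $\rho\colon G(T(p,q))\to\SL(2;\C)$: since $z$ is central, $\rho(z)$ commutes with the irreducible image, so by Schur's lemma $\rho(z)=\pm I$. This forces $\rho(a)^{p}=\pm I$ and $\rho(b)^{q}=\pm I$, so the eigenvalues of $\rho(a)$ are fixed $2p$-th roots of unity and those of $\rho(b)$ are fixed $2q$-th roots of unity. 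Consequently $\tr\rho(a)$ and $\tr\rho(b)$ take only finitely many values, and each choice of an admissible pair of traces pins down a single connected component of the irreducible representation variety. The upshot is that \emph{every irreducible representation in a fixed component has the same values of $\tr\rho(a)$ and $\tr\rho(b)$}; these discrete invariants are exactly the coordinates that label the components.

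The second step is to show that $\Delta_{T(p,q),\rho}(t)$ depends only on this finite conjugacy data. I would compute the twisted Alexander polynomial from the two-generator one-relator presentation above, using the single relator $r=a^{p}b^{-q}$. Applying Fox's free differentials and the formula for $\partial(g^{k})/\partial x_{j}$ from the lemma in Section~2, one gets
\[
\frac{\partial r}{\partial a}=\frac{a^{p}-1}{a-1},\qquad
\frac{\partial r}{\partial b}=-a^{p}b^{-q}\,\frac{b^{q}-1}{b-1},
\]
so after applying $\Phi=\rho_{\ast}\otimes\alpha_{\ast}$ the twisted Alexander matrix entries are expressions built entirely from $\Phi(a)$, $\Phi(b)$ and their powers up to $p$ and $q$. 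Taking the determinant of $A_{\rho,k}$ and dividing by $\det\Phi(x_{k}-1)$, one obtains a rational expression in $t$ whose coefficients are polynomials in the matrix entries of $\rho(a)$ and $\rho(b)$ that are manifestly invariant under conjugation. The key point is then to argue that these coefficients are in fact \emph{symmetric functions of the eigenvalues of $\rho(a)$ and of $\rho(b)$}, hence functions of $\tr\rho(a)$ and $\tr\rho(b)$ alone. Since Proposition on conjugation invariance already tells us $\Delta_{T(p,q),\rho}$ is unchanged under conjugacy, it suffices to evaluate on a diagonal-plus-unipotent normal form as in the figure-eight computation and check that the dependence collapses to the traces.

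The main obstacle I anticipate is the last reduction: verifying that, after dividing out $\det\Phi(x_{k}-1)$, the surviving coefficients genuinely lose all dependence on the off-diagonal (unipotent) parameter and reduce to functions of the traces. In the figure-eight case this happened almost miraculously, with the $u$-dependence canceling between numerator and denominator; for torus knots one must show the analogous cancellation occurs because $\rho(a)$ and $\rho(b)$ each satisfy a fixed characteristic polynomial determined by their root-of-unity eigenvalues. I would handle this by writing the characteristic identity $\rho(a)^{2}=\tr\rho(a)\,\rho(a)-I$ (and similarly for $b$) and using it repeatedly to rewrite every power $\rho(a)^{i}$, $\rho(b)^{j}$ appearing in the Fox derivatives as an $\F$-linear combination of $I$ and $\rho(a)$ (respectively $I$ and $\rho(b)$) with coefficients that are polynomials in the traces; this is the Cayley--Hamilton reduction. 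Once all entries are so expressed, the determinant is a polynomial in $t$ whose coefficients lie in $\C[\tr\rho(a),\tr\rho(b)]$, which is constant on each component since the traces are. The final step is simply to note that the admissible trace pairs are finite and indexed by the components, so $\Delta_{T(p,q),\rho}(t)$ is locally constant as claimed.
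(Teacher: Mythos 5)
Your proposal is correct and follows essentially the same route as the paper: the same one-relator presentation $\langle a,b\mid a^{p}=b^{q}\rangle$, the same use of centrality plus Schur's lemma to force the eigenvalues of $\rho(a)$ and $\rho(b)$ to be fixed roots of unity (so the traces are locally constant), and the same Fox-calculus computation reducing everything to $\frac{\partial r}{\partial a}=1+a+\cdots+a^{p-1}$ and $\det\Phi(b-1)$. The cancellation you single out as the main obstacle is actually automatic here and requires no Cayley--Hamilton bookkeeping: the relevant Fox derivative is a polynomial in the single matrix $\Phi(a)=t^{q}X$, so its determinant is simply the product of its values at the two eigenvalues $\lambda^{\pm1}$ of $X$, and likewise $\det\Phi(b-1)=1-(\mu+\mu^{-1})t^{p}+t^{2p}$ depends only on $\mu$ --- no off-diagonal parameter ever enters, which is exactly how the paper writes the closed-form answer.
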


Let $G(p,q)=\langle x,y\ |\ x^{p}=y^{q}\rangle$ be the knot group of $T(p,q)$. 
Let $m\in G(p,q)$ be the meridian given by $x^{-r}y^{s}$ where $ps-qr=1$ 
and 
$z=x^{p}=y^{q}$ a center element of the infinite order. 
Now let $\rho:G(p,q)\rightarrow \SL(2;\C)$ be an irreducible representation. 

Recall that the center of $\SL(2;\C)$ is $\{\pm I\}$. 
Hence one has $Z=\rho(z)=\pm I$ by the irreducibility of $\rho$.
Then this implies 
\[
X^{p}=\pm I, Y^{q}=\pm I.
\]

Here we may choice 
the eigenvalues of X and Y as 
\begin{itemize}
\item
$\lambda^{\pm 1}=e^{\pm\sqrt{-1}\pi a/p}$ such that $0<a<p$, 
\item
$\mu^{\pm 1}=e^{\pm\sqrt{-1}\pi b/q}$ such that $0<b<q$.
\end{itemize}
Now we get 
\[
\tr (X)=2\cos \frac{\pi a}{p},
\tr (Y)=2\cos\frac{\pi b}{q},
\]
and further 
\[
X^{p}=(-I)^{a},Y^{q}=(-I)^{b}.
\]

\begin{remark}
In any case one has $X^{2p}=Y^{2q}=I$.
\end{remark} 

\begin{proposition}[Johnson\cite{Johnson}]
Any conjugacy class of irreducible representations is uniquely determined for a given triple of traces 
\[
(\tr (X),\tr (Y),\tr (M))
\]
such that 
\begin{itemize}
\item
$\tr (X)=2\cos \frac{\pi a}{p}$,
\item
$\tr (Y)=2\cos\frac{\pi b}{q}$, 
\item
$Z=(-I)^{a}$, 
\item
$\tr (M)\not= 2\cos\pi(\frac{ra}{p}\pm\frac{sb}{q})$, 
\item
$0<a<p$, $0<b<q$, $a\equiv b$ mod 2,
\item
$r,s\in\Z$ such that $pq-rs=1$.
\end{itemize}
\end{proposition}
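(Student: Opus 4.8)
The plan is to reduce the statement to the classical trace-coordinate theorem of Fricke and then to show that, under the constraints imposed by the torus-knot relations, the meridian trace $\tr(M)$ carries exactly the same information as $\tr(XY)$, while the excluded values single out the reducible locus.

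First I would record that, since $0<a<p$ and $0<b<q$, neither $\tr(X)=2\cos(\pi a/p)$ nor $\tr(Y)=2\cos(\pi b/q)$ equals $\pm 2$; together with $X^{2p}=Y^{2q}=I$ this forces $X$ and $Y$ to be diagonalizable with the prescribed eigenvalues $\lambda^{\pm 1}=e^{\pm\sqrt{-1}\pi a/p}$ and $\mu^{\pm 1}=e^{\pm\sqrt{-1}\pi b/q}$. Conjugating so that $X=\mathrm{diag}(\lambda,\lambda^{-1})$, the residual conjugation freedom is by diagonal matrices. Writing $Y=\begin{pmatrix}\alpha&\beta\\ \gamma&\delta\end{pmatrix}$, the three traces $\tr(X)=\lambda+\lambda^{-1}$, $\tr(Y)=\alpha+\delta$, and $\tr(XY)=\lambda\alpha+\lambda^{-1}\delta$ form a linear system for $(\alpha,\delta)$ whose determinant $\lambda^{-1}-\lambda$ is nonzero, so $(\alpha,\delta)$, and hence $\beta\gamma=\alpha\delta-1$, are determined. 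Since irreducibility forces $\beta\gamma\neq 0$, the pair $(X,Y)$ is pinned down up to the diagonal conjugation that rescales $(\beta,\gamma)$; this is the Fricke conclusion that an irreducible representation is determined up to conjugacy by $(\tr X,\tr Y,\tr XY)$.

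Next I would express $\tr(M)$ as an affine function of $w:=\tr(XY)$. Using the Cayley--Hamilton relation for $Y$, I can write $Y^{s}=p_{s}Y+q_{s}I$ with $p_{s}=(\mu^{s}-\mu^{-s})/(\mu-\mu^{-1})$, so that $\tr(M)=\tr(X^{-r}Y^{s})=p_{s}\,\tr(X^{-r}Y)+q_{s}\,\tr(X^{-r})$. Because $X^{-r}=\mathrm{diag}(\lambda^{-r},\lambda^{r})$, solving the earlier linear system gives $\tr(X^{-r}Y)=\lambda^{-r}\alpha+\lambda^{r}\delta$, which is affine in $w$ with slope $(\lambda^{-r}-\lambda^{r})/(\lambda-\lambda^{-1})$; hence $\tr(M)$ is affine in $w$ with total slope proportional to $p_{s}(\lambda^{-r}-\lambda^{r})$. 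The relation $ps-qr=1$ yields $\gcd(r,p)=\gcd(s,q)=1$, and combined with $0<a<p$, $0<b<q$ this gives $\lambda^{2r}\neq 1$ and $\mu^{2s}\neq 1$, so the slope is nonzero and $\tr(M)$ determines $w=\tr(XY)$.

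Finally I would identify the excluded values as the reducibility locus. A representation with these eigenvalues is reducible precisely when $X$ and $Y$ share an eigenvector; putting both in upper-triangular form, the diagonal of $M=X^{-r}Y^{s}$ becomes $(\lambda^{-r}\mu^{\pm s},\lambda^{r}\mu^{\mp s})$, whence $\tr(M)=2\cos\pi(ra/p\pm sb/q)$. Thus the hypothesis $\tr(M)\neq 2\cos\pi(ra/p\pm sb/q)$ is exactly the condition that $\rho$ be irreducible. Combining the three steps: given a trace triple satisfying all the listed constraints, the nonzero slope recovers $\tr(XY)$ uniquely, Fricke's theorem then yields a single conjugacy class, and the inequality guarantees it is irreducible. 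The main obstacle I expect is the bookkeeping in the second step---verifying the nonvanishing of the slope from the arithmetic of $ps-qr=1$ together with the eigenvalue ranges---and matching the reducible traces in the third step to the precise values $2\cos\pi(ra/p\pm sb/q)$.
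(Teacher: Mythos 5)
The paper offers no proof of this proposition: it is quoted from Johnson's unpublished lecture notes, so there is nothing internal to compare your argument against. On its own merits, your proof is correct and is the standard route: Fricke's trace-coordinate theorem for irreducible two-generator $\SL(2;\C)$ representations, followed by the observation that $\tr(M)$ is an affine function of $\tr(XY)$ with nonzero slope, so that the triple $(\tr X,\tr Y,\tr M)$ carries the same information as $(\tr X,\tr Y,\tr XY)$. Your verification of the nonvanishing of the slope is the right one: $ps-qr=1$ gives $\gcd(r,p)=\gcd(s,q)=1$, and $0<a<p$, $0<b<q$ then rule out $\lambda^{2r}=1$ and $\mu^{2s}=1$. (Note that you correctly work with $ps-qr=1$, which matches the meridian formula $m=x^{-r}y^{s}$ given earlier in the text; the condition ``$pq-rs=1$'' printed in the proposition is evidently a typo.) Two small points you could tighten. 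First, in the last step you assert that $\tr(M)\neq 2\cos\pi(\frac{ra}{p}\pm\frac{sb}{q})$ is \emph{exactly} irreducibility, but you only verify that reducible representations land on the excluded values; the converse (that a representation whose meridian trace equals one of these values must be reducible) follows either from the injectivity of $w\mapsto\tr(M)$ together with the fact that the two corresponding values $w=\lambda\mu^{\pm 1}+\lambda^{-1}\mu^{\mp 1}$ are precisely the solutions of $\tr\rho([x,y])=2$, or directly from the Culler--Shalen irreducibility criterion; one sentence would close this. Second, the relation $x^{p}=y^{q}$ plays no role in your uniqueness argument, which is fine, but it is worth remarking (as you implicitly do via diagonalizability and $a\equiv b \bmod 2$) that any pair $(X,Y)$ with the prescribed traces automatically satisfies $X^{p}=(-I)^{a}=(-I)^{b}=Y^{q}$, so the Fricke argument for the free group does apply verbatim to $G(p,q)$.
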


\begin{corollary}
\noindent
\begin{itemize}
\item
A pair of $(a,b)$ determines a connected component of conjugacy classes. 
\item
Each connected component of the conjugacy classes can be parametrized 
by $\tr (M)\in
\C\setminus
\left\{2\cos\pi\left(\frac{ra}{p}\pm\frac{sb}{q}\right)\right\}$ 
under fixing $(a,b)$. 
\end{itemize}
\end{corollary}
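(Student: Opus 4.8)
The plan is to establish the corollary as a direct consequence of Johnson's Proposition together with the parametrization of the representation space already set up in the excerpt. The corollary makes two assertions, and I would treat them in turn, using the correspondence between conjugacy classes of irreducible $\SL(2;\C)$-representations and triples of traces $(\tr(X),\tr(Y),\tr(M))$ guaranteed by Johnson's result.

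First I would prove that a pair $(a,b)$ determines a connected component. By Johnson's Proposition, the discrete data controlling a conjugacy class are the integers $a,b$ with $0<a<p$, $0<b<q$, and $a\equiv b \pmod 2$, since these fix $\tr(X)=2\cos\frac{\pi a}{p}$, $\tr(Y)=2\cos\frac{\pi b}{q}$, and the sign $Z=(-I)^a$. The only remaining continuous parameter is $\tr(M)$. Thus once $(a,b)$ is fixed, the conjugacy classes are in bijection with admissible values of $\tr(M)$, which range over a connected set (see below); conversely, varying $(a,b)$ changes the discrete invariants $\tr(X)$ and $\tr(Y)$, which cannot deform continuously while staying on the discrete lattice $\{2\cos\frac{\pi a}{p}\}$. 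Hence distinct $(a,b)$ give disjoint pieces, and each fixed $(a,b)$ gives a single connected piece, so the connected components are precisely indexed by the pairs $(a,b)$.

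Second I would identify each component with its $\tr(M)$-parametrization. With $(a,b)$ fixed, Johnson's uniqueness statement says the conjugacy class is determined by $\tr(M)$ alone, subject to the exclusion $\tr(M)\neq 2\cos\pi\!\left(\frac{ra}{p}\pm\frac{sb}{q}\right)$. These excluded values are exactly where the representation fails to be irreducible (the reducible locus). Therefore the component is in bijection with $\C$ minus this finite set of points, i.e. with $\C\setminus\left\{2\cos\pi\left(\frac{ra}{p}\pm\frac{sb}{q}\right)\right\}$, which is the claimed parametrization.

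The main obstacle I anticipate is verifying that the $\tr(M)$-locus is genuinely connected and that the excluded trace values correspond precisely to reducibility rather than to some other degeneration. Removing finitely many points from $\C$ leaves a connected set, so connectedness of each component follows once I confirm the exclusion set is finite; this reduces to checking that for fixed $(a,b)$ only the two values $2\cos\pi\left(\frac{ra}{p}+\frac{sb}{q}\right)$ and $2\cos\pi\left(\frac{ra}{p}-\frac{sb}{q}\right)$ are forbidden, which is immediate from Johnson's statement. The remaining care is to ensure no two admissible $\tr(M)$ values yield conjugate representations, but this is exactly the uniqueness half of Johnson's Proposition, so the corollary follows without further computation.
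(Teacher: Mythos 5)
Your proposal is correct and is essentially the derivation the paper intends: the paper states this corollary without proof as an immediate consequence of Johnson's Proposition, and your argument (discrete invariants $(a,b)$ separate the pieces, while the remaining continuous parameter $\tr(M)$ ranges over $\C$ minus finitely many excluded values, which is connected) is exactly the natural unpacking of that. The only point left implicit is that the bijection with the $\tr(M)$-plane is a homeomorphism onto the component in the character variety, which is standard for $\SL(2;\C)$ trace coordinates.
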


Here we give a proof that twisted Alexander polynomial is constant on each connected component. 
\begin{proof}

We use this parametrization to compute twisted Alexander polynomials. 
By applying Fox's differential to $r=x^{p}y^{-q}$, 
one has
\[
\frac{\partial r}{\partial x}=1+x+\cdots +x^{p-1}.\]

Remark that $\alpha:G(K)\rightarrow \langle t\rangle$ 
is defined by 
$\alpha(x)=t^q,\alpha(y)=t^p$, and $\alpha(m)=t$. 

By the definition, we obtain
\[
\small
\begin{split}
\Delta_{T(p,q),\rho}(t)
=&\frac{\Phi(\frac{\partial r}{\partial x})}{\Phi(y-1)}\\
=&\frac{\det(I+t^q X\cdots +t^{(p-1)q}X^{p-1})}{\det(t^p Y-I)}\\
=&\frac
{(1+\lambda t^{q}+\cdots+\lambda^{p-1}t^{(p-1)q})
(1+\lambda^{-1} t^{q}+\cdots+\lambda^{-(p-1)}t^{-(p-1)q})}
{1-(\mu+\mu^{-1})t^{p}+t^{2p}}.
\end{split}
\]
Hence it can be seen 
$\Delta_{T(p,q),\rho}(t)$ is determined by 
$(p,q)$ and eigenvalues 
$(\lambda,\mu)=(e^{\sqrt{-1}\pi a/p},e^{\sqrt{-1}\pi b/q})$ 
such that $0<a<p, 0<b<q$. 
This means it cannot be varied locally. 
\end{proof}

Now we consider the case of $(2,q)$-torus knot for the simplicity. 
Here the connected components consists of $\frac{q-1}{2}$ components parametrized by odd integer $b$ with $0<b<q$. 
\begin{theorem}[Kitano-Morifuji\cite{Kitano-Morifuji12}]
Twisted Alexander polynomial of $T(2,q)$ is given by 
$$
\Delta_{T(2,q),\rho_b}(t)
=
\left(t^2+1\right)
\prod_{0< k < q,~k:\mathrm{odd},~k\not= b}
\left(
t^2-\xi_k
\right)
\left(
t^2-\bar{\xi}_k
\right),
$$
where 
$\xi_k=\exp\left(\sqrt{-1}\pi k/q\right)$. 
\end{theorem}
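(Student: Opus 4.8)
The plan is to begin from the rational expression for $\Delta_{T(p,q),\rho}(t)$ obtained just above and specialize it to $p=2$. With $p=2$ the numerator is the single product $(1+\lambda t^{q})(1+\lambda^{-1}t^{q})$ and the denominator is $1-(\mu+\mu^{-1})t^{2}+t^{4}$, where $\lambda^{\pm 1}$ and $\mu^{\pm 1}$ denote the eigenvalues of $X$ and $Y$.

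First I would pin down the eigenvalues. The constraint $0<a<p=2$ forces $a=1$, so $\lambda=e^{\sqrt{-1}\pi/2}$ and hence $\lambda+\lambda^{-1}=0$; the parity condition $a\equiv b\pmod 2$ then makes $b$ odd. Consequently the numerator collapses to
\[
(1+\lambda t^{q})(1+\lambda^{-1}t^{q})=1+(\lambda+\lambda^{-1})t^{q}+t^{2q}=1+t^{2q},
\]
while the denominator, using $\mu=e^{\sqrt{-1}\pi b/q}$, equals $1-2\cos(\pi b/q)t^{2}+t^{4}=(t^{2}-\xi_b)(t^{2}-\bar{\xi}_b)$.

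Next I would factor the numerator over $\C$. Writing $s=t^{2}$, the numerator is $s^{q}+1$, whose roots are the solutions of $s^{q}=-1$, namely $\xi_k=e^{\sqrt{-1}\pi k/q}$ for odd $k$ with $0<k<2q$. Because $T(2,q)$ is a knot we have $\gcd(2,q)=1$, so $q$ is odd; thus $k=q$ contributes the single real root $\xi_q=-1$, and the remaining roots organize into conjugate pairs $\{\xi_k,\bar{\xi}_k\}$ with $k$ odd and $0<k<q$ (using $\bar{\xi}_k=\xi_{2q-k}$). Since both $s^{q}+1$ and the resulting product are monic of degree $q$, this gives
\[
s^{q}+1=(s+1)\prod_{0<k<q,~k:\mathrm{odd}}(s-\xi_k)(s-\bar{\xi}_k).
\]

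Finally, because $b$ is odd with $0<b<q$, the denominator factor $(s-\xi_b)(s-\bar{\xi}_b)$ occurs among the terms of this product and cancels, yielding the claimed formula after substituting back $s=t^{2}$. The only delicate point is the root bookkeeping for $s^{q}+1$: one must check that exactly the single real root $-1$ occurs and that all other roots pair up as stated, which is precisely where the hypothesis that $q$ is odd is used.
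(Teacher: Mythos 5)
Your proposal is correct and follows the route the paper sets up: it specializes the general rational expression $\det(I+t^qX+\cdots+t^{(p-1)q}X^{p-1})/\det(t^pY-I)$ derived in the preceding proof to $p=2$, uses $a=1$ (hence $\lambda+\lambda^{-1}=0$ and $b$ odd) to get numerator $1+t^{2q}$, and factors $s^q+1$ over $\C$ to cancel the denominator factor $(t^2-\xi_b)(t^2-\bar\xi_b)$. The root bookkeeping (single real root $-1$ since $q$ is odd, remaining roots pairing as $\xi_k,\bar\xi_k$ with $k$ odd, $0<k<q$) is handled correctly, so nothing is missing.
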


\begin{example}
In particular, for the trefoil knot $3_1=T(2,3)$,
there is just one connected component. 
For any irreducible representation $\rho$, we have 
\[
\begin{split}
\Delta_{K,\rho}(t)
&=
\frac{t^6+1}{t^4-t^2+1}\\
&=
t^2+1.
\end{split}
\]
\end{example}

\subsection{Reidemeister torsion, orders, and an obstruction}

Here we mention the relation of the twisted Alexander polynomial with Reidemeister torsion, an order ideal and an obstruction of a representation. 

For simplicity, we treat a representation over $\C$. 
By taking a tensor product of 
\[
\bar{\alpha}:G(K)\cong\pi_{1}(E(K))\ni x\mapsto \alpha(x)^{-1}\in \langle t\rangle\subset\GL(1;\Z[t,t^{-1}])
\]
and 
\[
\rho:G(K)\cong\pi_{1}(E(K))\rightarrow \SL(2;\C),
\]
we have 
\[
\rho\otimes\bar{\alpha}:G(K)\cong\pi_{1}(E(K))\rightarrow \GL(2;\C[t,t^{-1}])\subset\GL(2;\C(t))\]

Further we can define 
a chain complex $C_{\ast}(E(K);\C(t)^{2}_{\rho\otimes\bar\alpha})$ 
by $\rho\otimes\bar{\alpha}$. 
We assume this chain complex is acyclic, namely, 
all homology groups $H_{\ast}(E(K);\C(t)^{2}_{\rho\otimes\bar\alpha})=0$. 
Here we can define Reidemeister torsion 
\[
\tau_{\rho\otimes\bar\alpha}(E(K))\in\C(t).
\]

Under the acyclicity condition, 
we have the following. 

\begin{theorem}[Kitano\cite{Kitano96}]
Up to $t^{2s}(s\in\Z)$, 
it holds 
\[
\Delta_{K,\rho}(t)=\tau_{\rho\otimes\bar\alpha}(E(K)).
\]
\end{theorem}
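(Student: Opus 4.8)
The plan is to follow Milnor's strategy for the untwisted case, realizing both sides as the torsion of one and the same based chain complex. First I would replace $E(K)$ by a simple-homotopy-equivalent $2$-complex: the presentation complex $W$ associated to $\langle x_1,\dots,x_n\mid r_1,\dots,r_{n-1}\rangle$, with one $0$-cell, $n$ one-cells (one per generator) and $n-1$ two-cells (one per relator). Since $E(K)$ collapses onto such a spine, the inclusion is a simple homotopy equivalence, so $\tau_{\rho\otimes\bar\alpha}(E(K))=\tau_{\rho\otimes\bar\alpha}(W)$ up to the usual unit ambiguity. The cellular chain complex of the universal cover $\widetilde W$ is a complex of free $\Z G(K)$-modules
\[
0\to \Z G(K)^{n-1}\xrightarrow{\ \partial_2\ }\Z G(K)^{n}\xrightarrow{\ \partial_1\ }\Z G(K)\to 0,
\]
and the classical Fox-calculus description of these boundary maps gives $\partial_2=\left(\partial r_i/\partial x_j\right)$ and $\partial_1=\left(x_j-1\right)$ as matrices over $\Z G(K)$.

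Next I would tensor with $\rho\otimes\bar\alpha$ over $\C(t)$ and apply $\Phi$ entrywise. The boundary operators become $\Phi(\partial r_i/\partial x_j)$ and $\Phi(x_j-1)$; that is, $\partial_2$ is exactly the twisted Alexander matrix $A_\rho$ and $\partial_1$ is the block row $(\Phi(x_1-1),\dots,\Phi(x_n-1))$. By hypothesis this complex is acyclic, and by the earlier lemma there exists $k$ with $\det\Phi(x_k-1)\neq 0$. I would then compute $\tau$ of this three-term acyclic complex via the determinant formalism developed above: using the invertible $2\times 2$ block $\Phi(x_k-1)$ to lift the preferred basis of $C_0$, and the complementary $2(n-1)$ one-cells together with $\partial_2=A_\rho$ to produce the remaining basis data. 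Milnor's formula for a short acyclic complex then yields
\[
\tau_{\rho\otimes\bar\alpha}(W)=\pm\frac{\det A_{\rho,k}}{\det\Phi(x_k-1)},
\]
which is precisely $\Delta_{K,\rho}(t)$ by its definition.

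The equality holds only up to $t^{2s}$ because the torsion is defined up to the image of $\det\circ(\rho\otimes\bar\alpha)$ on $G(K)$; as $\rho$ lands in $\SL(2;\F)$, this determinant equals $\bar\alpha(g)^2$, whose values form exactly the set $\{\pm t^{2s}\}$, matching the stated indeterminacy.

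The main obstacle is the identification in the second step: verifying, with consistent left/right module conventions, that the cellular boundary operators of $\widetilde W$ are given by the Fox Jacobian, and in particular that substituting $\bar\alpha$ in place of $\alpha$ is precisely what makes the right deck-transformation action compatible with the coefficient module $\C(t)^2_{\rho\otimes\bar\alpha}$. Once the boundary maps are pinned down, the remaining bookkeeping—tracking the bases through the splittings $C_q\cong Z_q\oplus B_{q-1}$ and confirming that no indeterminacy beyond $\pm t^{2s}$ arises—is routine linear algebra, parallel to the deficiency-one computation for $\Delta_{K,\rho}$ carried out earlier.
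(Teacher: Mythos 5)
The paper does not actually prove this theorem; it only states it with a citation to \cite{Kitano96}. Your proposal reconstructs essentially the argument of that reference: replace $E(K)$ by a presentation $2$-complex, identify the boundary maps of the universal cover's cellular chain complex with the Fox Jacobian and the column $(x_j-1)$, and evaluate the torsion of the resulting three-term acyclic complex of $\C(t)$-vector spaces, obtaining $\det A_{\rho,k}/\det\Phi(x_k-1)$. This is the correct and standard route, so your plan is sound.

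Two points deserve a little more care than your sketch gives them. First, the claim that $E(K)$ collapses onto the presentation $2$-complex, so that the inclusion is a \emph{simple} homotopy equivalence, is automatic for a Wirtinger presentation (the spine dual to a diagram) but not for an arbitrary deficiency-one presentation; since $\Delta_{K,\rho}(t)$ has already been shown to be independent of the presentation up to the stated units, you should fix a Wirtinger presentation before invoking the collapse. Second, the image of $\det\circ(\rho\otimes\bar\alpha)$ is $\{t^{2s}:s\in\Z\}$ rather than $\{\pm t^{2s}\}$, since each value is $\bar\alpha(g)^{2}\det\rho(g)=t^{2s}$; the extra sign that would normally enter the torsion from reordering basis vectors also disappears here because the blocks being permuted are even-dimensional (this is the paper's earlier remark that the signs $\pm$ do not appear for even-dimensional unimodular representations). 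Neither issue affects the viability of your strategy, but both should be stated explicitly to match the precise indeterminacy $t^{2s}$ claimed in the theorem.
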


More generally by considering a twisted homology 
$H_{*}(E(K);\C[t,t^{-1}]^{l}_{\rho\otimes\alpha})$, 
we can consider an order of $H_{*}(E(K);\C[t,t^{-1}]^{l}_{\rho\otimes\alpha})$, 
which is a generalization of the Alexander polynomial 
as a generator of an order ideal. 
This is corresponding to the numerator of $\Delta_{K,\rho}(t)$ for a Wirtinger presentation. 

Here we do not mention the details that the relation between twisted Alexander polynomials and order ideals. 
Please see \cite{Kirk-Livingston}.

In the last part of this section, 
we explain twisted Alexander polynomial is related to an obstruction 
to deform an representation. 

Here assume $G(K)=\langle x_{1},\cdots,x_{n}\ |\ r_{1},\cdots,r_{n-1}\rangle$ 
is a Wirtinger presentation. 
Let $\rho:G(K)\rightarrow \SL(2;\C)$ be a representation 
with 
$X_{i}=\rho(x_{i})$. 
Put another matrix 
$
\tilde{X_{i}}
=\begin{pmatrix}
aX_{i} & \textbf{b}_{i}\\
\textbf{0} & 1
\end{pmatrix}\in\GL(3;\C)$ 
where 
$a\in\C\setminus \{0\}$ and $\textbf{b}_i\in\C^2$.

Now we consider the next problem. 

\begin{problem}
When the map $\tilde{\rho}_{a}:\{x_1,\dots,x_n\}\ni x_{i}\mapsto\tilde{X_{i}}$ gives a representation 
$\tilde{\rho}_{a}:G(K)\rightarrow \GL(3;\C)$ ?
\end{problem}

We can generalize the theorem by de Rham as follows. 
As a generalization of the theorem by de Rham, one has the following. 
 
\begin{theorem}[Wada, unpublished]
Assume $a$ is not an eigenvalue of $X_{1}$. 
Then 
$\tilde{\rho}_{a}:G(K)\rightarrow \GL(3;\C)$ is a representation 
if and only if the numerator of $\Delta_{K,\rho}(a)$ is vanishing. 
\end{theorem}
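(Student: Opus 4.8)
The plan is to reduce the question of whether $\tilde\rho_{a}$ respects the relations to a single system of linear equations in the vectors $\mathbf{b}_1,\dots,\mathbf{b}_n$, and then to recognize the coefficient matrix of that system as the twisted Alexander matrix evaluated at $t=a$. Since a map defined on generators extends to a homomorphism on $G(K)$ precisely when every relator is sent to the identity, I would first analyze $\tilde\rho_{a}(r_j)$ for a relator $r_j$. Writing each $\tilde{X}_i$ in the block form $\begin{pmatrix} aX_i & \mathbf{b}_i\\ \mathbf{0} & 1\end{pmatrix}$, a product of such matrices along a word $w$ is again block upper triangular, with upper-left block $\Phi(w)|_{t=a}$ (recall $\Phi(x_i)=tX_i$, so evaluation at $t=a$ returns $aX_i$) and upper-right block a vector $\mathbf{v}(w)\in\C^{2}$.

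The conceptual core is to compute $\mathbf{v}(w)$. Multiplying two blocks gives the cocycle relation $\mathbf{v}(ww')=\mathbf{v}(w)+\Phi(w)|_{t=a}\,\mathbf{v}(w')$, which is exactly the product rule obeyed by Fox derivatives; so, by an induction on word length mirroring the proof of the fundamental formula, I expect
\[
\mathbf{v}(w)=\sum_{i=1}^{n}\Phi\!\left(\frac{\partial w}{\partial x_i}\right)\Big|_{t=a}\mathbf{b}_i .
\]
Applied to a relator, the upper-left block is $\Phi(r_j)|_{t=a}=I$, since $r_j=1$ in $G(K)$ forces $\Phi(r_j)=I$; hence the only obstruction to $\tilde\rho_{a}(r_j)=I$ is $\mathbf{v}(r_j)=\mathbf{0}$. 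Collecting these conditions for $j=1,\dots,n-1$ yields the single equation $A_{\rho}(a)\,\mathbf{b}=\mathbf{0}$, where $A_{\rho}(a)$ is the twisted Alexander matrix evaluated at $t=a$ and $\mathbf{b}=(\mathbf{b}_1,\dots,\mathbf{b}_n)$.

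It remains to decide when this system produces a genuinely new representation and to tie that to the numerator $\det A_{\rho,1}(a)$. Here I would use the eigenvalue hypothesis: since $X_1\in\SL(2;\C)$ has eigenvalues closed under inversion, the assumption that $a$ is not an eigenvalue of $X_1$ is equivalent to $\det(aX_1-I)=\det\Phi(x_1-1)|_{t=a}\neq 0$, i.e.\ to the invertibility of $I-aX_1$. Conjugating $\tilde\rho_{a}$ by $\begin{pmatrix} I & \mathbf{c}\\ \mathbf{0} & 1\end{pmatrix}$ replaces each $\mathbf{b}_i$ by $\mathbf{b}_i+(I-aX_i)\mathbf{c}$, so invertibility of $I-aX_1$ lets me normalize $\mathbf{b}_1=\mathbf{0}$ without changing the conjugacy class. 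With $\mathbf{b}_1=\mathbf{0}$ the first column block drops out and $A_{\rho}(a)\,\mathbf{b}=\mathbf{0}$ becomes the square system $A_{\rho,1}(a)\,\mathbf{b}'=\mathbf{0}$ in $\mathbf{b}'=(\mathbf{b}_2,\dots,\mathbf{b}_n)$. Such a system has a nonzero solution if and only if $\det A_{\rho,1}(a)=0$, which is precisely the vanishing of the numerator of $\Delta_{K,\rho}(a)$.

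The step I expect to be the main obstacle is the bookkeeping at this last, linear-algebra stage: making precise that the representations worth counting are those not conjugate to the block-diagonal representation $x_i\mapsto\begin{pmatrix} aX_i & \mathbf{0}\\ \mathbf{0} & 1\end{pmatrix}$ (the analogue of de Rham's non-abelian condition), and checking that the normalization $\mathbf{b}_1=\mathbf{0}$ exhausts the freedom coming from conjugation, so that a nonzero $\mathbf{b}'$ genuinely yields a new representation rather than a coboundary. By contrast, the Fox-calculus identity for $\mathbf{v}(w)$, although it carries the main idea, should reduce to a routine induction once the cocycle relation is established.
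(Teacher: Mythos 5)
The paper states this theorem of Wada without giving any proof, so there is no argument in the text to compare against; your proposal is correct and is the expected argument, namely the direct generalization of the proof of de Rham's theorem quoted earlier in the paper. The three key points all check out: the cocycle identity $\mathbf{v}(ww')=\mathbf{v}(w)+\Phi(w)|_{t=a}\mathbf{v}(w')$ for the upper-right block, the resulting Fox-calculus formula $\mathbf{v}(w)=\sum_i\Phi\bigl(\partial w/\partial x_i\bigr)|_{t=a}\mathbf{b}_i$ (so that, since $\Phi(r_j)=I$ automatically, the relators impose exactly $A_\rho(a)\mathbf{b}=\mathbf{0}$), and the normalization $\mathbf{b}_1=\mathbf{0}$, which is legitimate because $X_1\in\SL(2;\C)$ has eigenvalues $\lambda^{\pm1}$, so ``$a$ is not an eigenvalue of $X_1$'' is equivalent to $\det(aX_1-I)\neq 0$. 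The one point requiring care is the one you already flag: as literally stated the theorem must be read existentially (does some tuple $(\mathbf{b}_i)$ not of the coboundary form $\mathbf{b}_i=(I-aX_i)\mathbf{c}$ yield a representation), exactly as in the de Rham prototype; with that reading your final linear-algebra step is complete, since once $\mathbf{b}_1=\mathbf{0}$ the only unipotent conjugation preserving that normalization is the identity, so a nonzero $\mathbf{b}'$ in $\ker A_{\rho,1}(a)$ is genuinely not a coboundary and conversely a non-coboundary solution normalizes to one with $\mathbf{b}'\neq\mathbf{0}$, forcing $\det A_{\rho,1}(a)=0$.
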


Hence we can say twisted Alexander polynomial is an obstruction 
to deform a $\GL(2;\C)$-representation 
$G(K)\ni x_i\mapsto aX_i\in \GL(2;\C)$ 
in $\GL(2;\C)\ltimes\C^{2}\subset \GL(3;\C)$.

\section{Fibered knot}

A twisted Alexander polynomial is an invariant for $G(K)$ with a representation. 
In general it is not easy to find a linear representation of $G(K)$. 

There are two directions to do it by using a computer. 

\begin{itemize}
\item
a finite quotient (an epimorphism onto a finite group).
\item
a linear representation over a finite field.
\end{itemize}

\subsection{A finite quotient}

If we have a finite quotient, which is an epimorphism onto a finite group $G$:
\[
\gamma:G(K)\rightarrow G.
\]
Here $G$ acts naturally on $G$ and its group rings $\Z G$, $\Q G$.
Then by using $\gamma$, 
$G(K)$ also acts on $G$, $\Z G$ and $\Q G$. 

Note that $\mathrm{dim}_{\Q}(\Q G)=|G|$ 
where $|G|$ is the order of $G$.
Then this gives a $|G|$-dimensional linear representation 
\[
\tilde\gamma:G(K)\rightarrow \GL(|G|;\Q).
\]
Further $\mathrm{Im}\tilde{\gamma}\subset\GL(|G|;\Z)$ 
and  $\mathrm{Im}(\det\circ\tilde{\gamma})=\{\pm 1\}\in\Z$ 
because $G(K)$ acts on $\Z G$. 
Hence the twisted Alexander polynomial 
$\Delta_{K,\tilde\gamma}(t)$ of $K$ is well defined up to $\pm t^s$. 

If $K$ is the trivial knot, 
then a twisted Alexander polynomial has a form of 
\[
\Delta_{K,\rho}=\frac{1}{(\lambda_{1}t-1)\cdots(\lambda_{l}t-1)}
\]
for any $l$-dimensional representation $\rho$. 
Here $\lambda_{1},\cdots,\lambda_{l}$ are eigenvalues of the image of a generator of $E(K)\cong\Z$. 

Now the following holds. 

\begin{theorem}[Silver-Williams\cite{Silver-Wiliams}]
If $K$ is not trivial, then there exists a finite quotient 
$\gamma:G(K)\rightarrow G$ such that 
$\displaystyle
\Delta_{K,\tilde\gamma}(t)
\neq \frac{1}{(\lambda_{1}t-1)\cdots(\lambda_{l}t-1)}$. 
That is, twisted Alexander polynomials distinguish the trivial knot. 
\end{theorem}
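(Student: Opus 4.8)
The plan is to translate the ``unknot form'' $\frac{1}{(\lambda_1 t-1)\cdots(\lambda_l t-1)}$ into the vanishing of a twisted homology group, and then to detect the failure of that vanishing on a suitable finite cover. First I would recall, via the order-ideal description together with Kitano's identification $\Delta_{K,\rho}(t)=\tau_{\rho\otimes\bar\alpha}(E(K))$, that for an $l$-dimensional representation $\rho$ the product $\Delta_{K,\rho}(t)\cdot\det\Phi(x_k-1)$ is, up to units, the order of $H_1(E(K);\Q[t,t^{-1}]^{l}_{\rho\otimes\alpha})$, while $\det\Phi(x_k-1)=\det(t\rho(x_k)-I)=\prod_i(\lambda_i t-1)$ is the denominator coming from $H_0$. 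Hence $\Delta_{K,\rho}(t)$ has the special reciprocal form exactly when the numerator is a unit, i.e.\ when $H_1(E(K);\Q[t,t^{-1}]^{l}_{\rho\otimes\alpha})=0$. So it suffices to produce a finite quotient whose induced representation makes this first twisted homology nonzero.

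Next I would convert the twisted homology into the ordinary homology of a covering space. For a finite quotient $\gamma:G(K)\to G$ and the regular representation $\tilde\gamma:G(K)\to\GL(|G|;\Q)$, Shapiro's lemma gives
\[
H_*\big(E(K);\Q[t,t^{-1}]^{|G|}_{\tilde\gamma\otimes\alpha}\big)\cong H_*\big(\hat E_\infty;\Q\big),
\]
where $\hat E\to E(K)$ is the finite cover with $\pi_1\hat E=\ker\gamma$ and $\hat E_\infty\to\hat E$ is its infinite cyclic cover, pulled back from $\alpha$. Thus the task becomes: for a nontrivial knot $K$, find a finite, $\alpha$-compatible quotient $\gamma$ such that the infinite cyclic cover of the associated finite cover has nonzero rational first homology, $H_1(\hat E_\infty;\Q)\neq 0$.

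The reason a nontrivial knot makes this possible is the classical fact that the unknot is detected by its group: $K$ is trivial if and only if $G(K)\cong\Z$, equivalently $[G(K),G(K)]=\pi_1(E(K)_\infty)=1$ (this rests on the loop theorem/Dehn's lemma and the irreducibility of the infinite cyclic cover). So for nontrivial $K$ the commutator subgroup $[G,G]=\pi_1(E(K)_\infty)$ is a nontrivial group, and since knot groups are residually finite, there are finite quotients of $G(K)$ seeing arbitrarily deep into $[G,G]$. I would then engineer $\gamma$ so that $\ker\gamma\cap[G,G]$ is a proper finite-index subgroup of $[G,G]$ whose rational abelianization---which is exactly $H_1(\hat E_\infty;\Q)$---is nonzero.

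The \textbf{main obstacle} is precisely this last step: upgrading ``$\gamma$ is nontrivial on $[G,G]$'' to ``$H_1(\hat E_\infty;\Q)\neq 0$''. The difficulty is genuine, because there exist nontrivial knots with $\Delta_K(t)=1$, for which $H_1(E(K)_\infty;\Q)=0$ and in fact $[G,G]$ is a nontrivial \emph{perfect} group; for such knots the ordinary abelianization carries no information and one cannot simply read homology off $[G,G]^{ab}$. Overcoming this amounts to producing a finite cover whose infinite cyclic cover has positive rational first Betti number (a virtual positivity statement), which I would approach by constructing explicit metabelian or nilpotent quotients of $[G,G]$ that reveal rational homology in the tower, or---if one reads the conclusion with finite-field coefficients---by passing to a nontrivial finite $p$-group quotient of $[G,G]$, for which $H_1(\,\cdot\,;\F_p)\neq 0$ automatically. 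Once a $\gamma$ with $H_1(\hat E_\infty;\Q)\neq 0$ is in hand, the first two paragraphs show $\Delta_{K,\tilde\gamma}(t)$ cannot be of the reciprocal form, completing the argument.
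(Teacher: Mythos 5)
First, a remark on the comparison itself: the paper does not prove this theorem --- it is quoted from Silver--Williams with a citation and no argument --- so there is no internal proof to measure you against, and I will judge the proposal on its own. Your first two paragraphs are correct and set up the standard framework: the ``unknot form'' of $\Delta_{K,\tilde\gamma}(t)$ amounts to the numerator being a unit, hence to the vanishing of the order of the twisted first homology, and Shapiro's lemma identifies that homology with $H_1$ of the cover of $E(K)$ corresponding to the subgroup $\Gamma=\ker\gamma\cap[G,G]$, i.e.\ with $\Gamma^{\mathrm{ab}}$ (the exterior being aspherical).

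The problem is that the entire content of the theorem sits in the step you yourself label ``the main obstacle,'' and the proposal does not close it. Residual finiteness plus $[G,G]\neq 1$ (Dehn's lemma) only gives you a quotient $\gamma$ with $\Gamma$ a proper subgroup of $[G,G]$; it gives no control over $\Gamma^{\mathrm{ab}}$, and in the genuinely hard cases ($\Delta_K=1$, e.g.\ untwisted doubles) $[G,G]$ is a nontrivial \emph{perfect} group, so nothing survives at the top of the tower. Neither repair you sketch works as stated. ``Constructing explicit metabelian or nilpotent quotients of $[G,G]$'' is a plan, not an argument. The $p$-group remark conflates two different groups: if $[G,G]/\Gamma$ is a nontrivial finite $p$-group then indeed $H_1([G,G]/\Gamma;\F_p)\neq 0$, but the twisted homology you must show is nonzero is $H_1(\hat E_\infty)=\Gamma^{\mathrm{ab}}$, the abelianization of the \emph{kernel}, which dies entirely in that quotient; nonvanishing of the quotient's homology transfers no information to $\Gamma^{\mathrm{ab}}$. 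You have also made the target strictly harder than necessary by insisting on $\Q$-coefficients (positive first Betti number of $\hat E_\infty$, a virtual-positivity statement): over $\Z$ a nontrivial finite $H_1$ already forces a non-unit order, so one only needs $\Gamma$ to be non-perfect.

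The missing idea --- essentially the device Silver and Williams use, phrased in terms of finite permutation representations --- is to work with the cosets of a \emph{non-normal} finite-index subgroup so that the relevant intersection with $[G,G]$ has a visible cyclic quotient. Concretely: pick $1\neq g\in[G,G]$, use residual finiteness to find a finite-index normal $N\trianglelefteq G$ in which the image $\bar g$ has prime order $p$, and let $H$ be the preimage of $\langle\bar g\rangle$ in $G$. Since $\langle\bar g\rangle$ lies in the image of $[G,G]$, the subgroup $H\cap[G,G]$ surjects onto $\Z/p$, so the cover attached to the permutation module $\Q[G/H]$ (a summand of the regular representation of the finite quotient $G\to\mathrm{Sym}(G/H)$) has $H_1\neq 0$, and the corresponding twisted polynomial is not a unit. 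Without some construction of this kind that manufactures a nontrivial abelian quotient of the subgroup itself --- rather than of the quotient group --- the argument does not go through.
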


\subsection{Fibered knot}

Recall the definition of a fibered knot. 

\begin{definition}
A knot $K$ is called a fibered knot of genus $g$ 
if $E(K)$ admits a structure of a fiber bundle 
\[
E(K)=S\times [0,1]/(x,1)\sim (\varphi(x),0)
\]
over $S^1$ where $S$ is a compact connected oriented surface $S$ of genus $g$ 
and $\varphi:S\rightarrow S$ is an orientation preserving diffeomorphism.  
\end{definition}

The following classical result is well known. 

\begin{theorem}[Stallings\cite{Stallings}, Neuwirth\cite{Neuwirth}]
A knot $K$ is a fibered knot of genus $g$ 
if and only if 
the commutator subgroup $[G(K),G(K)]$ is a free group of rank $2g$.
\end{theorem}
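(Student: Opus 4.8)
The plan is to recognize $[G(K),G(K)]$ as the kernel of the abelianization and to handle the two implications by different tools: the homotopy exact sequence of a fibration for the ``only if'' direction, and Stallings' fibration theorem for the ``if'' direction.

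First I would note that since $\Z$ is the abelianization of $G(K)$, the commutator subgroup $[G(K),G(K)]$ is exactly $\ker\alpha$, where $\alpha\colon G(K)\to\Z$ is the abelianization from Section~3. Thus in both directions the relevant object is the short exact sequence
\[
1\longrightarrow [G(K),G(K)]\longrightarrow G(K)\overset{\alpha}{\longrightarrow}\Z\longrightarrow 1.
\]

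For the forward direction, suppose $E(K)=S\times[0,1]/(x,1)\sim(\varphi(x),0)$ with $S$ of genus $g$. This is a fiber bundle over $S^1$ with connected fiber $S$, and its projection to the base induces a surjection $\pi_1(E(K))\to\pi_1(S^1)=\Z$. Since any homomorphism to $\Z$ factors through $H_1(E(K))\cong\Z$, this surjection agrees with $\alpha$ up to sign. The long exact homotopy sequence, using $\pi_0(S)=0$, then identifies $\ker\alpha$ with the image of $\pi_1(S)$, giving $[G(K),G(K)]\cong\pi_1(S)$. As the fiber of a fibered knot is a Seifert surface, $S$ has one boundary component, and a genus $g$ surface with one boundary component has free fundamental group of rank $2g$; this proves the forward implication.

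For the reverse direction, assume $[G(K),G(K)]$ is free of rank $2g$. When $g=0$ the kernel is trivial, so $G(K)\cong\Z$ and $K$ is the unknot, which is fibered of genus $0$. For $g\ge 1$ the kernel is finitely generated and not isomorphic to $\Z/2$, while $E(K)$ is a compact irreducible $3$-manifold equipped with the epimorphism $\alpha$ of the displayed sequence. Stallings' fibration theorem then yields a fibration of $E(K)$ over $S^1$ with fiber $F$ satisfying $\pi_1(F)\cong[G(K),G(K)]$. This is the step I expect to be the main obstacle: the whole ``if'' direction rests on this deep $3$-manifold result, and the genuine content lies in verifying its hypotheses and extracting the fibration. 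Finally I would determine the genus: $\partial F$ sits in the boundary torus as the fiber of the restricted fibration, and because $\alpha$ kills the primitive longitude, $\partial F$ is a single curve; hence $F$ is a surface with one boundary component whose fundamental group is free of rank $2g$, which forces genus $g$. Therefore $K$ is fibered of genus $g$.
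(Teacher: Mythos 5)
The paper does not prove this theorem; it is stated as a classical result with citations to Stallings and Neuwirth, so there is no in-paper argument to compare against. Your sketch is the standard and correct one: the forward direction via the homotopy exact sequence of the surface bundle (together with the identification of the bundle projection with $\pm\alpha$ and the fact that the fiber is a one-boundary-component Seifert surface), and the reverse direction via Stallings' fibration theorem applied to the irreducible manifold $E(K)$ with the finitely generated kernel $\ker\alpha=[G(K),G(K)]$. You correctly isolate the one genuinely deep ingredient (Stallings' theorem) and handle the remaining bookkeeping --- the $g=0$ case, orientability of the fiber, and the count of boundary components of the fiber from $\alpha(\text{longitude})=0$ --- accurately, so I see no gap beyond the cited black box, which is exactly the black box the original references supply.
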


In general it is not easy to check this condition on $[G(K),G(K)]$. 
The next proposition and its corollary is well known and useful to detect the fiberedness. 
Now we fix a symplectic basis of $H_1(S;\Z)$. 

\begin{proposition}
If $K$ is a fibered knot with a fiber surface $S$ of genus $g$, 
then Alexander polynomial $\Delta_K(t)$ is given by 
\[
\Delta_{K}(t)
=\det (t\varphi_{*}-I:H_{1}(S;\Z)\rightarrow H_{1}(S;\Z))
\]
where $\varphi_{\ast}$ is the induced isomorphism on $H_{1}(S;\Z)$ 
by $\varphi$ and $I$ is the identity matrix of rank $2g$.
\end{proposition}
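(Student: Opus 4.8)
The plan is to compute the Alexander polynomial through the order-ideal description established earlier. Recall that the Proposition relating orders to $\Delta_K$ gives $(\Delta_K(t)) = \text{ord}(H_1(E(K);\Q[t,t^{-1}]_\alpha))$, together with the identification $H_1(E(K);\Q[t,t^{-1}]_\alpha) \cong H_1(E(K)_\infty;\Q)$ of $\Q[t,t^{-1}]$-modules, where $E(K)_\infty \to E(K)$ is the $\Z$-cover associated to $\alpha$. So it suffices to understand this Alexander module as a $\Q[t,t^{-1}]$-module and to read off its order.

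First I would exploit the mapping-torus structure. Since $E(K) = S\times[0,1]/(x,1)\sim(\varphi(x),0)$ fibers over $S^1$ with fiber $S$, pulling the bundle back along the universal cover $\R \to S^1$ identifies the infinite cyclic cover with $S\times\R$, which deformation retracts onto a single fiber $S\times\{0\}$. Hence $E(K)_\infty \simeq S$, and the generator $t$ of the group of deck transformations translates $S\times\{n\}$ to $S\times\{n+1\}$ by the monodromy, so on homology $t$ acts as $\varphi_*$ on $H_1(S;\Q)\cong\Q^{2g}$. (Here I use that $S$ is a once-punctured genus $g$ surface, hence homotopy equivalent to a wedge of $2g$ circles, so $H_q(S;\Q)=0$ for $q\geq 2$.)

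Next I would extract a square presentation of the module. Describe the cellular chain complex of $E(K)_\infty$, as a complex of $\Q[t,t^{-1}]$-modules, as the algebraic mapping cone of the chain map $t\varphi_\# - 1$ on $C_*(S;\Q)\otimes_\Q\Q[t,t^{-1}]$; passing to homology yields the Wang exact sequence. In degree one, using $H_2(S;\Q)=0$ and that the map $t\varphi_*-1$ on $H_0(S;\Q)=\Q$ is multiplication by $t-1$ (which is injective), the sequence collapses to an isomorphism $H_1(E(K);\Q[t,t^{-1}]_\alpha)\cong \text{coker}\,(t\varphi_*-I)$, where $t\varphi_*-I$ is the $2g\times 2g$ matrix over $\Q[t,t^{-1}]$. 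Since $\det(t\varphi_*-I)\neq 0$ in $\Q[t,t^{-1}]$, this is a square presentation of a torsion module, so its order ideal is generated by the determinant: $\text{ord}(H_1(E(K);\Q[t,t^{-1}]_\alpha)) = (\det(t\varphi_*-I))$.

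Combining the two displays gives $(\Delta_K(t)) = (\det(t\varphi_*-I))$, so the two generators agree up to a unit $\pm t^s$ of $\Q[t,t^{-1}]$, which is exactly the indeterminacy in the definition of $\Delta_K$; this proves the formula. I expect the main obstacle to be the careful, $\Z$-equivariant verification that the deck transformation induces $\varphi_*$ and that the mapping-cone description of $C_*(E(K)_\infty)$ is correct at the chain level, since everything downstream (the Wang sequence, the squareness of the presentation, and the determinant) is then routine. As an alternative route one could instead invoke Milnor's theorem $\Delta_K(t)/(t-1)=\tau_\alpha(E(K))$ and compute the Reidemeister torsion of the mapping torus directly, but the infinite-cyclic-cover argument above keeps the bookkeeping simplest.
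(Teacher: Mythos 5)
The paper does not actually prove this proposition; it is stated as ``well known,'' so there is no in-text argument to compare against. Your route --- identify $\Delta_K$ with a generator of $\mathrm{ord}\bigl(H_1(E(K);\Q[t,t^{-1}]_\alpha)\bigr)$ as in Section 5, identify the infinite cyclic cover of a mapping torus with $S\times\R\simeq S$ so that $t$ acts by the monodromy, and read off the order from the square presentation $\mathrm{coker}(t\varphi_*-I)$ coming from the Wang sequence --- is the standard proof and is consistent with the machinery the paper sets up. The mapping-cone description of $C_*(E(K)_\infty)$, the collapse of the Wang sequence in degree one using injectivity of $t-1$ on $H_0(S;\Q)\otimes\Q[t,t^{-1}]$, and the fact that a torsion module over a PID with a square presentation matrix has order ideal generated by the determinant are all correct. (The ambiguity between $\varphi_*$ and $\varphi_*^{-1}$ in the deck action is harmless: $\varphi_*$ is symplectic, so $\det(tI-\varphi_*)=t^{2g}\det(t^{-1}I-\varphi_*)$ and the two conventions differ only by a unit.)

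One step is stated too quickly. You conclude that the two generators of the same ideal in $\Q[t,t^{-1}]$ ``agree up to a unit $\pm t^s$,'' but the unit group of $\Q[t,t^{-1}]$ is $\{\,c\,t^s : c\in\Q\setminus\{0\},\ s\in\Z\,\}$, so the order-ideal computation over $\Q$ only gives $\Delta_K(t)=c\,t^s\det(t\varphi_*-I)$ for some nonzero rational $c$. To sharpen this to $\pm t^s$ you need an integrality remark: $\det(t\varphi_*-I)$ is an integer Laurent polynomial with constant term $\det(-I)=1$ and top coefficient $\det\varphi_*=1$, hence primitive, while $\Delta_K(t)$ is an integer polynomial with $\Delta_K(1)=\pm1$, hence also primitive; Gauss's lemma then forces $c=\pm1$. (Alternatively, run the whole argument over $\Z[t,t^{-1}]$, or evaluate at $t=1$.) This is a small, fixable gap rather than a flaw in the approach, but as written the final normalization is not justified.
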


\begin{corollary}
If $K$ is a fibered knot of genus $g$, then $\Delta_{K}(t)$ is monic and its degree is $2g$. 
\end{corollary}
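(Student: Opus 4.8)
The plan is to deduce the corollary directly from the preceding proposition, after which everything reduces to an elementary computation with a single integer matrix. By that proposition, once $K$ is fibered with fiber surface $S$ of genus $g$ we have
\[
\Delta_K(t)=\det\left(t\varphi_*-I\right),
\]
where $\varphi_*$ is the automorphism of $H_1(S;\Z)$ induced by the monodromy $\varphi$ and $I$ is the identity of rank $2g$. Since $S$ is a compact oriented surface of genus $g$ with one boundary circle, $H_1(S;\Z)$ is free of rank $2g$, so I may regard $\varphi_*$ as a matrix $M\in \GL(2g;\Z)$ and study the polynomial $\det(tM-I)$.

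First I would expand $\det(tM-I)$ as a polynomial in $t$ by grouping the terms of the Leibniz expansion according to how many entries are taken from $tM$ rather than from $-I$. Taking all $2g$ entries from $tM$ contributes the top term $t^{2g}\det M$, while taking all entries from $-I$ contributes the constant term $\det(-I)=(-1)^{2g}=1$. Hence
\[
\Delta_K(t)=\det(\varphi_*)\,t^{2g}+\cdots+1,
\]
so the polynomial has degree exactly $2g$ provided $\det\varphi_*\neq 0$, and its leading coefficient is precisely $\det\varphi_*$. Thus both assertions of the corollary follow the moment I know that $\det\varphi_*$ is a unit in $\Z$.

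The one point carrying any real content is the value of $\det\varphi_*$. Because $\varphi_*$ is an automorphism of the free $\Z$-module $H_1(S;\Z)$, it is invertible over $\Z$, so $\det\varphi_*=\pm 1$ automatically; this already forces the leading coefficient to be a unit, which is exactly what \emph{monic} means given that $\Delta_K(t)$ is only defined up to $\pm t^s$, and it simultaneously guarantees the top term does not vanish so that the degree is genuinely $2g$. To pin down the sharper value $\det\varphi_*=1$, I would invoke that $\varphi$ is orientation-preserving: the algebraic intersection pairing on $H_1(S;\Z)$ is a nondegenerate symplectic form, and $\varphi_*$ preserves it, so $\varphi_*\in\mathit{Sp}(2g;\Z)$ and therefore $\det\varphi_*=1$. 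The main obstacle, then, is not any computation but this structural observation about the intersection form; once it is in place, comparing the two extreme coefficients finishes the proof.
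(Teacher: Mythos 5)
Your proof is correct and follows exactly the route the paper intends: the corollary is stated as an immediate consequence of the preceding proposition $\Delta_K(t)=\det(t\varphi_*-I)$, and the paper omits the details you supply (leading coefficient $\det\varphi_*=\pm1$ since $\varphi_*$ is an automorphism of $H_1(S;\Z)\cong\Z^{2g}$, constant term $\det(-I)=1$). Your extra observation that $\varphi_*\in\mathit{Sp}(2g;\Z)$ forces $\det\varphi_*=1$ is a correct sharpening, though not needed since monicness here only requires the leading coefficient to be a unit.
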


In general we define the monicness for a Laurent polynomial 
over a commutative ring $R$ 
as follows. 

\begin{definition}
A Laurent polynomial $f(t)$ over $R$ is monic if its coefficient of the highest degree is a unit in $R$. 
\end{definition}

Now we are considering twisted Alexander polynomials of $K$ 
for $\SL(l;\F)$-representations over a field. 
Since any non zero element in a field is always a unit, 
then the above definition of the monicness does not make sense. 
However for any $\SL(n;\F)$-representation, 
twisted Alexander polynomial is well-defined as a rational expression up $\pm t^{s}$. 
Hence we can define the monicness of $\Delta_{K,\rho}(t)$ as follows. 

\begin{definition}
A twisted Alexander polynomial $\Delta_{K,\rho}$ is monic if the highest degree coefficients of the denominator and the numerator are $\pm 1$.
\end{definition}

Generalization to the twisted case is given as follows. 

\begin{theorem}[Cha\cite{Cha}, Goda-Morifuji-Kitano\cite{Goda-Kitano-Morifuji}.]
If $K$ is fibered, then $\Delta_{K,\rho}$ is monic for any $\SL(l,\F)$-representation $\rho$. 
\end{theorem}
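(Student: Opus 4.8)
The plan is to use the fibered structure to replace the Wirtinger presentation by one adapted to the monodromy, so that the twisted Alexander matrix becomes a characteristic-polynomial matrix of an $\SL(l;\F)$-valued block, and then read off the leading coefficients of the numerator and denominator directly. By the theorem of Stallings and Neuwirth, fiberedness of genus $g$ means $[G(K),G(K)]=\pi_1(S)$ is free of rank $2g$. Writing $\pi_1(S)=\langle a_1,\dots,a_{2g}\rangle$ and letting $m$ be a meridian, conjugation by $m$ realizes the monodromy $\varphi_\ast$, so $G(K)$ has the deficiency-one presentation
\[
G(K)=\langle a_1,\dots,a_{2g},m\ |\ m a_i m^{-1}\varphi_\ast(a_i)^{-1},\ i=1,\dots,2g\rangle,
\]
with abelianization $\alpha(a_i)=1$ and $\alpha(m)=t$.

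First I would compute the twisted Alexander matrix for this presentation. Writing $w_i=\varphi_\ast(a_i)\in\pi_1(S)$ and $r_i=m a_i m^{-1} w_i^{-1}$, Fox calculus gives $\partial r_i/\partial a_j = m\,\delta_{ij} - (m a_i m^{-1} w_i^{-1})\,w_i^{-1}\,\partial w_i/\partial a_j$. Applying $\Phi=(\rho\otimes\alpha)_\ast$, using $r_i=1$ in $G(K)$ and $\Phi(m)=t\,\rho(m)$, and noting $\alpha(w_i)=\alpha(a_i)=1$, one obtains
\[
\Phi\!\left(\frac{\partial r_i}{\partial a_j}\right)=t\,\rho(m)\,\delta_{ij}-\rho_\ast\!\left(\frac{\partial w_i}{\partial a_j}\right),
\]
where the last term carries no power of $t$, because $\partial w_i/\partial a_j$ is a $\Z$-combination of words in the $a_k$, all lying in $[G(K),G(K)]=\ker\alpha$. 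Removing the column of $m$, the square matrix $A_{\rho,m}$ is therefore $tB-N$, with $B=\mathrm{diag}(\rho(m),\dots,\rho(m))$ ($2g$ blocks) and $N=(\rho_\ast(\partial w_i/\partial a_j))$ constant. Since $\rho(m)\in\SL(l;\F)$ we have $\det B=(\det\rho(m))^{2g}=1$, so $\det A_{\rho,m}=\det(tB-N)=\det(tI-B^{-1}N)$ is a genuine monic polynomial of degree $2gl$ with top coefficient $1$. For the denominator I take $k=m$, giving $\det\Phi(m-1)=\det(t\rho(m)-I)$, which is nonzero with top coefficient $\det\rho(m)=1$; thus $m$ is an admissible choice and both numerator and denominator are monic, which is precisely the required definition of monicness of $\Delta_{K,\rho}$.

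The one genuinely delicate point — and the step I expect to be the main obstacle — is that this computation uses a non-Wirtinger presentation, whereas the sharp invariance statement (indeterminacy only $\pm t^{ls}$, under which monicness is well-defined) was proved via the strong Tietze moves among Wirtinger presentations. A priori a general deficiency-one presentation determines $\Delta_{K,\rho}$ only up to an arbitrary unit $c\in\F$, which would spoil monicness as a well-defined notion.

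I would close this gap by routing through Reidemeister torsion. By Kitano's theorem $\Delta_{K,\rho}(t)\doteq\tau_{\rho\otimes\bar\alpha}(E(K))$, and Reidemeister torsion is a topological invariant whose indeterminacy is exactly $\pm\,\mathrm{Im}(\det\circ(\rho\otimes\bar\alpha))=\pm t^{ls}$, independent of any presentation or cell structure. Computing $\tau$ from a CW structure adapted to the bundle $E(K)\to S^1$ — namely the algebraic mapping cone of $t\varphi_\#-\mathrm{id}$ on the twisted chain complex of the fiber $S\simeq\bigvee_{2g}S^1$ — reproduces exactly the ratio $\det(tB-N)/\det(t\rho(m)-I)$ found above, so the monic representative is the genuine invariant up to $\pm t^{ls}$. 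The remaining steps, namely verifying acyclicity over $\F(t)$ so that $\tau$ is defined and matching the chain-level mapping-cone determinant with the Fox-calculus matrix, are routine once the bundle cell structure is fixed.
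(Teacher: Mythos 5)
Your proposal is correct and follows essentially the same route the paper sketches: compute the twisted Alexander matrix from the fibration-adapted presentation $\langle a_1,\dots,a_{2g},m \mid ma_im^{-1}\varphi_\ast(a_i)^{-1}\rangle$, where monicness of numerator and denominator is immediate since $\det\rho(m)=1$, and then resolve the presentation-dependence issue (this presentation is not obviously reachable by strong Tietze moves from a Wirtinger one) by passing to Reidemeister torsion, exactly as in Goda--Kitano--Morifuji. The only blemish is a misplaced parenthesis in your intermediate Fox-calculus formula (the factor should be $(ma_im^{-1})w_i^{-1}=r_i$, not $r_iw_i^{-1}$), which does not affect the conclusion.
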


If $K$ is fibered, 
then $G(K)$ has the deficiency one presentation 
defined by its fiber bundle structure. 
By using this, it is clear that $\Delta_{K,\rho}(t)$ is monic. 
However it is not clear this presentation can be transformed by strong Tietze transformations. 
In \cite{Goda-Kitano-Morifuji} the above claim was proved for the Reidemeister torsion.

To make refinement of the above results, 
we need the notion of Thurston norm. 
Here the abelianization 
$\alpha:G(K)\rightarrow \Z$ 
can be considered as an integral 1-cocylce on $G(K)$. 
Hence it can be consider as $[\alpha]\in H^1(G(K);\Z)=H^1(E(K);\Z)$. 
Now as one has 
\[
H^1(E(K);\Z)\cong H_2(E(K),\partial E(K);\Z) 
\]
by Poincar\'e duality, 
there exists an properly embedded surface $S=S_1\cup\cdots\cup S_k$ 
whose homology class $[S]$ is dual to $[\alpha]$. 
A surface $S$ may be not connected in general.

Now Thurston norm $||\alpha||_{T}$ is defined by the following. 
\begin{definition}
\[
||\alpha||_T
=\underset{S\subset E(K)}{\mathrm{min}}
\{\ 
\chi_{-}(S)\ 
|\ [S]=\Sigma_i[S_i]\text{ is dual to }
[\alpha]
\}
\]
where 
\[
\begin{split}
\chi_{-}(S)
&=\sum_{i=1}^{k}\mathrm{max}\{-\chi(S_i),0\}\\
&=\sum_{i:\chi(S_{i})<0}-\chi(S_i).
\end{split}
\]
\end{definition}

\begin{example}
If $K$ is a fibered knot of genus $g$, 
then the fiber surface $S$ gives a homology class which is dual to $[\alpha]$. 
Here the euler characteristic $\chi(S)=2-2g-1=1-2g$. 

Hence one has 
\begin{itemize}
\item
$
||\alpha||_T
=2g-1$,
\item
$\mathrm{deg}(\Delta_K(t))=2g$.
\end{itemize}
Therefore we can see 
\[
\begin{split}
||\alpha||_{T}
&=\mathrm{deg}(\Delta_{K}(t))-1\\
&=\mathrm{deg}(\tau_{\alpha}(E(K))
\end{split}
\]
where the degree of $\tau_{\alpha}(E(K))$ is defined 
by $\mathrm{deg}(\Delta_{K}(t))-\mathrm{deg}(t-1)$.
\end{example}

This can be generalized for the twisted Alexander polynomial. 
The next result was turning point to detect the fiberedness of a 3-manifold. 

\begin{theorem}[Friedl-Kim\cite{Friedl-Kim}]
Let $K$ be a fibered knot.  
For any representation $\rho:G(K)\rightarrow\SL(l;\F)$, 
it holds that 
\begin{itemize}
\item
$\Delta_{K,\rho}(t)$ is monic,
\item
$l||\alpha||_T=\mathrm{deg}(\Delta_{K,\rho}(t))$.
\end{itemize}
\end{theorem}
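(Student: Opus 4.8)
The plan is to exploit the fibered structure of $E(K)$ directly, computing $\Delta_{K,\rho}(t)$ from a presentation adapted to the fiber bundle rather than from a Wirtinger presentation. Write $E(K)$ as the mapping torus of the monodromy $\varphi\colon S\to S$, where $S$ is the genus $g$ fiber. By the theorem of Stallings and Neuwirth the fiber subgroup $\pi_1(S)=[G(K),G(K)]$ is free of rank $2g$; choose free generators $a_1,\dots,a_{2g}$. With $\mu$ denoting the meridian (a generator of the $\Z$-quotient), the fibration gives the deficiency-one presentation
\[
G(K)=\langle a_1,\dots,a_{2g},\mu \ |\ r_i=\mu a_i\mu^{-1}\varphi_*(a_i)^{-1}\ (1\le i\le 2g)\rangle,
\]
in which $\alpha(\mu)=t$ and $\alpha(a_i)=1$, since the $a_i$ lie in $\ker\alpha=\pi_1(S)$. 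I would also record from the earlier Example that for a fibered knot $\|\alpha\|_T=2g-1$.

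Next I would run Fox calculus on the $r_i$. Differentiating and using $\frac{\partial\mu^{\pm1}}{\partial a_j}=0$ gives $\frac{\partial r_i}{\partial a_j}=\mu\delta_{ij}-(\mu a_i\mu^{-1})\varphi_*(a_i)^{-1}\frac{\partial\varphi_*(a_i)}{\partial a_j}$, and the relation $\mu a_i\mu^{-1}=\varphi_*(a_i)$ in $G(K)$ collapses the prefactor to the identity. Applying $\Phi=\rho_*\otimes\alpha_*$ and noting that every prefix occurring in $\frac{\partial\varphi_*(a_i)}{\partial a_j}$ is a word in the $a_k$ (hence has trivial $\alpha$-image), I obtain
\[
\Phi\!\left(\frac{\partial r_i}{\partial x_j}\right)=t\,\rho(\mu)\,\delta_{ij}-\rho_*\!\left(\frac{\partial\varphi_*(a_i)}{\partial a_j}\right).
\]
Removing the $\mu$-column yields the square block matrix $A_{\rho,\mu}=t\,(I_{2g}\otimes\rho(\mu))-\rho_*(J\varphi)$, where $J\varphi$ is the Fox Jacobian of $\varphi_*$. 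Since $\rho(\mu)\in\SL(l;\F)$, this is a polynomial in $t$ of degree $2gl$ whose top coefficient is $\det(I_{2g}\otimes\rho(\mu))=(\det\rho(\mu))^{2g}=1$, while the denominator $\det\Phi(\mu-1)=\det(t\rho(\mu)-I)$ has degree $l$ and top coefficient $\det\rho(\mu)=1$. Both leading coefficients are units, which is monicness, and the degree equality follows from $\deg\det A_{\rho,\mu}-\deg\det\Phi(\mu-1)=2gl-l=(2g-1)l=l\,\|\alpha\|_T$.

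The step I expect to be delicate is making ``monic'' an honest property of the invariant, because over a field the rational expression is a priori only well-defined up to a scalar factor $ct^{2s}$ under general Tietze moves, and such a $c$ would spoil the normalization of the leading coefficient. To remove this ambiguity I would invoke Kitano's theorem to identify $\Delta_{K,\rho}(t)$ with the Reidemeister torsion $\tau_{\rho\otimes\bar\alpha}(E(K))$, whose indeterminacy is only $\pm f$ with $f\in\mathrm{Im}(\det\circ\rho)=\{1\}$ because $\rho$ is unimodular; this reduces the ambiguity to $\pm t^{s}$ and makes the $\pm1$ leading coefficient meaningful. Equivalently one computes the torsion of the mapping torus as the alternating product $\det(t\varphi_{*,1}-I)/\det(t\varphi_{*,0}-I)$ of the twisted chain-level monodromy on $C_*(S;\F^l_\rho)$, the two factors matching the numerator and denominator above. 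The genuinely nontrivial input, needed both to see that the lowest coefficient of $\det A_{\rho,\mu}$ is a unit and that no degree is lost, is the classical fact that the Fox Jacobian of an automorphism of a free group is invertible over the group ring with a trivial-unit determinant; composing with the unimodular $\rho$ forces $\det\rho_*(J\varphi)=\pm1$, which is exactly what pins the monic normalization at both ends.
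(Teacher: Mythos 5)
The paper does not actually prove this theorem---it is quoted from Friedl--Kim without proof---but your reconstruction is correct and is precisely the strategy the paper itself sketches in the remark following the Cha/Goda--Kitano--Morifuji theorem: compute from the mapping-torus (deficiency-one) presentation to obtain the unit leading coefficients and the degree count $2gl-l=l(2g-1)$, then use the identification with Reidemeister torsion to bypass the fact that this presentation is not known to be reachable from a Wirtinger presentation by strong Tietze moves alone. The only minor imprecision is that the torsion's indeterminacy is $\pm t^{ls}$, coming from $\mathrm{Im}(\det\circ(\rho\otimes\bar\alpha))$ rather than $\mathrm{Im}(\det\circ\rho)$, which still suffices to make the $\pm 1$ normalization of the leading coefficients meaningful.
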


Furthermore the converse is true. 

\begin{theorem}[Friedl-Vidussi\cite{Friedl-Vidussi11-1}]
If the following two conditions hold 
\begin{itemize}
\item
$\Delta_{K,\tilde\gamma}(t)$ is monic, 
\item
$|G|\cdot||\alpha||_T=\mathrm{deg}(\Delta_{K,\tilde\gamma}(t))$, 
\end{itemize}
for any representation $\tilde\gamma:G(K)\rightarrow\GL(|G|;\Q)$ induced by a finite quotient $\gamma:G(K)\rightarrow G$, 
then $K$ is a fibered knot 
and  
the genus of $K$ is given by 
\[
g=\frac{\mathrm{deg}(\Delta_{K,\tilde\gamma}(t))+|G|}{2|G|}.
\]
\end{theorem}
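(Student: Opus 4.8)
The theorem splits into two parts of very unequal difficulty: deducing that $K$ is fibered from the hypotheses, and then extracting the genus formula. I would dispose of the genus formula first, since it is purely formal once fiberedness is established. If $K$ is fibered of genus $g$, the Example computing the Thurston norm of a fiber gives $||\alpha||_T = 2g-1$, while the Friedl--Kim degree equality $|G|\cdot||\alpha||_T=\deg(\Delta_{K,\tilde\gamma}(t))$ holds for the induced representation $\tilde\gamma$. Substituting one into the other yields $\deg(\Delta_{K,\tilde\gamma}(t))=|G|(2g-1)=2g|G|-|G|$, and solving for $g$ reproduces exactly the stated formula. Thus the entire mathematical content lies in the implication that the two conditions, holding for \emph{every} finite quotient, force $K$ to be fibered; this is precisely the converse of the previous (Friedl--Kim) theorem.

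For this converse the plan is to argue by contrapositive: assuming $K$ is not fibered, I would produce a single finite quotient $\gamma:G(K)\to G$ for which either $\Delta_{K,\tilde\gamma}(t)$ fails to be monic or its degree drops strictly below $|G|\cdot||\alpha||_T$. The starting point is the twisted Thurston-norm inequality $\deg(\Delta_{K,\rho}(t))\le l\cdot||\alpha||_T$, valid for every $l$-dimensional representation $\rho$; this recasts the second hypothesis as the assertion that the twisted torsion attains maximal degree, and monicness as the assertion that its leading coefficient is a unit. Using the identification established above of $\Delta_{K,\rho}(t)$ with the Reidemeister torsion $\tau_{\rho\otimes\bar\alpha}(E(K))$, both the degree defect and the leading coefficient are governed by the homology of the infinite cyclic cover of the appropriate finite cover of $E(K)$, so the problem becomes one of detecting this homology geometrically.

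The geometric heart of the argument is Gabai's sutured manifold theory together with Stallings' fibering criterion (quoted above), which reformulates fiberedness of $(E(K),\alpha)$ as finite generation of $\ker\alpha$, equivalently as the complementary sutured manifold $E(K)\setminus S$ being a product, where $S$ is a Thurston-norm minimizing surface dual to $\alpha$. When $K$ is not fibered this complementary manifold has a genuine non-product piece, and the task is to make that piece visible homologically after passing to a finite cover. Here I would invoke the residual finiteness and subgroup-separability properties of knot groups to build a finite quotient in which the non-product behavior appears as extra homology of the infinite cyclic cover, which is exactly the defect that spoils either monicness or maximality of the degree. Organizing these covers along a sutured-manifold hierarchy and controlling the twisted torsion at each stage is, in my view, the main obstacle: this is the deep technical core of Friedl--Vidussi, and it cannot be reduced to the elementary free-differential computations used earlier in these notes.
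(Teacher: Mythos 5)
Your outline matches the paper's: both reduce fiberedness to showing that the complementary sutured manifold $M=E(K)\setminus N(S)$ of a Seifert surface $S$ dual to $\alpha$ is a product $S\times I$, with the twisted Alexander conditions over all finite quotients forcing the inclusion $S\hookrightarrow M$ to induce isomorphisms on twisted homology and hence on $\pi_1$ --- your contrapositive phrasing, and your appeal to separability to make a non-product piece visible in some finite quotient, is the same argument read backwards, and you correctly locate the technical core in exactly the step the paper's outline glosses over. Your formal derivation of the genus formula from $||\alpha||_T=2g-1$ together with the degree equality is also exactly what the paper intends.
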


\begin{proof}
Here we explain only outline of the proof of the theorem by Friedl-Vidussi.

Take a Seifert surface 
$S\subset E(K)$  such that $[S]$ is dual to $[\alpha]$ 
and its open neighborhood 
\[
N(S)=S\times (-1,1)
\subset S\times [-1,1]
\subset E(K).\]
Here we consider a submanifold 
\[
M=E(K)\setminus N(K),
\]
which is called a sutured manifold. 

Take a natural inclusion 
\[
\iota:S\rightarrow S\times\{1\}\subset M.
\]
From the condition on twisted Alexander polynomials, 
we can see 
$\iota_\ast:H_{*}(S)\cong H_{*}(M)$
for any twisted coefficient. 

This implies the natural inclusion induces an isomorphism 
\[
\iota_\ast:\pi_{1}S\cong\pi_{1}M.
\]
Therefore we can prove $S\times I\cong M$ 
and $M$ admits a trivial fiber bundle structure over an interval. 
Finally $E(K)$ admits a structure of a fiber bundle over a circle.
\end{proof}

To detect fiberedness, it seems we need to compute Thurston norm $||\alpha||_{T}$. 
In general it is difficult. 
However we do not need to do. 
For a non-fibered knot, we can see the vanishing of a twisted Alexander polynomial. 

\begin{theorem}[Friedl-Vidussi\cite{Friedl-Vidussi}]
If $K$ is not fibered, 
then there exists a representation $\rho$ 
such that $\Delta_{K,\rho}(t)=0$.
\end{theorem}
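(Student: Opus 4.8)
The plan is to exhibit the desired $\rho$ among the representations $\tilde\gamma$ induced by finite quotients $\gamma\colon G(K)\to G$, reusing the sutured-manifold picture from the proof of the Friedl--Vidussi fibering criterion above and detecting the vanishing homologically. First I would fix a minimal genus Seifert surface $S\subset E(K)$ with $[S]$ dual to $[\alpha]$ and form the sutured manifold $M=E(K)\setminus N(S)$, whose boundary contains two copies $S_{+},S_{-}$ of $S$ with inclusions $i_{\pm}\colon S\to M$. Since $S$ is incompressible, $\iota_{*}=(i_{+})_{*}\colon\pi_{1}S\to\pi_{1}M$ is injective, and cutting along the non-separating surface $S$ presents $G(K)$ as an HNN extension, so the inclusion $j_{*}\colon\pi_{1}M\hookrightarrow G(K)$ is also injective. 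By Stallings' theorem $M\cong S\times I$ exactly when $\iota_{*}$ is onto, and this product structure is equivalent to the fiberedness of $K$. As $K$ is not fibered, $\iota_{*}$ is not surjective, so I may pick $g\in\pi_{1}M\setminus\iota_{*}(\pi_{1}S)$; then $j_{*}g\in G(K)$ lies outside the surface subgroup $\Sigma:=j_{*}\iota_{*}(\pi_{1}S)$.

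Next I would translate $\Delta_{K,\rho}(t)=0$ into a statement about twisted Betti numbers of $S$ and $M$. By the theorem of Kitano above, $\Delta_{K,\rho}(t)$ agrees up to $t^{2s}$ with the Reidemeister torsion of $E(K)$, so $\Delta_{K,\rho}(t)=0$ precisely when the complex $C_{*}(E(K);\F(t)^{l}_{\rho\otimes\alpha})$ fails to be acyclic. Viewing $E(K)$ as $M$ glued to itself along $S_{+}\cong S_{-}$ and passing to $\F(t)$-coefficients produces a Wang exact sequence
\[
\cdots\longrightarrow H_{n}(S;V)\otimes\F(t)\overset{\psi_{n}}{\longrightarrow}H_{n}(M;V)\otimes\F(t)\longrightarrow H_{n}(E(K);\F(t)^{l}_{\rho\otimes\alpha})\longrightarrow\cdots,
\]
with $\psi_{n}=t\,(i_{+})_{*}-(i_{-})_{*}$ and $V=\F^{l}$. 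The complex is acyclic if and only if every $\psi_{n}$ is an isomorphism, so it suffices to arrange $\dim_{\F}H_{n}(S;V)\neq\dim_{\F}H_{n}(M;V)$ for a single $n$, and the cheapest place to look is $n=0$.

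To produce such a representation I would invoke subgroup separability of the incompressible surface subgroup $\Sigma$ in the $3$-manifold group $G(K)$ to choose a finite quotient $\gamma\colon G(K)\to G$ with $\gamma(j_{*}g)\notin\gamma(\Sigma)$; since $j_{*}g\in j_{*}(\pi_{1}M)$, this forces $\gamma(\Sigma)\subsetneq\gamma(j_{*}\pi_{1}M)$. Taking $\rho=\tilde\gamma$ the associated $|G|$-dimensional regular representation with $V=\F[G]$, Shapiro's lemma gives $\dim_{\F}H_{0}(M;\F[G]_{\gamma})=[G:\gamma(j_{*}\pi_{1}M)]$ and $\dim_{\F}H_{0}(S;\F[G]_{\gamma})=[G:\gamma(\Sigma)]$, and the strict inclusion makes the latter strictly larger. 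Hence $\psi_{0}$ maps a strictly larger $\F(t)$-vector space into a smaller one and has nonzero kernel; exactness then yields $H_{1}(E(K);\F(t)^{|G|}_{\tilde\gamma\otimes\alpha})\neq0$, so $C_{*}(E(K);\F(t)^{|G|}_{\tilde\gamma\otimes\alpha})$ is not acyclic and $\Delta_{K,\tilde\gamma}(t)=0$, as required.

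The hard part is precisely the separability step: producing a genuine finite quotient that detects the failure of $\iota_{*}$ to be surjective requires that surface subgroups of knot-exterior groups be separable (LERF), which rests on the deep virtual specialness machinery for $3$-manifold groups (geometrization together with the work of Agol and Wise). Everything else is routine homological bookkeeping once that quotient is in hand, and the argument mirrors, in the reverse direction, the homological mechanism underlying the Friedl--Vidussi fibering criterion stated above: there the equality of all twisted Betti numbers of $S$ and $M$ forced $\iota_{*}$ to be an isomorphism and hence $K$ to be fibered, whereas here non-fiberedness is converted into an explicit discrepancy that makes a twisted Alexander polynomial vanish.
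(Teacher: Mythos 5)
The paper does not actually prove this theorem: it is quoted from Friedl--Vidussi with a citation, and the only argument sketched in that subsection is for the \emph{converse} direction (the fibering criterion), so there is no in-paper proof to measure you against. Judged on its own terms, your sketch is a faithful outline of the genuine Friedl--Vidussi strategy: cut along a minimal genus (hence incompressible) Seifert surface, observe via Stallings that non-fiberedness forces $\iota_*:\pi_1 S\to\pi_1 M$ to miss some element, separate that element from the surface subgroup in a finite quotient $G$, and then let the Wang/Mayer--Vietoris sequence with $\F[G]$-coefficients convert the resulting inequality $\dim H_0(S;\F[G])=[G:\gamma(\Sigma)]>[G:\gamma(j_*\pi_1 M)]=\dim H_0(M;\F[G])$ into a nonzero kernel of $\psi_0$ and hence into $H_1(E(K);\F(t)^{|G|})\neq 0$. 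The homological bookkeeping is correct, and you rightly flag the separability step as the crux.

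Two remarks on the places where care is needed. First, the passage from non-acyclicity to $\Delta_{K,\rho}(t)=0$ does not literally follow from Kitano's theorem as stated above (which \emph{assumes} acyclicity); what you need is the elementary fact, read off from the presentation $2$-complex, that when $\det\Phi(x_k-1)\neq 0$ the numerator $\det A_{\rho,k}$ vanishes exactly when $H_1(E(K);\F(t)^l)\neq 0$ --- true, but worth stating as such. Second, invoking full LERF for $G(K)$ is both more than you need and historically anachronistic: what suffices is separability of the subgroup carried by an \emph{embedded} incompressible surface (Przytycki--Wise), and the original Friedl--Vidussi argument predates even that, proceeding instead by induction over Gabai's sutured manifold hierarchy with the separability statements available at the time. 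So your proof is correct modulo correctly attributing (and not overstating) the deep input, but a reader should not come away thinking the theorem is a one-line corollary of the Virtual Haken machinery.
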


\subsection{DFJ-conjecture}

In this subsection we assume that $K$ is a hyperbolic knot. 

\begin{definition}
A knot $K$ is a hyperbolic knot if 
$S^3\setminus K$ admits a complete Riemannian metric of constant sectional curvature -1. 
In other words, $S^3\setminus K$ is the quotient of the three-dimensional hyperbolic space $\mathbb{H}^3$ by a subgroup of hyperbolic isometries 
$\mathit{Isom}_+(\mathbb{H}^3)$ 
acting freely and properly discontinuously. 
\end{definition}

\begin{remark}
It is well known that 
$\mathit{Isom}_+(\mathbb{H}^3)\cong\mathit{PSL}(2;\C)$.
\end{remark}

Let $K$ be a hyperbolic knot. 
Then there exists a holonomy representation 
\[
\bar{\rho}_{0}:G(K)\rightarrow \mathit{PSL}(2;\C)
\]
and a lift  
\[
\rho_0:G(K)\rightarrow 
\SL(2;\C)
\]
with $\tr(\rho_{0}(m))=2$. 
Here $m\in G(K)$ is a meridian.

If $K$ is a fibered knot of genus $g$, then 
twisted Alexander polynomial $\Delta_{K,\rho_{0}}(t)$ is monic polynomial of degree $4g-2$. 

Dunfield, Friedl and Jackson claim it is enough to consider the monicness of $\Delta_{K,\rho_{0}}(t)$ for only $\rho_{0}$ to detect the fiberedness of a hyperbolic knot. 

\begin{conjecture}[Dunfield-Friedl-Jackson\cite{Dunfield-Friedl-Jackson}]
\noindent
\begin{itemize}
\item
$\Delta_{K,\rho_{0}}(t)$ detects Thurston norm of $\alpha$, that is, 
the genus of $K$ can be described by the degree of $\Delta_{K,\rho_0}(t)$. 
\item
A hyperbolic knot $K$ is fibered if and only if $\Delta_{K,\rho_{0}}(t)$ is a monic polynomial. 
\end{itemize}
\end{conjecture}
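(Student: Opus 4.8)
The plan is to split the conjecture according to its two bullets and, within each, to separate the directions that follow from the machinery already developed from the genuinely new content. Throughout I use the lift $\rho_0:G(K)\to\SL(2;\C)$ of the holonomy representation, which by Mostow rigidity is canonically attached to $K$, and I write $l=2$ for its dimension.

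The forward half of the second bullet and the fibered case of the first bullet are immediate. If $K$ is fibered of genus $g$, then applying the theorem of Friedl--Kim to the two-dimensional representation $\rho_0$ shows at once that $\Delta_{K,\rho_0}(t)$ is monic and that
\[
\deg\Delta_{K,\rho_0}(t)=2\,||\alpha||_T=2(2g-1)=4g-2,
\]
since $||\alpha||_T=2g-1$ for a knot. This settles monicness in the fibered direction and, reading $g=(\deg\Delta_{K,\rho_0}(t)+2)/4$, recovers the genus from the degree in the fibered case.

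For the remaining content I would follow the sutured-manifold outline of the Friedl--Vidussi argument sketched earlier. Choose a minimal-genus Seifert surface $S$ dual to $[\alpha]$, form the sutured manifold $M=E(K)\setminus N(S)$, and consider the inclusion $\iota:S\hookrightarrow S\times\{1\}\subset M$. The general upper bound $\deg\Delta_{K,\rho_0}(t)\le 2\,||\alpha||_T$, coming from the CW-structure associated to $S$, is always available; the norm-detection bullet is the assertion that equality holds, and the fiberedness converse asks that monicness of $\Delta_{K,\rho_0}(t)$ force $\iota_*$ to be an isomorphism on $\rho_0$-twisted homology, whence $M\cong S\times I$ and $E(K)$ fibers over $S^1$. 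Concretely I would try to show that the degree equality pins down the ranks of $H_*(S;\C^2_{\rho_0})$ and $H_*(M;\C^2_{\rho_0})$, while monicness (a unit leading coefficient of the associated torsion) removes the obstruction to $\iota_*$ being an isomorphism across the cut.

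The hard part is precisely the passage from twisted homology to $\pi_1$, and it is where I expect the argument to stall. In the Friedl--Vidussi theorem one is handed the monicness and degree equalities for \emph{every} representation $\tilde\gamma$ factoring through a finite quotient, and residual finiteness of the knot group is what upgrades the resulting twisted-homology isomorphisms to the isomorphism $\iota_*:\pi_1 S\cong\pi_1 M$ needed for the product structure. With only the single representation $\rho_0$ available this input is gone, and it is not clear a priori that one representation detects enough of $\pi_1 M$. The whole force of the conjecture is that the discrete faithful $\rho_0$ is rich enough to substitute for the entire family of finite quotients. I would attempt to exploit the injectivity of $\rho_0$ together with the geometric rigidity of the hyperbolic structure---either by approximating $\rho_0$ by, or comparing its twisted invariants with, a suitable family of finite-quotient representations, or by a direct holonomy analysis on the sutured pieces---to recover the missing information. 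It is exactly this substitution that has resisted proof, which is why the statement stands as a conjecture rather than a theorem.
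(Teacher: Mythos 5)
The statement you were asked to prove is a \emph{conjecture}; the paper contains no proof of it, only supporting evidence (the computational verification by Dunfield--Friedl--Jackson for all hyperbolic knots with at most 15 crossings, and Morifuji's theorem establishing it for twist knots). So there is no proof in the paper to compare yours against, and no complete proof should be expected. Within that constraint your assessment is accurate. The forward implications you establish are genuinely theorems: applying the Friedl--Kim theorem to the $2$-dimensional representation $\rho_0$ of a fibered knot of genus $g$ gives monicness and $\deg\Delta_{K,\rho_0}(t)=2\|\alpha\|_T=4g-2$, which is exactly the assertion the paper makes just before stating the conjecture. You have also correctly located where any attempted converse breaks down: in the Friedl--Vidussi fibering theorem, the hypothesis ranges over \emph{all} representations induced by finite quotients, and it is the totality of these (via residual finiteness and the separability results underlying that proof) that upgrades twisted-homology isomorphisms on the sutured piece $M=E(K)\setminus N(S)$ to the $\pi_1$-isomorphism $\iota_*:\pi_1 S\cong\pi_1 M$ needed to conclude $M\cong S\times I$. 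A single representation $\rho_0$, however faithful and geometrically canonical, is not known to carry enough of $\pi_1 M$ to force this, and that substitution is precisely the open content of the conjecture.

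Two minor corrections to your write-up. First, the lift $\rho_0$ of the holonomy representation is not unique; the paper normalizes it by requiring $\tr(\rho_0(m))=2$ on a meridian $m$, and you should state that normalization rather than appeal to Mostow rigidity alone. Second, be careful not to present the inequality $\deg\Delta_{K,\rho_0}(t)\le 2\|\alpha\|_T$ as if the remaining task were merely to prove equality plus monicness by a homological bookkeeping argument on the sutured decomposition; as you yourself note in your final paragraph, the obstruction is not homological but group-theoretic, and the two halves of your proposal sit in some tension. The honest conclusion, which you reach, is that only the fibered-implies-monic-and-degree direction is provable with the tools in this paper.
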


\begin{theorem}[Dunfield-Friedl-Jackson\cite{Dunfield-Friedl-Jackson}]
DFJ-conjecture is true for all 313,209 hyperbolic knots with at most 15 crossings. 
\end{theorem}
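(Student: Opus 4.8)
The plan is to treat the statement as what it really is: a finite but very large computer verification, carried out uniformly over the census of hyperbolic knots. First I would fix, once and for all, a complete and non-redundant list of the $313{,}209$ hyperbolic knots with at most $15$ crossings, together with a diagram — hence a Wirtinger presentation of $G(K)$ — for each; such a census is available from the tabulations of Hoste--Thistlethwaite. One must also import, for each knot, the two pieces of classical data against which the conjecture is tested, namely its genus $g$ and whether or not it is fibered. With the presentation in hand, the abelianization $\alpha$ and Fox's free differentials of Section 2 give a completely algorithmic route to $\Delta_{K,\rho}(t)$ for any input representation $\rho$.

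The heart of each individual case is producing the distinguished representation $\rho_0$ and then its twisted polynomial. For a hyperbolic knot the holonomy $\bar{\rho}_0:G(K)\to \mathit{PSL}(2;\C)$ is obtained by solving the hyperbolic gluing equations of an ideal triangulation of $E(K)$ — the computation performed by software such as SnapPy — and then lifting to $\rho_0:G(K)\to\SL(2;\C)$ with $\tr(\rho_0(m))=2$. Feeding the matrices $\rho_0(x_i)$ into $\Phi=\rho_\ast\otimes\alpha_\ast$ and forming the twisted Alexander matrix $A_{\rho,k}$ yields $\Delta_{K,\rho_0}(t)=\det A_{\rho,k}/\det\Phi(x_k-1)$ by Wada's definition. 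Since $\rho_0$ is irreducible, hence non-abelian, the Kitano--Morifuji result guarantees that this rational expression is in fact a Laurent polynomial, so its degree (breadth) and its leading coefficients are well defined. For each knot I would then check the two assertions: that $\deg\Delta_{K,\rho_0}(t)$ recovers the Thurston norm $\|\alpha\|_T=2g-1$ (hence the genus, via $\deg\Delta_{K,\rho_0}(t)=2\|\alpha\|_T$), and that $\Delta_{K,\rho_0}(t)$ is a monic polynomial exactly when $K$ is fibered, using the tabulated fiberedness as the reference.

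The main obstacle is not the sheer number of cases but the rigor of the arithmetic. The entries of $\rho_0$, and therefore the coefficients of $\Delta_{K,\rho_0}(t)$, lie in the trace field of $K$, a number field known only approximately from the floating-point solution of the gluing equations; yet deciding whether the top coefficients of numerator and denominator equal $\pm 1$ (monicness), and whether they cancel or survive (degree), are exact questions that numerical data cannot settle on their own. The serious and delicate part is therefore to certify each computation: one must recognize the trace field exactly — for instance by guessing a minimal polynomial via lattice reduction and then verifying that $\rho_0$ is an honest representation into $\SL(2)$ over that field — and carry out the determinant and divisibility steps in exact number-field arithmetic, so that the leading coefficient and the breadth of $\Delta_{K,\rho_0}(t)$ are known with certainty rather than estimated. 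Once exactness is secured for every knot in the census, assembling the per-knot verdicts into the global statement is routine.
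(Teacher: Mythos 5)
The paper gives no proof of this theorem---it is simply quoted from Dunfield--Friedl--Jackson, where it is established by exactly the kind of large-scale certified computation you describe: enumeration of the census, computation of the holonomy and its distinguished $\SL(2;\C)$-lift with $\tr(\rho_0(m))=2$, exact recognition of the trace field, exact evaluation of $\Delta_{K,\rho_0}(t)$ via Fox calculus, and comparison of degree and leading coefficients against independently known genus and fiberedness. Your outline matches that approach and correctly isolates the delicate point (certifying monicness and degree in exact number-field arithmetic rather than floating point); the one input you treat too lightly is that the reference genus and fiberedness data for all $313{,}209$ knots is itself a substantial computation (largely via knot Floer homology and normal surface methods), not a mere table lookup, and of course the statement is only ``proved'' once the computation has actually been run and certified for every knot in the list.
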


Further it holds for any twist knot. 

\begin{theorem}[Morifuji\cite{Morifuji12}]
DFJ-conjecture is true for any twist knot. 
\end{theorem}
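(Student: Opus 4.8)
The plan is to reduce the conjecture to a single explicit computation of $\Delta_{K,\rho_0}(t)$ carried out uniformly along the one-parameter family of twist knots, exploiting two classical facts: every twist knot is a genus-one two-bridge knot, and among them only the trefoil $3_1$ and the figure-eight $4_1$ are fibered. Since the genus is always one, the Thurston norm is $||\alpha||_T=1$, so the first bullet of the conjecture becomes the assertion that $\deg\Delta_{K,\rho_0}(t)=4\cdot 1-2=2$ for \emph{every} twist knot, and the second bullet becomes the assertion that the leading coefficient of $\Delta_{K,\rho_0}(t)$ equals $\pm 1$ precisely for $3_1$ and $4_1$ and fails to be a unit for all the others. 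The key simplification over the general DFJ setting is that $\rho_0$ is one specific point of the character variety (the parabolic holonomy lift), so I never need a family of representations, only evaluation at this single point.

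First I would fix a two-bridge presentation $G(K)=\langle x,y\mid \Omega x\Omega^{-1}=y\rangle$, where $\Omega$ is the word in $x,y$ determined by the number of half-twists, exactly generalizing the figure-eight case $\Omega=x^{-1}yxy^{-1}$ treated above. Following the Lemma used there, any irreducible $\SL(2;\C)$-representation $\rho$ may be conjugated so that $\rho(x)=\begin{pmatrix} s & 1\\ 0 & 1/s\end{pmatrix}$ and $\rho(y)=\begin{pmatrix} s & 0\\ u & 1/s\end{pmatrix}$, and the representation variety is cut out by a single equation in $(s,u)$ coming from the relator. To pin down the holonomy lift $\rho_0$ I would impose the parabolic condition $\tr(\rho_0(m))=2$ on the meridian $m$ (which for a two-bridge knot we may take to be $x$), forcing $s=1$ and selecting a specific root $u_0$ of the defining equation, namely the one corresponding to the discrete faithful representation.

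Next I would compute $\Delta_{K,\rho_0}(t)$ by applying Fox's free differentials to the single relator $r=\Omega x\Omega^{-1}y^{-1}$ and using $\Phi=\rho_{0\ast}\otimes\alpha_\ast$, exactly as in the figure-eight computation, obtaining $\Delta_{K,\rho_0}(t)=\det\Phi(\partial r/\partial y)/\det(tX-I)$. Since $\rho_0$ is non-abelian, the Kitano-Morifuji proposition guarantees that this rational expression is a genuine Laurent polynomial. The degree is read off from the leading behaviour: the numerator is the determinant of a $2\times 2$ matrix over $\F[t,t^{-1}]$ of breadth four, divided by the breadth-two factor $\det(tX-I)=t^2-(\tr X)t+1$, yielding breadth two. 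This confirms $\deg\Delta_{K,\rho_0}(t)=2=4g-2$ for the whole family at once, settling the Thurston-norm bullet.

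The crux, and the main obstacle, is the monicness statement. The leading coefficient of $\Delta_{K,\rho_0}(t)$ is an explicit algebraic expression in $u_0$, and $u_0$ is a root of the trace (A-polynomial type) equation of the twist knot, whose degree grows with the twisting parameter. The hard part will be to evaluate this leading coefficient at the parabolic holonomy point and to show it is a unit, equal to $\pm 1$, exactly when $K$ is $3_1$ or $4_1$, and a non-unit algebraic number otherwise. I would handle this by writing the leading coefficient as a polynomial in $u_0$ with integer coefficients, reducing it modulo the minimal polynomial of $u_0$, and computing its field norm; proving that this norm is a unit precisely in the two fibered cases and never for the infinitely many non-fibered twist knots is the delicate algebraic step on which the entire proof turns.
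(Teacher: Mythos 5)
The paper does not actually prove this theorem --- it only states it with a citation to Morifuji --- so there is no in-text argument to compare against. Judged on its own, your proposal correctly identifies the overall shape of the known proof: write $G(K)=\langle x,y\mid \Omega x\Omega^{-1}y^{-1}\rangle$, restrict to parabolic representations ($s=1$) parametrized by the roots $u$ of the Riley polynomial, compute the leading coefficient $a(u)$ of the numerator $\det\Phi\left(\frac{\partial r}{\partial y}\right)$, and show $a(u)\neq\pm1$ at the holonomy root for the non-fibered twist knots. But as written it is a plan whose decisive step is deferred, and it has gaps that would need repair.

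Concretely: (1) the trefoil is a twist knot but is not hyperbolic, so $\rho_0$ does not exist for it and it lies outside the DFJ conjecture; the dichotomy you need is ``monic iff $K=4_1$'' among hyperbolic twist knots, not ``$3_1$ or $4_1$''. (2) The degree argument (``numerator of breadth four'') is not justified uniformly: the word $\Omega$ grows with the twist parameter $n$, so the naive breadth of $\det\Phi\left(\frac{\partial r}{\partial y}\right)$ grows with $n$; that it collapses to breadth $4$ is part of what must be proved, or one must invoke the Friedl--Kim bound $\deg\Delta_{K,\rho_0}(t)\leq 2||\alpha||_T=2$ and then still show the extreme coefficient $a(u_0)$ is nonzero. (3) The crux --- evaluating $a(u)$ in closed form uniformly in $n$ and controlling it at the holonomy root --- is exactly the content of the theorem and is not carried out. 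Your norm strategy can be made to work, but only via the observation that if $a(u_0)$ equalled the rational number $\pm1$ then every Galois conjugate $a(u_i)$ would equal $\pm1$ and the norm would be $\pm1$; this presupposes that the Riley polynomial of the twist knot is irreducible over $\Q$ (true, but itself a nontrivial input), since otherwise one cannot tell which irreducible factor contains the discrete faithful root and must argue over all factors. Without that, a non-unit norm does not exclude $a(u_0)=\pm1$ at the particular root you care about. Finally, the norm itself, essentially $\mathrm{Res}(\phi_n,a)$ up to normalization, must be computed for all $n$ at once (via the recursion satisfied by the Riley polynomials $\phi_n$); that computation is the real work and is exactly what is missing.
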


\begin{remark}
\noindent
\begin{itemize}
\item
Morifuji and Tran\cite{Morifuji-Tran} 
treated twisted Alexander polynomials 
of a 2-bridge knot for parabolic representations 
in connection with DFJ-conjecture. 
Here a representation $\rho$ is called a parabolic representation if $\tr(\rho(m))=2$. 
\item
Recently Agol and Dunfield\cite{Agol-Dunfield} showed we can detect the Thurston norm of $K$ by from $\Delta_{K,\rho_{0}}(t)$ in a large class of hyperbolic knots. 
\end{itemize}
\end{remark}

\section{Epimorphism between knot groups}

For the rest of this paper, 
as one application of the twisted Alexander polynomial, 
we treat some topics on epimorphisms between knot groups. 

\begin{definition}
For two knots $K_1, K_2$, 
we write 
$K_1 \geq K_2$ if there exists 
an epimorphism $\varphi :G(K_1)\to G(K_2)$ 
which maps a meridian of $K_1$ to a meridian of $K_2$.
\end{definition}

Let us start from a simple example $8_5\geq 3_1$. 
\begin{example}



They have the following presentations:
\begin{align*}
G(8_5)=\langle y_1, y_2, y_3, y_4, y_5, y_6, y_7, y_8 \mid\ &y_7 y_2 y_7^{-1} y_1^{-1},
y_8 y_3 y_8^{-1} y_2^{-1}, y_6 y_4 y_6^{-1} y_3^{-1},\\
&y_1 y_5 y_1^{-1} y_4^{-1},y_3 y_6 y_3^{-1} y_5^{-1}, y_4 y_7 y_4^{-1} y_6^{-1}, \\
&y_2 y_8 y_2^{-1} y_7^{-1}\rangle.
\end{align*}
\begin{align*}
G(3_1)&=\langle x_1 , x_2 , x_3\mid x_3 x_1 x_3^{-1}x_2^{-1}, x_1 x_2 x_1^{-1} x_3^{-1}\rangle.
\end{align*}

If generators of $G(8_{5})$ are mapped to the following generators of $G(3_{1})$ 
as
\[
y_1 \mapsto x_3,\ y_2 \mapsto x_2,\ y_3\mapsto x_1,\ y_4\mapsto x_3,\]
\[
\ y_5\mapsto x_3,\ y_6\mapsto x_2,\ y_7\mapsto x_1,\ y_8\mapsto x_3,\]
any relator in $G(8_5)$ goes to the trivial element in $G(3_1)$. 
For example, it can be seen 
\[
\begin{split}
y_7 y_2 y_7^{-1} y_1^{-1}
&\mapsto x_1 x_2 x_1^{-1} x_3^{-1}=1, \\
y_8 y_3 y_8^{-1} y_2^{-1}
&\mapsto x_3 x_1 x_3^{-1} x_2^{-1}=1.
\end{split}
\]
Hence this gives an epimorphism from $G(8_5)$ onto $G(3_1)$, 
which maps a meridian to a meridian. 
Therefore, we can write $$8_5\geq 3_1.$$
\end{example}

The geometric reason why there exists an epimorphism from $G(8_5)$ to $G(3_1)$ is 
\begin{itemize}
\item
$8_5$ has a period 2, namely, 
it is invariant under some $\pi$-rotation of $S^{3}$, 
\item
$3_1$ is the quotient knot of $8_5$ by this $\pi$-rotation.
\end{itemize}

Here we define a period of a knot as follows. 
\begin{definition}
A knot $K$ in $S^3$ has a period $q>1$ 
if there exists an orientation preserving periodic diffeomorphism  
$f:(S^3,K)\rightarrow (S^3,K)$ of order $q$ 
such that 
the set of fixed points $\textit{Fix}(f)$ is homeomorphic to $S^1$ in $S^3$ 
which is disjoint from $K$. 
 \end{definition}

\begin{remark}
By the positive answer for the Smith conjecture, 
we can see the fixed point set is unknot. 
See \cite{Morgan-Bass} for the Smith conjecture. 
\end{remark}

If $K$ is a periodic knot of order $q$, 
this means there exists an action of $\Z/q\Z$ on $(S^3,K)$. 
Now the quotient space of $S^3$ by this action is topologically $S^3$ 
and 
the image of $K$ by the quotient map is a knot in $S^3$ again. 

The following problem is a fundamental problem.

\begin{problem}
When and how there exists an epimorphism between given knot groups ?
\end{problem}

There are some geometric situations for the existence of a epimorphism as follows. 

\begin{itemize}
\item
To the trivial knot $\bigcirc$ 
from any knot $K$, 
there exists an epimorphism 
\[\alpha:G(K)\rightarrow G(\bigcirc)=\Z.
\]
This is just the {abelianization }
\[G(K)\rightarrow G(K)/[G(K),G(K)]\cong\Z.\]
This map can be always realized a collapse map between knot exteriors with degree one.
\end{itemize}

\begin{itemize}
\item
There exist two epimorphisms 
from any {composite knot} to each of {factor knots}. \newline
\[
G(K_1\sharp K_2)\rightarrow G(K_1), G(K_2).
\]
They are also just induced by {collapse maps} with degree one. 
\item
In general a degree one map between knot exteriors 
induces an epimorphism. 
Explain precisely later.
\item
Let $K$ be a knot with a period $q$.  
Its quotient map $(S^3,K) \rightarrow (S^3,K')=(S^3,K)/_\sim$ induces 
an epimorphism 
\[G(K)\rightarrow G(K').\]
\end{itemize}

\begin{itemize}
\item
For any knot $K$, we take the composite knot $K\sharp \bar{K}$ 
where $\bar{K}$ is the mirror image of $K$.  
The mirror image of $K$ is defined 
as the image of $K$ by a reflection of $K$ along $\R^2$. 
Here we put a knot 
\[
\begin{split}
K\subset &\R^2\times (-\infty,0)\\
\subset &\R^{3}\subset S^{3}=\R^3\cup\{\infty\}.
\end{split}
\] 

This reflection can be naturally extended to $S^3$. 
Then there exist epimorphisms  
\[
G(K\sharp \bar{K})\rightarrow G(K)
\]
between them. 
This epimorphism is induced from a quotient map 
\[(S^3,K \sharp \bar{K})\rightarrow (S^3,K)\] 
of a reflection $(S^3,K\sharp \bar{K})$, whose degree is {zero}. 

\item
There is Ohtsuki-Riley-Sakuma construction for epimorphisms between 2-bridge links. 
Please see \cite{Ohtsuki-Riley-Sakuma} for details. 
\end{itemize}

First we recall the definition of the mapping degree. 

Take any proper map 
\[
\varphi:(E(K_1),\partial E(K_1))\rightarrow (E(K_2),\partial E(K_2))
\]
between two knot exteriors. 
This map $\varphi$ induces a homomorphism 
\[
\varphi_*:H_3(E(K_1),\partial E(K_1);\Z)
\rightarrow 
H_3(E(K_2),\partial E(K_2);\Z).
\]
\begin{definition}
A {degree} of $\varphi$ is defined 
to be the {integer }$d$ 
satisfying  
\[
\varphi_*[E(K_1),\partial E(K_1)]=d[E(K_2),\partial E(K_2)]
\]
where $[E(K_i),\partial E(K_i)]$ is a generator 
of $H_3(E(K_i),\partial E(K_i);\Z)\cong\Z$ 
under the induced orientation from $S^{3}$ for $i=1,2$.
\end{definition}

\begin{proposition}
If $\varphi_*:G(K_1)\rightarrow G(K_2)$ is induced from a degree $d$ map, 
then this degree $d$ can be divisible by the index $n=[G(K_2):\varphi_*(G(K_1))]$. 
Namely {${d}/{n}$ is an integer}. 
\end{proposition}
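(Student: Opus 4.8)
The plan is to realize the index $n$ as the number of sheets of a covering space and then invoke the multiplicativity of the mapping degree under composition. Write $H=\varphi_*(G(K_1))\subset G(K_2)$, so that $n=[G(K_2):H]$. If $d=0$ there is nothing to prove, since then $d/n=0\in\Z$; so assume $d\neq 0$. The idea is that $\varphi$ factors as $E(K_1)\xrightarrow{\tilde\varphi}\tilde E\xrightarrow{p}E(K_2)$, where $p$ is the covering associated with the subgroup $H\subset\pi_1(E(K_2))=G(K_2)$, after which $d=\deg(\varphi)=\deg(p)\cdot\deg(\tilde\varphi)=n\cdot\deg(\tilde\varphi)$, giving $n\mid d$.

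First I would let $p:\tilde E\to E(K_2)$ be the connected covering corresponding to the subgroup $H$. Because $E(K_2)$ is a compact connected orientable $3$-manifold with torus boundary, $\tilde E$ is a connected orientable $3$-manifold and $p$ maps $\partial\tilde E=p^{-1}(\partial E(K_2))$ onto $\partial E(K_2)$. Since $\mathrm{im}(\varphi_*)=H\subset H$, the lifting criterion provides a lift $\tilde\varphi:E(K_1)\to\tilde E$ with $p\circ\tilde\varphi=\varphi$; as $E(K_1)$ is compact, $\tilde\varphi$ is proper, and it carries $\partial E(K_1)$ into $\partial\tilde E$ because $\varphi$ is boundary-preserving and $p^{-1}(\partial E(K_2))=\partial\tilde E$. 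Hence $\tilde\varphi_*[E(K_1),\partial E(K_1)]$ is a well-defined class in $H_3(\tilde E,\partial\tilde E;\Z)$.

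Next I would show $n$ is finite. Suppose $n=\infty$. Then $\tilde E$ is a non-compact connected orientable $3$-manifold with boundary, so by Poincar\'e--Lefschetz duality $H_3(\tilde E,\partial\tilde E;\Z)\cong H^0_c(\tilde E;\Z)=0$. Thus $\tilde\varphi_*[E(K_1),\partial E(K_1)]=0$, and therefore $\varphi_*[E(K_1),\partial E(K_1)]=p_*\tilde\varphi_*[E(K_1),\partial E(K_1)]=0$, i.e. $d=0$, contrary to assumption. So $n<\infty$, $p$ is an $n$-sheeted covering, and $\tilde E$ is moreover compact, whence $H_3(\tilde E,\partial\tilde E;\Z)\cong\Z$ is generated by a fundamental class $[\tilde E,\partial\tilde E]$.

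Finally I would compare degrees. A finite covering of oriented manifolds-with-boundary satisfies $p_*[\tilde E,\partial\tilde E]=n\,[E(K_2),\partial E(K_2)]$, while by definition $\tilde\varphi_*[E(K_1),\partial E(K_1)]=\deg(\tilde\varphi)\,[\tilde E,\partial\tilde E]$. Composing, $d\,[E(K_2),\partial E(K_2)]=\varphi_*[E(K_1),\partial E(K_1)]=p_*\tilde\varphi_*[E(K_1),\partial E(K_1)]=n\deg(\tilde\varphi)\,[E(K_2),\partial E(K_2)]$, so $d=n\deg(\tilde\varphi)$ and $n\mid d$. I expect the main technical point to be the careful handling of the relative fundamental classes for these manifolds-with-boundary and their coverings---in particular verifying $p_*[\tilde E,\partial\tilde E]=n\,[E(K_2),\partial E(K_2)]$ and the vanishing of the top relative homology of a non-compact $3$-manifold---rather than the formal multiplicativity of degree, which is then routine.
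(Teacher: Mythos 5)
Your argument is correct: factoring $\varphi$ through the covering $p:\tilde E\to E(K_2)$ associated to $H=\varphi_*(G(K_1))$, ruling out infinite index via $H_3(\tilde E,\partial\tilde E;\Z)=0$ for a non-compact cover (when $d\neq 0$), and then using $p_*[\tilde E,\partial\tilde E]=n\,[E(K_2),\partial E(K_2)]$ to get $d=n\deg(\tilde\varphi)$ is the standard proof of this fact. The paper itself states this proposition without proof, so there is no argument to compare against, but your covering-space factorization is exactly the expected one and I see no gaps.
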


In particular if $d=1$, then the index $n$ should be 1 
and hence $\varphi_\ast(G(K_1))=G(K_2)$. 
Therefore we obtain the following. 

\begin{corollary}
If there exists a degree one map 
\[
\varphi:(E(K_1),\partial E(K_1))\rightarrow (E(K_2),\partial E(K_2)), \]
then $\varphi$  induces an epimorphism 
\[
\varphi_*:G(K_1)\rightarrow G(K_2).\]
\end{corollary}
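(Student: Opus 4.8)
The plan is to obtain the corollary as an immediate specialization of the preceding proposition, which I may assume. First I would observe that a degree one map $\varphi:(E(K_1),\partial E(K_1))\rightarrow (E(K_2),\partial E(K_2))$ is in particular a degree $d$ map with $d=1$, and that it induces the homomorphism $\varphi_*:G(K_1)\rightarrow G(K_2)$ on fundamental groups. The proposition then guarantees that the index $n=[G(K_2):\varphi_*(G(K_1))]$ divides $d$.

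The remaining step is a one-line arithmetic observation. Since $n$ is the index of a subgroup it is a positive integer, and the divisibility $n\mid d$ with $d=1$ forces $n=1$. A subgroup of index one is the whole group, so $\varphi_*(G(K_1))=G(K_2)$; that is exactly the statement that $\varphi_*$ is surjective, hence an epimorphism. This reproduces the remark made just before the corollary in the text.

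I do not expect any genuine obstacle in the corollary itself: granting the proposition, the conclusion is formal. The real content sits in the proposition, and if I also had to establish that, the hard part would be the covering-space bookkeeping. I would set $H=\varphi_*(G(K_1))\subseteq G(K_2)$, pass to the $n$-sheeted covering $\widehat{E(K_2)}\rightarrow E(K_2)$ corresponding to $H$, lift $\varphi$ to a map $\widetilde{\varphi}:E(K_1)\rightarrow\widehat{E(K_2)}$ using the inclusion $\varphi_*(G(K_1))\subseteq H$, and compare fundamental classes to get $d=n\cdot\deg(\widetilde{\varphi})$ with $\deg(\widetilde{\varphi})\in\Z$. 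The delicate point there is compatibility on the boundary tori, so that the relative fundamental classes in $H_3(-,\partial-;\Z)$ behave correctly under the lift and the projection; but none of this is needed for the corollary as stated.
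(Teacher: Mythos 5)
Your argument matches the paper's: the text derives the corollary exactly by noting that the proposition forces the index $n$ to divide $d=1$, hence $n=1$ and $\varphi_\ast(G(K_1))=G(K_2)$. The proposal is correct and takes essentially the same route.
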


\begin{remark}
As explained later, 
there exist epimorphisms induced from 
\begin{itemize}
\item
a non zero degree map, but not degree one map,
\item
a degree zero map
\end{itemize}
\end{remark}

\subsection{Determination on a partial order}

For the set of isomorphism classes of knots, we define a partial order by using epimorphisms. 

\begin{proposition}
The relation $K\geq K'$ gives {a partial order }on the set of the prime knots. 
Namely this relation $\geq$ satisfies the followings;
\begin{enumerate}
\item
$K\geq K$.
\item
$K\geq K',\ K'\geq K\Rightarrow K=K'$.
\item
$K\geq K',\ K'\geq K''\Rightarrow K\geq K''$.
\end{enumerate}
\end{proposition}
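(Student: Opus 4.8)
The plan is to verify the three axioms in turn, with reflexivity and transitivity being immediate and antisymmetry carrying all the weight. For $(1)$, the identity automorphism of $G(K)$ is an epimorphism sending a meridian to itself, so $K\geq K$. For $(3)$, if $\varphi:G(K)\to G(K')$ and $\psi:G(K')\to G(K'')$ are epimorphisms each sending a meridian to a meridian, then the composite $\psi\circ\varphi$ is again an epimorphism sending a meridian of $K$ to a meridian of $K''$; hence $K\geq K''$. These two steps require nothing beyond the definitions.

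The substance is antisymmetry $(2)$: given epimorphisms $\varphi:G(K)\to G(K')$ and $\psi:G(K')\to G(K)$, each preserving meridians, I must conclude $K=K'$. First I would upgrade both maps to isomorphisms using the fact that knot groups are \emph{Hopfian}. Knot groups are finitely generated and residually finite, being fundamental groups of compact $3$-manifolds, and a finitely generated residually finite group is Hopfian, i.e.\ every surjective endomorphism is an isomorphism. The composite $\psi\circ\varphi:G(K)\to G(K)$ is a surjective endomorphism of the Hopfian group $G(K)$, hence an automorphism; this forces $\varphi$ to be injective, so $\varphi$ (and symmetrically $\psi$) is an isomorphism $G(K)\cong G(K')$ carrying a meridian to a meridian.

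It remains to pass from this algebraic isomorphism to an actual equivalence of knots, and this is where I expect the real difficulty and where primeness is essential. The knot exteriors $E(K)$ and $E(K')$ are aspherical Haken $3$-manifolds, so the isomorphism induces a homotopy equivalence between them. Because the meridian is preserved and the peripheral $\Z^{2}$-subgroup is characterized algebraically up to conjugacy, the homotopy equivalence respects the peripheral structure; Waldhausen's theorem then promotes it to a homeomorphism $E(K)\cong E(K')$, and the Gordon--Luecke theorem, that a knot is determined by its complement, yields $K=K'$.

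The main obstacle is precisely this last passage, since a bare group isomorphism need not be geometric. Indeed for composite knots antisymmetry genuinely fails --- the granny and square knots $3_{1}\# 3_{1}$ and $3_{1}\#\overline{3_{1}}$ have isomorphic groups with meridians matched, so they are mutually $\geq$ without being equal. Restricting to prime knots is exactly what guarantees that the meridian-preserving isomorphism of groups is realized by a homeomorphism of exteriors, via Waldhausen together with the algebraic rigidity of the peripheral structure for prime knots, after which Gordon--Luecke closes the argument. Checking that the meridian condition suffices to pin down the full peripheral structure, so that Waldhausen applies, is the technical heart of the proof.
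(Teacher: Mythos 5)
Your proposal is correct and follows essentially the same route as the paper: reflexivity and transitivity are immediate, and antisymmetry is obtained by composing the two epimorphisms, invoking the Hopfian property of knot groups to upgrade them to isomorphisms, and then using the fact that a prime knot is determined by its group (Gordon--Luecke). The only difference is that the paper simply cites this last fact, whereas you sketch its derivation via Waldhausen's theorem and the peripheral structure; that sketch is the content of the cited result, not an extra step the paper requires.
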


\begin{proof}
The only one non trivial claim is the second one,
\[
K\geq K',\ K'\geq K\Rightarrow K=K'.
\]

Here are two facts that we need to prove it. 
\begin{itemize}
\item
Any knot group $G(K)$ is {Hopfian}, 
namely 
any epimorphism $G(K)\rightarrow G(K)$ is an isomorphism. See \cite{Hempel} 
as a reference for example. 
\item
The knot group $G(K)$ determines the knot type for a prime knot $K$ \cite{Gordon-Luecke}.
\end{itemize}

Now we assume $K\geq K',\ K'\geq K$. 
Then there exist two epimorphisms 
$\varphi_{1}:G(K)\rightarrow G(K'),
\ 
\varphi_{2}:G(K')\rightarrow G(K)$.
Here the composition of two epimorphisms $\varphi_{2}\circ\varphi_{1}:G(K)\rightarrow G(K)$ is an isomorphism because $G(K)$ is Hopfian. 
 
Similarly the other $\varphi_{1}\circ\varphi_{2}:G(K')\rightarrow G(K')$ is an isomorphism, too. 
Hence $G(K)$ is isomorphic to $G(K')$. 
Because $K$ and $K'$ are prime knots, then $K=K'$. 
\end{proof}

\begin{remark}
\noindent
\begin{itemize}
\item
To say facts, here we do not use the assumption that an epimorphim preserves a meridian. 
However we need this assumption to determine the partial order. 
\item
Cha and Suzuki\cite{Cha-Suzuki} proved that there exist pairs of knots only with an epimorphism which does not preserve a meridian. 
Namely they admit an epimorphism, but never do an meridian preserving epimorphism. 
\end{itemize}
\end{remark}

To determine partial orders, 
fundamental tools to determine are 
\begin{itemize}
\item
Alexander polynomial,
\item
Twisted Alexander polynomial.
\end{itemize}

The following fact on the Alexander polynomial is well known. As a reference, 
see \cite{Crowell-Fox} for example. 

\begin{proposition}
If $K_1\geq K_2$, then $\Delta _{K_1}(t)$ can be divisible by $\Delta _{K_2}(t)$.
\end{proposition}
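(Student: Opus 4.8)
The plan is to realize the divisibility at the level of Alexander modules, using the characterization $\text{ord}(H_{1}(E(K);\Q[t,t^{-1}]_{\alpha}))=(\Delta_{K}(t))$ established earlier together with the isomorphism $H_{1}(E(K);\Q[t,t^{-1}]_{\alpha})\cong H_{1}(E(K)_{\infty};\Q)$. Write $G_i=G(K_i)$ and let $\varphi:G_1\to G_2$ be the given meridian-preserving epimorphism. The first observation is that $\varphi$ is compatible with the abelianizations: since a meridian $\mu_1$ of $K_1$ is sent to a meridian $\mu_2$ of $K_2$, and meridians map to the generator $t$ under each $\alpha_i$, we have $\alpha_2\circ\varphi=\alpha_1$. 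Thus $\varphi$ sits in a commutative square over the identity of $\Z=\langle t\rangle$.

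Next I would pass to commutator subgroups. Because $\varphi$ is surjective, every commutator in $G_2$ is the $\varphi$-image of a commutator in $G_1$, so $\varphi$ carries $[G_1,G_1]$ onto $[G_2,G_2]$, and it carries second commutators into second commutators. Hence $\varphi$ induces a surjective homomorphism of metabelian quotients $[G_1,G_1]/[[G_1,G_1],[G_1,G_1]]\twoheadrightarrow[G_2,G_2]/[[G_2,G_2],[G_2,G_2]]$. These quotients are exactly the integral Alexander modules $H_{1}(E(K_i)_{\infty};\Z)$, on which $t$ acts by conjugation by a meridian. Since $\varphi(\mu_1)=\mu_2$, this induced map intertwines the two $t$-actions, so it is a map of $\Z[t,t^{-1}]$-modules; tensoring with $\Q$ (which preserves surjections) yields a surjection of $\Q[t,t^{-1}]$-modules
\[
\varphi_*:H_{1}(E(K_1)_{\infty};\Q)\twoheadrightarrow H_{1}(E(K_2)_{\infty};\Q).
\]

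Finally I would read off the divisibility from the order ideal. Both modules are finitely generated torsion modules over the principal ideal domain $\Q[t,t^{-1}]$ with no free part, so the structure theorem applies and their orders are $(\Delta_{K_1}(t))$ and $(\Delta_{K_2}(t))$ respectively. Writing $N=\textit{Ker}\,\varphi_*$, the short exact sequence $0\to N\to H_{1}(E(K_1)_{\infty};\Q)\to H_{1}(E(K_2)_{\infty};\Q)\to 0$ gives $\text{ord}(H_{1}(E(K_1)_{\infty};\Q))=\text{ord}(N)\cdot\text{ord}(H_{1}(E(K_2)_{\infty};\Q))$ by the multiplicativity of order ideals under short exact sequences. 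Therefore $(\Delta_{K_1}(t))\subseteq(\Delta_{K_2}(t))$, which is precisely the statement that $\Delta_{K_2}(t)$ divides $\Delta_{K_1}(t)$.

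The step I expect to require the most care is the construction and verification of $\varphi_*$: one must check that the induced map on metabelian quotients is well defined, genuinely surjective, and $\Q[t,t^{-1}]$-linear, and it is exactly here that the meridian-preserving hypothesis is indispensable, since without $\varphi(\mu_1)=\mu_2$ the two $t$-actions need not correspond and the map would fail to be a module homomorphism. Granting this, the multiplicativity of order ideals under short exact sequences, immediate from the structure theorem already invoked in the excerpt, closes the argument.
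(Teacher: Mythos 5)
Your argument is correct. Note that the paper itself offers no proof of this proposition --- it is stated as a well-known fact with a reference to Crowell--Fox --- so there is nothing to compare against line by line; but your route is the standard one and meshes exactly with the machinery the paper has already set up (the identification of $H_1(E(K)_\infty;\Q)$ with the Alexander module and the computation of its order ideal as $(\Delta_K(t))$). The two points that genuinely need care are the ones you flag: the induced map on metabelian quotients is $\Q[t,t^{-1}]$-linear because $\varphi(\mu_1)=\mu_2$ forces $\alpha_2\circ\varphi=\alpha_1$ and hence intertwines the conjugation actions defining $t$ (in fact, since inner automorphisms by elements of $[G,G]$ act trivially on $[G,G]^{\mathrm{ab}}$, only $\alpha_2\circ\varphi=\alpha_1$ is really used); and the multiplicativity $\mathrm{ord}(M)=\mathrm{ord}(N)\cdot\mathrm{ord}(M'')$ requires that all three modules be finitely generated torsion modules over $\Q[t,t^{-1}]$, which holds here because $\Delta_{K_i}(t)\neq 0$ for knots in $S^3$. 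With those points in place the divisibility follows as you state.
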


This can be generalized to the twisted Alexander polynomial as follows. 

\begin{theorem}[Kitano-Suzuki-Wada\cite{Kitano-Suzuki-Wada}]
If $K_1\geq K_2$ realized by an epimorpshim $\varphi:G(K_{1})\to G(K_{2})$, 
then 
$\Delta_{K_1,\rho _2\circ\varphi}(t)$ can be divisible by $\Delta _{K_2,\rho _2}(t)\ $
for any representation $\rho_2\ :\ G(K_2)\rightarrow \SL(l;\F)$.
\end{theorem}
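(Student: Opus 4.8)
The plan is to reduce the asserted divisibility of rational expressions to a divisibility of order ideals of twisted Alexander modules, and to obtain that divisibility from a surjection of modules. The crucial structural input is that $\varphi$ carries a meridian of $K_1$ to a meridian of $K_2$: writing $\alpha_i:G(K_i)\to\langle t\rangle$ for the two abelianizations, this forces $\alpha_2\circ\varphi=\alpha_1$, so the coefficient systems are compatible. Set $R=\F[t,t^{-1}]$ and let $M=R^l$ carry the $G(K_2)$-action $\rho_2\otimes\alpha_2$; give $M$ the $G(K_1)$-action $(\rho_2\circ\varphi)\otimes\alpha_1=(\rho_2\otimes\alpha_2)\circ\varphi$. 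Then $\ker\varphi$ acts trivially on $M$, i.e. the $G(K_1)$-module $M$ is inflated from $G(K_2)$. Recalling from the excerpt that, for a Wirtinger presentation, the numerator of $\Delta_{K,\rho}(t)$ is $\mathrm{ord}\,H_1(E(K);R^l_{\rho\otimes\alpha})$, and that the denominator is $\det\Phi(x_k-1)$ (the twisted analogue of the order of $H_0$), it suffices to prove (i) $\mathrm{ord}\,H_1(E(K_2);M)$ divides $\mathrm{ord}\,H_1(E(K_1);M)$, and (ii) the two denominators agree.

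The main step is to establish the surjection
\[
H_1(E(K_1);M)\twoheadrightarrow H_1(E(K_2);M).
\]
Knot exteriors are aspherical, so $E(K_i)=K(G(K_i),1)$ and these twisted homology groups coincide with the group homologies $H_1(G(K_i);M)$; since each $E(K_i)$ is a finite complex, these are finitely generated over $R$. Applying the five-term exact sequence of the Lyndon--Hochschild--Serre spectral sequence to the extension $1\to\ker\varphi\to G(K_1)\to G(K_2)\to 1$ with the inflated coefficients $M$ yields
\[
\big(H_1(\ker\varphi;M)\big)_{G(K_2)}\to H_1(G(K_1);M)\to H_1(G(K_2);M)\to 0,
\]
and its right-exactness is precisely the desired surjectivity. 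This is the point at which the inflation of $M$, equivalently the meridian-preserving hypothesis, is indispensable.

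Next I would convert the surjection into divisibility. Because $R$ is a principal ideal domain, a surjection $A\twoheadrightarrow B$ of finitely generated torsion $R$-modules forces $\mathrm{ord}(B)\mid\mathrm{ord}(A)$, since order ideals are multiplicative in short exact sequences. Applied to the surjection above this gives (i). For (ii), note $H_0(G;M)=M_G$ is the module of coinvariants; as $\varphi$ is onto and $M$ is inflated, the relation submodules $(g-1)M$ ($g\in G(K_1)$) and $(q-1)M$ ($q\in G(K_2)$) coincide, so $M_{G(K_1)}\cong M_{G(K_2)}$ and the orders of $H_0$ match. Concretely, choosing $k$ to index a meridian one has $\det\Phi_1(x_k-1)=\det\big(t\rho_2(\mu_2)-I\big)=\det\Phi_2(x_k-1)$, because $\varphi(x_k)$ is conjugate to the meridian $\mu_2$ and conjugation preserves determinants. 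Combining (i) and (ii) shows that $\Delta_{K_1,\rho_2\circ\varphi}(t)/\Delta_{K_2,\rho_2}(t)$ is a Laurent polynomial, i.e. $\Delta_{K_2,\rho_2}(t)\mid\Delta_{K_1,\rho_2\circ\varphi}(t)$.

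The principal obstacle I anticipate is less the core argument than the bookkeeping of orders and the degenerate case. If $\Delta_{K_2,\rho_2}(t)=0$, then $H_1(E(K_2);M)$ has positive $R$-rank, and the surjection forces $H_1(E(K_1);M)$ to have positive rank as well, so $\Delta_{K_1,\rho_2\circ\varphi}(t)=0$ and the claim is trivial; otherwise both modules are torsion and the argument applies verbatim. The genuinely delicate point is to ensure that the order-ideal interpretations of numerator and denominator hold \emph{simultaneously} for $K_1$ and $K_2$, which is why I fix Wirtinger presentations throughout and invoke the identification of the numerator with $\mathrm{ord}\,H_1$ quoted in the excerpt; once that identification is secured, the divisibility is a formal consequence of the surjectivity together with the fact that $R$ is a principal ideal domain.
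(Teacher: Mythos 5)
The paper itself does not prove this theorem: it is stated with a citation to Kitano--Suzuki--Wada, whose argument is a Fox-calculus one. Since $\varphi$ is onto and $G(K_2)$ is finitely presented, $G(K_2)$ admits a presentation obtained from a Wirtinger presentation of $G(K_1)$ by \emph{adding relators}; the twisted Alexander matrix of $K_2$ for that presentation then contains the twisted Alexander matrix of $K_1$ as a submatrix of rows, the numerator of Wada's invariant for the enlarged presentation is a gcd of maximal minors and hence divides each minor coming from the original rows, and presentation-independence of Wada's invariant finishes the proof. Your route is genuinely different and essentially the Kirk--Livingston module-theoretic one: inflate the coefficient module along $\varphi$, extract the surjection $H_1(G(K_1);M)\twoheadrightarrow H_1(G(K_2);M)$ from the five-term exact sequence of $1\to\ker\varphi\to G(K_1)\to G(K_2)\to 1$, and convert it into divisibility of order ideals over the PID $\F[t,t^{-1}]$. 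Your approach buys conceptual clarity (it isolates exactly where meridian-preservation enters, handles the $\Delta_{K_2,\rho_2}=0$ case transparently, and generalizes beyond deficiency-one presentations), while the original matrix argument avoids any appeal to the homological interpretation of Wada's invariant and stays entirely within the definition given in this paper. One point you should tighten: the numerator $\det A_{\rho,k}$ is not literally $\mathrm{ord}\,H_1(E(K);M)$ but rather $\mathrm{ord}\,H_1(E(K),Y_k;M)$ for $Y_k$ the subcomplex carrying the $k$-th generator; the clean identity is
\[
\frac{\det A_{\rho,k}}{\det\Phi(x_k-1)}=\frac{\mathrm{ord}\,H_1(E(K);M)}{\mathrm{ord}\,H_0(E(K);M)},
\]
obtained from the long exact sequence of the pair $(E(K),Y_k)$. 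Your argument survives this correction because you separately verify that the $H_0$-contributions (coinvariants) of $K_1$ and $K_2$ coincide, so the ratio of the two Wada invariants still reduces to $\mathrm{ord}\,H_1(E(K_1);M)/\mathrm{ord}\,H_1(E(K_2);M)$, which your surjection shows is a Laurent polynomial.
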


By using these criterion for $\SL(2;\Z/p\Z)$-representations 
over a finite prime field $\Z/p\Z$, 
we can check the non-existence. 
For the rest, we find epimorphisms between knot groups by using a computer 
and obtain the following list. 

\begin{theorem}[Kitano-Suzuki\cite{Kitano-Suzuki}, Horie-Kitano-Matsumoto-Suzuki\cite{Horie-Kitano-Matsumoto-Suzuki}]
\[
\left.
\begin{array}{l}
8_5,8_{10},8_{15},8_{18},8_{19},8_{20},8_{21},9_1,9_6,9_{16},9_{23},9_{24},9_{28},9_{40},\\
10_5,10_9,10_{32},10_{40},10_{61},10_{62},10_{63},10_{64},10_{65},10_{66},10_{76},10_{77},\\
10_{78},10_{82},10_{84},10_{85},10_{87},10_{98},10_{99},10_{103},10_{106},10_{112},10_{114},\\
10_{139},10_{140},10_{141},10_{142},10_{143},10_{144},10_{159},10_{164}
\end{array}
\right\}\geq 3_1
\]

\[
\left.
\begin{array}{l}
11a_{43}, 11a_{44}, 11a_{46}, 11a_{47}, 11a_{57},11a_{58}, 11a_{71}, 11a_{72}, 11a_{73},\\
11a_{100}, 11a_{106}, 11a_{107},11a_{108}, 11a_{109}, 11a_{117},11a_{134}, 11a_{139}, \\
11a_{157}, 11a_{165},11a_{171}, 11a_{175}, 11a_{176}, 11a_{194}, 11a_{196}, \\
11a_{203}, 11a_{212}, 11a_{216}, 11a_{223}, 11a_{231}, 11a_{232}, 11a_{236}, \\
11a_{244}, 11a_{245}, 11a_{261}, 11a_{263}, 11a_{264}, 11a_{286}, 11a_{305}, 11a_{306}, \\
11a_{318}, 11a_{332}, 11a_{338}, 11a_{340}, 11a_{351}, 11a_{352}, 11a_{355}, \\
11n_{71}, 11n_{72}, 11n_{73}, 11n_{74}, 11n_{75}, 11n_{76}, 11n_{77}, 11n_{78}, 11n_{81}, \\
11n_{85}, 11n_{86}, 11n_{87}, 11n_{94}, 11n_{104}, 11n_{105}, 11n_{106}, 11n_{107}, 11n_{136}, \\
11n_{164}, 11n_{183}, 11n_{184}, 11n_{185}, 
\end{array}
\right\}\geq 3_1
\]

\[
\left.
\begin{array}{l}
9_{18},9_{37},9_{40},9_{58},9_{59},9_{60},10_{122},10_{136},10_{137},10_{138},\\
11a_{5}, 11a_{6}, 11a_{51}, 11a_{132}, 11a_{239}, 11a_{297}, 11a_{348}, 11a_{349}, \\
11n_{100}, 11n_{148}, 11n_{157}, 11n_{165}
\end{array}
\right\}
\geq 4_1
\]

\[
11n_{78}, 11n_{148} \geq 5_1
\]
\[
10_{74},10_{120},10_{122},11n_{71}, 11n_{185}
\geq 5_2
\]
\[
11a_{352} \geq 6_1
\]
\[
11a_{351} \geq 6_2
\]
\[
11a_{47}, 11a_{239} \geq 6_3
\]
\end{theorem}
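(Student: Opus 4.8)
The plan is to treat this as a classification within a fixed crossing bound (knots up to eleven crossings), splitting the work into an existence half and a non-existence half.

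For existence, I would exhibit an explicit meridian-preserving epimorphism for each listed pair, following exactly the pattern of the $8_5\geq 3_1$ computation carried out above. Starting from Wirtinger presentations $G(K_1)=\langle y_1,\dots,y_m\mid\cdots\rangle$ and $G(K_2)=\langle x_1,\dots,x_n\mid\cdots\rangle$, I would search for an assignment $y_i\mapsto w_i$ in which each $w_i$ is a conjugate of some meridian generator $x_j$ (so that meridians are sent to meridians), and then verify mechanically that every Wirtinger relator of $G(K_1)$ maps to the identity in $G(K_2)$. Surjectivity is automatic once the images contain all the $x_j$ up to conjugation. Each such verification is a finite word problem in $G(K_2)$ and can be certified once the correct assignment has been produced by a computer search.

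For non-existence, I would rule out every pair \emph{not} appearing in the list by means of the two divisibility obstructions. First, the classical statement that $K_1\geq K_2$ forces $\Delta_{K_2}(t)$ to divide $\Delta_{K_1}(t)$ eliminates the large majority of candidate pairs immediately. For the survivors I would invoke the theorem of Kitano-Suzuki-Wada: if $K_1\geq K_2$ is realized by an epimorphism $\varphi$, then $\Delta_{K_2,\rho_2}(t)$ divides $\Delta_{K_1,\rho_2\circ\varphi}(t)$ for every representation $\rho_2\colon G(K_2)\to\SL(l;\F)$. Taking $\F=\Z/p\Z$ for small primes $p$ and $l=2$, the set of $\SL(2;\Z/p\Z)$-representations of $G(K_2)$ is finite and explicitly enumerable; for each one I would compute $\Delta_{K_2,\rho_2}(t)$ and compare it with the twisted polynomial of the pullback. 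If some $\rho_2$ yields a nondivisibility for every admissible $\varphi$, then the pair is excluded.

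The hard part will be the non-existence direction, and specifically making the obstruction argument \emph{uniform over all possible} $\varphi$. The twisted criterion is stated for a single fixed epimorphism, whereas here I must preclude every meridian-preserving $\varphi$ at once. The key reduction is that over a finite field the entire representation variety of $G(K_2)$ becomes finite, so only finitely many $\rho_2$ need to be tested, while the meridian-preserving hypothesis sharply constrains how $\varphi$ can act on the meridian generators; together these cut the problem down to a finite, though large, search. Organizing this search so that it is simultaneously complete (every pair inside the crossing bound is decided) and terminating (each undecided pair is resolved by some explicit representation) is the real technical burden, and it is precisely where the computer assistance becomes essential.
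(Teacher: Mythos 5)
Your proposal matches the paper's method exactly: the paper (a survey) proves this theorem only in outline, saying that non-existence is ruled out by the Kitano--Suzuki--Wada divisibility criterion applied to $\SL(2;\Z/p\Z)$-representations over finite prime fields, and that the listed relations are then established by computer search for explicit meridian-preserving assignments on Wirtinger generators, as in the worked example $8_5\geq 3_1$. Your additional observations --- the first-pass filter by classical Alexander polynomial divisibility and the need to make the obstruction uniform over all candidate epimorphisms via finiteness of the representation set over a finite field --- are exactly the points handled in the cited original papers, so the proposal is a faithful (and somewhat more explicit) rendering of the same argument.
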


\subsection{Hasse diagram}

Now let us consider a {Hasse diagram}. 
It is an oriented graph 
for a partial ordering as follows. 

\begin{itemize}
\item
a  {vertex} : a prime knot
\item
an  {oriented ege} : if $K_1\geq K_2$, then we draw it from the vertex of $K_1$ to the one of $K_2$. 
\end{itemize}

Naturally the following problem arises.

\begin{problem}
How can we understand the structure of this Hasse diagram 
of the prime knots under this partial order ?
\end{problem}

By using the Kawauchi's imitation theory\cite{Kawauchi},  
the next theorem can be proved.  
\begin{theorem}[Kawauchi]
For any knot $K$, 
there exists a hyperbolic knot $\tilde{K}$ 
such that 
there exists an epimorphism from $G(\tilde{K})$ onto $G(K)$ 
induced by a {degree one map}. 
\end{theorem}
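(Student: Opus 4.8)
The plan is to reduce the statement to the Corollary above — that a degree one map of knot exteriors induces an epimorphism of knot groups — and to obtain the required degree one map from Kawauchi's imitation theory, which I would use essentially as a black box. Concretely, for the given pair $(S^3,K)$ I would produce a second pair $(S^3,\tilde K)$, with $\tilde K$ hyperbolic, together with an imitation map $q\colon(S^3,\tilde K)\to(S^3,K)$ of degree one; restricting $q$ to exteriors and quoting the Corollary then finishes the argument.

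First I would recall the reflector construction underlying imitation theory. One forms the product $(S^3\times[-1,1],\,K\times[-1,1])$ with its standard reflection $(x,s)\mapsto(x,-s)$, and then replaces this reflection by a suitably chosen involution $\omega$ that agrees with the standard one near the two ends $S^3\times\{\pm1\}$. The imitation $(S^3,\tilde K)$ is the fixed point pair of $\omega$: Kawauchi shows that for a normal choice of $\omega$ the fixed set is again homeomorphic to $S^3$, and the embedded curve $\tilde K=\mathrm{Fix}(\omega)\cap(K\times[-1,1])$ is a knot in it. The first–factor projection $S^3\times[-1,1]\to S^3$ restricts to the imitation map $q\colon(S^3,\tilde K)\to(S^3,K)$; since a generic fiber $\{x\}\times[-1,1]$ meets $\mathrm{Fix}(\omega)$ with signed count $1$, the map $q$ has degree one, carries a tubular neighborhood of $\tilde K$ onto one of $K$, and hence sends meridian to meridian.

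Restricting $q$ to the exteriors gives a proper degree one map $E(\tilde K)\to E(K)$ taking boundary torus to boundary torus. By the Corollary above this induces an epimorphism $G(\tilde K)\to G(K)$, and by the behavior of $q$ on the tubular neighborhoods it is meridian preserving, so indeed $\tilde K\ge K$ in the sense used here.

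The genuinely hard ingredient — and the entire reason for invoking Kawauchi's theory rather than a crude connected–sum or cabling construction — is the simultaneous requirement that $\tilde K$ be hyperbolic. This is where the freedom in choosing $\omega$ must be exploited: one arranges the involution so as to inject enough geometric complexity into $S^3\setminus\tilde K$ that the exterior is irreducible, atoroidal and not Seifert fibered, i.e.\ so that $\tilde K$ is neither a torus knot nor a satellite, whence Thurston's hyperbolization theorem supplies a complete hyperbolic structure on $S^3\setminus\tilde K$. Verifying that a sufficiently complicated normal imitation meets these incompressibility and atoroidality conditions without disturbing the degree one map or the meridian is the delicate step, and it is precisely what Kawauchi's imitation theory is designed to guarantee.
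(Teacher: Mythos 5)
The paper offers no proof of this statement at all: it is quoted as a theorem of Kawauchi with the single remark that it ``can be proved'' using the imitation theory of \cite{Kawauchi}, so there is no argument in the text to compare yours against. Your outline is a faithful reconstruction of how that theory yields the result: you correctly route the final step through the Corollary that a degree one map of exteriors induces a (meridian-preserving) epimorphism, and your description of the reflector construction --- an involution $\omega$ on $(S^3\times[-1,1],K\times[-1,1])$ isotopic to the standard reflection, whose fixed point pair is $(S^3,\tilde K)$ and whose first-factor projection is the degree one imitation map --- matches Kawauchi's setup, including the signed-count-one justification of the degree.

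The honest caveat, which you yourself flag, is that everything of mathematical substance is inside the black box: the existence of a \emph{normal} imitation whose fixed set is again $(S^3,\tilde K)$ with $\tilde K$ hyperbolic is exactly Kawauchi's theorem, and your text does not (and realistically cannot, at this level) verify the irreducibility/atoroidality of $S^3\setminus\tilde K$ or the compatibility of that choice with the degree one and meridian conditions. So what you have written is a correct reduction and a correct attribution of the remaining work, i.e.\ an outline at the same level of detail as the survey itself, rather than a self-contained proof. That is an appropriate outcome here; just be explicit that the hyperbolicity of the imitation is being cited, not derived.
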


As a similar application of Kawauchi's theory, 
we can see the following. 

\begin{proposition}
For any knot $K$, there exists a hyperbolic knot $K'$ 
such that 
there exist two epimorphisms from $G(K')$ onto $G(K)$ as follows. 
The one is induced by {degree one map} and another one is induced by {degree zero map}. 
\end{proposition}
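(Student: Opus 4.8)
The plan is to reduce to the composite knot $K\sharp\bar{K}$ and then make it hyperbolic by Kawauchi's imitation theory. Recall that earlier in this section two epimorphisms onto $G(K)$ were exhibited for the composite knot $K\sharp\bar K$: the collapse map onto the factor $K$ yields an epimorphism $c_\ast:G(K\sharp\bar K)\to G(K)$ induced by a degree one map, while the reflection quotient $(S^3,K\sharp\bar K)\to(S^3,K)$ yields an epimorphism $\sigma_\ast:G(K\sharp\bar K)\to G(K)$ induced by a degree zero map. Thus $K\sharp\bar K$ already carries one degree one and one degree zero epimorphism onto $G(K)$; the only defect is that $K\sharp\bar K$ is composite, hence not hyperbolic.

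First I would apply the preceding theorem of Kawauchi to the knot $J=K\sharp\bar K$. This produces a hyperbolic knot $K'=\tilde J$ together with an epimorphism $\psi:G(K')\to G(K\sharp\bar K)$ that is induced by a proper map of exteriors of degree one. I take this $K'$ as the knot claimed in the proposition.

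Next I would form the two composites. Setting $\varphi_1=c_\ast\circ\psi$ and $\varphi_0=\sigma_\ast\circ\psi$, both are surjective since each factor is, so $\varphi_1,\varphi_0:G(K')\to G(K)$ are epimorphisms. Each is induced by the corresponding composite of proper maps of knot exteriors, and the degree of such a composite is the product of the degrees. Hence $\varphi_1$ is induced by a map of degree $1\cdot 1=1$ and $\varphi_0$ by a map of degree $1\cdot 0=0$, giving exactly the required degree one and degree zero epimorphisms onto $G(K)$.

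The routine points to check are the multiplicativity of mapping degrees under composition of proper maps of pairs $(E(\cdot),\partial E(\cdot))$, and the fact that all three building maps (the collapse, the reflection quotient, and Kawauchi's imitation map) send meridians to meridians, so that the composites remain meridian preserving and stay inside the framework of the partial order. The main obstacle is really the input from Kawauchi's imitation theory: one must know that the imitation of the composite knot $K\sharp\bar K$ can be taken hyperbolic and that the accompanying epimorphism is genuinely realized by a degree one map, which is precisely the content of the theorem quoted above. Granting this, the construction amounts to the two compositions described.
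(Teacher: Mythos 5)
Your argument is correct and takes what is essentially the paper's intended route: the paper gives no written proof beyond the remark that this is ``a similar application of Kawauchi's theory,'' and applying Kawauchi's degree-one imitation to $K\sharp\bar K$ and composing with the degree-one collapse and the degree-zero reflection quotient is precisely that application. The multiplicativity of degree under composition and the surjectivity of the composites are exactly the right routine checks to flag.
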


From the above proposition, 
there exists an epimorphism from a hyperbolic knot to any knot. 
On the other hand, 
the following fact is known. See \cite{Silver-Whitten06, Kitano-Suzuki08}.
\begin{fact}
For any torus knot $K$, 
if there exists an epimorphism 
$\varphi:G(K)\to G(K')$, 
then $K'$ is a torus  knot, too. 
\end{fact}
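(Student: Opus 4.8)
The plan is to exploit the fact that torus knot groups are distinguished among all knot groups by possessing a nontrivial center, and then to argue that this center cannot be destroyed by a surjection. Recall from the previous subsection that $G(K)=G(p,q)=\langle x,y\mid x^{p}=y^{q}\rangle$ and that $z=x^{p}=y^{q}$ generates an infinite cyclic center of $G(K)$. The classical structure theorem of Burde and Zieschang (see \cite{Burde-Zieschang-Heusener}) asserts that a knot group has nontrivial center if and only if the knot is a torus knot; this is exactly the input I would like to feed into $G(K')$.

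First I would observe that the image of a central element under a surjection is again central. Indeed, if $\varphi:G(K)\to G(K')$ is onto and $g'\in G(K')$ is arbitrary, choose $g\in G(K)$ with $\varphi(g)=g'$; then $\varphi(z)\,g'=\varphi(zg)=\varphi(gz)=g'\,\varphi(z)$, so $\varphi(z)\in Z(G(K'))$. Thus it remains only to verify that $\varphi(z)\neq 1$.

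To see this I would pass to abelianizations. Since $\varphi$ is surjective, the induced map $\varphi_{\ast}:H_{1}(G(K);\Z)\to H_{1}(G(K');\Z)$ is a surjection $\Z\to\Z$, hence an isomorphism. On the other hand the abelianization $\alpha$ recorded earlier sends $z=x^{p}$ to $\alpha(x)^{p}=t^{pq}$, so the class of $z$ in $H_{1}(G(K);\Z)\cong\Z$ equals $pq$ times the meridian generator, which is nonzero because $p,q\geq 2$. Applying the isomorphism $\varphi_{\ast}$, the class of $\varphi(z)$ in $H_{1}(G(K');\Z)$ is again nonzero, and therefore $\varphi(z)\neq 1$. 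Consequently $Z(G(K'))$ contains the nontrivial element $\varphi(z)$, and the Burde--Zieschang characterization forces $K'$ to be a torus knot.

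The only genuinely hard ingredient is the classical result identifying torus knots as precisely the knots with nontrivial center; once that is granted, the argument above is elementary, resting only on surjectivity (the meridian-preserving hypothesis is in fact not needed for this conclusion) together with the explicit formula $\alpha(x)=t^{q}$. I would take care only to confirm that $z$ genuinely generates the center and has infinite order, so that its nonvanishing in homology is meaningful; this is part of the standard description of the Seifert-fibered structure of $E(T(p,q))$ and is already recorded in the text.
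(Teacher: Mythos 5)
Your argument is correct. Note first that the paper itself offers no proof of this statement: it is recorded as a ``Fact'' with pointers to Silver--Whitten and Kitano--Suzuki, so there is no in-text argument to compare against. The proof you give is essentially the standard one from that literature: the center of $G(p,q)=\langle x,y\mid x^p=y^q\rangle$ is the infinite cyclic group generated by $z=x^p=y^q$; the image of a central element under a surjection is central; and $\varphi(z)\neq 1$ because $z$ maps to $t^{pq}\neq 1$ under the abelianization and a surjection of knot groups induces an isomorphism $\Z\to\Z$ on $H_1$. All three steps are sound, and you are right that meridian-preservation is never used. Two small points of hygiene: (i) the Burde--Zieschang characterization (``nontrivial center $\Rightarrow$ torus knot'') is stated for \emph{nontrivial} knots, so you should dispose of the degenerate cases separately --- if $K$ is a trivial torus knot then $G(K)\cong\Z$ and $K'$ is forced to be unknotted, and if $K'$ is the unknot then $G(K')\cong\Z$ has center everything and the conclusion holds by convention that the unknot is a (trivial) torus knot; (ii) the nonvanishing of $[z]$ in $H_1$ uses $p,q\neq 0$, which holds since a nontrivial torus knot has $|p|,|q|\geq 2$. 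With those caveats made explicit, the proof is complete and is exactly the intended route.
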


Further we can see this Hasse diagram is {not so simple} as follows. 
The following proposition can be also proved by using the Kawauchi's imitation theory. 

\begin{proposition}
For any two prime knots $K_1$ and $K_2$, there exists a prime knot $K$ 
such that $K\geq K_1$ and $K\geq K_2$. 
\end{proposition}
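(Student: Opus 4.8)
The plan is to realize the required knot $K$ as a hyperbolic imitation of the connected sum $K_1\sharp K_2$. The starting point is that the connected sum already dominates both factors: as recalled earlier in this article, the two collapse maps of degree one induce meridian-preserving epimorphisms $G(K_1\sharp K_2)\rightarrow G(K_1)$ and $G(K_1\sharp K_2)\rightarrow G(K_2)$, so that $K_1\sharp K_2\geq K_1$ and $K_1\sharp K_2\geq K_2$. The only defect is that $K_1\sharp K_2$ is composite and therefore cannot itself serve as the prime knot $K$.

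To repair this I would apply Kawauchi's theorem (stated above) to the knot $K_1\sharp K_2$, obtaining a hyperbolic knot $\tilde K$ together with a degree-one imitation map $E(\tilde K)\rightarrow E(K_1\sharp K_2)$. By the construction of the imitation this map preserves the meridian and induces an epimorphism $G(\tilde K)\rightarrow G(K_1\sharp K_2)$, so that $\tilde K\geq K_1\sharp K_2$. The essential gain is that any hyperbolic knot is prime: the exterior of a nontrivial connected sum contains an essential torus (the swallow-follow torus) and so fails to be atoroidal, while the unknot is not hyperbolic either. Hence $K:=\tilde K$ is a prime knot.

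Finally I would compose. Using transitivity of the relation $\geq$ together with the fact that a composite of meridian-preserving epimorphisms is again meridian-preserving, the chains
\[
G(K)=G(\tilde K)\twoheadrightarrow G(K_1\sharp K_2)\twoheadrightarrow G(K_i)\quad(i=1,2)
\]
yield meridian-preserving epimorphisms $G(K)\rightarrow G(K_1)$ and $G(K)\rightarrow G(K_2)$, that is $K\geq K_1$ and $K\geq K_2$, completing the argument.

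The step I expect to be the main obstacle—indeed the only nonroutine input—is Kawauchi's theorem, which guarantees that the imitation $\tilde K$ can be taken hyperbolic (hence prime) while retaining a degree-one, meridian-preserving epimorphism onto $G(K_1\sharp K_2)$. Everything else reduces to elementary facts: that composite knots collapse onto their factors, that hyperbolicity forces primeness, and that meridian preservation is inherited under composition.
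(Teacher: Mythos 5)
Your proposal is correct and is essentially the paper's intended argument: the paper gives no written proof, saying only that the proposition ``can be also proved by using the Kawauchi's imitation theory,'' and your route --- pass to $K_1\sharp K_2$, which dominates both factors by degree-one collapse maps, then replace it by a Kawauchi hyperbolic (hence prime) imitation $\tilde K$ and compose using transitivity --- is exactly the natural realization of that remark. Your added justifications (hyperbolicity implies primeness via the swallow-follow torus, and meridian preservation of the imitation map and of compositions) fill in details the paper leaves implicit.
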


In our list of partial ordering, 
knots 
\[
3_1,4_1,5_1,5_2,6_1,6_2, 6_3\] 
are {minimal} elements 
in the set of prime knots with up to {11-crossings}. 

Here in fact, 
we can prove that they are {minimal} in the set of {all prime knots}. 

\begin{theorem}[Kitano-Suzuki\cite{Kitano-Suzuki14}]
They are minimal elements in the set of all prime knots.
\end{theorem}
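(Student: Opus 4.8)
The plan is to prove the statement knot-by-knot via the meaning of the partial order. Fix $K\in\{3_1,4_1,5_1,5_2,6_1,6_2,6_3\}$ and suppose $K\ge K'$ for some prime knot $K'$, realized by a meridian-preserving epimorphism $\varphi:G(K)\to G(K')$; the goal is to force $K'=K$. The decisive structural observation is that all seven knots are two-bridge. I would therefore appeal to the theory of epimorphisms between two-bridge knot groups: by the Ohtsuki--Riley--Sakuma construction together with its completeness (cf.\ \cite{Ohtsuki-Riley-Sakuma}, and the resolution of Simon's conjecture for two-bridge knots by Boileau--Boyer--Reid--Wang), any meridian-preserving epimorphism from a two-bridge knot group onto a nontrivial knot group again has two-bridge image. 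Since the trivial knot is not prime, this already guarantees that $K'$ is a two-bridge knot.

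Next I would pin down the Alexander polynomial of $K'$. By the divisibility proposition, $\Delta_{K'}(t)$ divides $\Delta_K(t)$ up to $\pm t^s$, and $\Delta_{K'}$ is symmetric. For each of the seven knots one checks that the only symmetric divisors of $\Delta_K$ are the units and the associates of $\Delta_K$ itself: the polynomials $\Delta_{3_1},\Delta_{4_1},\Delta_{5_1},\Delta_{5_2},\Delta_{6_2},\Delta_{6_3}$ are irreducible over $\Z$ after clearing content, while $\Delta_{6_1}(t)=2t^2-5t+2=(2t-1)(t-2)$ factors, but its two irreducible factors are interchanged by $t\mapsto t^{-1}$, so neither is separately symmetric and no symmetric divisor of intermediate degree exists. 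Hence $\Delta_{K'}\doteq 1$ or $\Delta_{K'}\doteq\Delta_K$. A nontrivial two-bridge knot is alternating, so $\deg\Delta_{K'}=2g(K')>0$ and the first case is impossible; therefore $\Delta_{K'}\doteq\Delta_K$. In particular the determinant $\det K'=|\Delta_{K'}(-1)|=|\Delta_K(-1)|$ is fixed (it equals $3,5,5,7,9,11,13$ for the seven knots), and since a two-bridge knot corresponds by Schubert's classification to a fraction whose denominator equals its determinant, there are only finitely many two-bridge candidates for $K'$.

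It then remains to identify $K'$ inside this explicit finite list. I would check that, among the two-bridge knots of the relevant determinant, $\Delta_K$ is realized only by $K$; whenever two distinct two-bridge knots with the same determinant happen to share the ordinary Alexander polynomial, the twisted divisibility of Kitano--Suzuki--Wada \cite{Kitano-Suzuki-Wada} applied to $\SL(2;\Z/p\Z)$-representations separates them, exactly as in the computation that established minimality among prime knots with at most eleven crossings. This yields $K'=K$ and proves that each of the seven knots is a minimal element of the whole poset of prime knots.

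The step I expect to be the main obstacle is the one that forces $K'$ to be two-bridge. The divisibility of the ordinary and twisted Alexander polynomials controls only the (twisted) Alexander modules, hence only the degree and the determinant of $K'$; it gives no bound on the genus or crossing number of $K'$, and a knot with a prescribed small Alexander polynomial can otherwise be geometrically complicated. Thus the polynomial invariants alone cannot exclude an exotic, non-alternating target, and the genuine content of the argument is the classification of epimorphisms out of two-bridge knot groups, which restores control on the target and reduces the a priori infinitely many possibilities to a finite, checkable list.
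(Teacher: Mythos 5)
The paper itself gives no proof of this theorem; it is quoted from \cite{Kitano-Suzuki14}, so there is no internal argument to compare against. Your strategy is, as far as I can tell, the same as that of the cited paper: use the fact that all seven knots are $2$-bridge, invoke Boileau--Boyer--Reid--Wang \cite{Boileau-Boyer-Reid-Wang} to force any nontrivial epimorphic image to be a $2$-bridge knot (note this is their theorem on Simon's conjecture, not a consequence of the ``completeness'' of the Ohtsuki--Riley--Sakuma construction \cite{Ohtsuki-Riley-Sakuma}, which you do not actually need), and then finish with Alexander polynomial divisibility. Your argument is correct, and the one step you leave as ``I would check'' does go through painlessly: for the relevant determinants $3,5,7,9,11,13$ the $2$-bridge knots $S(p,q)$ are few in number and no two of them share an Alexander polynomial, so the twisted-polynomial fallback is never needed; you correctly identify the reduction to $2$-bridge targets as the genuine content of the proof.
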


By the above results, the following problem appears naturally. 

\begin{problem}
If $K_1\geq K_2$, 
then the {crossing number} of $K_1$ is greater than the one of $K_2$?
\end{problem}

It is clear in the list. 
If it is true in general, 
it gives another proof of the theorem by Agol and Liu. 

\begin{theorem}[Agol-Liu\cite{Agol-Liu}]
Any knot group $G(K)$ {surjects} onto 
only finitely many knot groups.
\end{theorem}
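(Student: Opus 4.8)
The plan is to reduce the statement to a finiteness principle for a suitable complexity invariant of knots. Write $S(K) = \{K' : K \geq K'\}$ for the set of knots whose groups are meridian-preserving quotients of $G(K)$; the goal is that $S(K)$ is finite. The general strategy is to produce a complexity function $c$ on knot types such that (i) $K \geq K'$ forces $c(K') \leq c(K)$, and (ii) for every bound $N$ only finitely many knots satisfy $c \leq N$. Granting both, $S(K)$ is contained in the finite set $\{K' : c(K') \leq c(K)\}$ and we are done, so everything comes down to choosing $c$ and proving the monotonicity (i).

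The first candidate for $c$ is the crossing number, as raised in the Problem just above: if one knew that $K \geq K'$ implies $c(K') \le c(K)$ for crossing number, then since there are only finitely many knots of a given crossing number, finiteness would follow at once. However this monotonicity is precisely the open problem stated above, so I cannot take this route unconditionally. Instead I would use a geometric complexity coming from geometrization: decompose $E(K')$ along its JSJ tori into Seifert-fibered and hyperbolic pieces, and measure $c(K')$ by the number of pieces together with the hyperbolic volumes. For (ii) one invokes Mostow rigidity together with the J\o rgensen--Thurston theorem, which say that there are only finitely many complete hyperbolic $3$-manifolds of volume below any bound, and a parallel combinatorial bound controls the Seifert pieces; this pins down $E(K')$, hence $K'$, up to finitely many choices once $c(K')$ is bounded.

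The heart of the argument, and the main obstacle, is the monotonicity (i) for this geometric complexity under a meridian-preserving epimorphism $\varphi : G(K) \to G(K')$. The natural tool, simplicial volume, is monotone only under maps of nonzero degree, and epimorphisms of knot groups need not come from such maps: the reflection map realizing $G(K \sharp \bar K) \to G(K)$ has degree zero, as noted above. The way around this in the Agol--Liu approach is to replace volume by Delzant's presentation length $T(\cdot)$, a purely group-theoretic complexity, and to prove an inequality of the shape $T(G(K')) \leq C\, T(G(K))$ valid for surjections between knot groups, together with a comparison $\mathrm{Vol}_{\mathrm{hyp}}(E(K')) \leq C'\, T(G(K'))$ bounding the hyperbolic volume above by presentation length. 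I expect establishing this presentation-length inequality to be the genuinely hard step: simple examples such as $\mathbb{Z} \twoheadrightarrow \mathbb{Z}/n$ show that presentation length is not monotone under arbitrary surjections, so the proof must exploit the special structure of knot groups (torsion-freeness, $H_1 \cong \mathbb{Z}$, and the meridian-preserving hypothesis) and the underlying $3$-manifold topology rather than formal group theory.

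Finally, I would note how the (twisted) Alexander polynomial fits in. The divisibility statements above --- $\Delta_{K'} \mid \Delta_K$ and, more finely, $\Delta_{K', \rho} \mid \Delta_{K, \rho \circ \varphi}$ for every representation $\rho$ of $G(K')$ --- give strong necessary conditions on the possible targets $K'$ and are exactly what one uses to rule out candidates in the explicit tables above. They do not by themselves bound the cardinality of $S(K)$, since infinitely many knots can share an Alexander polynomial, so they serve as a verification and classification tool rather than as the engine of the finiteness proof.
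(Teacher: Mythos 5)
The paper does not actually prove this statement: it is quoted as a theorem of Agol and Liu, and the surrounding text only remarks that a positive answer to the crossing-number monotonicity problem would give \emph{another} proof. So there is no in-paper argument to compare yours against, and your proposal has to stand on its own. As it stands it has two genuine gaps, one of which is a wrong step rather than a missing one.

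The wrong step is your finiteness claim (ii) for the geometric complexity you choose. The J\o rgensen--Thurston theorem does \emph{not} say that there are only finitely many complete hyperbolic $3$-manifolds of volume below a given bound; it says the set of volumes is a well-ordered set in which each value is attained only finitely often, and volumes of Dehn fillings accumulate from below at the volume of the cusped manifold being filled. Concretely, the twist knots are obtained by $1/n$-surgery on one component of the Whitehead link and give infinitely many distinct hyperbolic knots whose complements all have volume below $3.67$. Hence bounding the number of geometric pieces of $E(K')$ and the volumes of the hyperbolic pieces does not confine $K'$ to a finite set, and your reduction collapses exactly where it was supposed to deliver finiteness; the actual Agol--Liu argument has to work considerably harder at this point than ``volume bounded, hence finitely many targets.'' The second gap is one you flag yourself: the monotonicity $T(G(K')) \leq C\,T(G(K))$ for Delzant's presentation length under knot-group surjections is deferred as ``the genuinely hard step,'' and it is indeed the technical heart of the Agol--Liu paper. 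With that inequality unproven and the finiteness step incorrect, what you have is a correctly oriented outline of the strategy (including the accurate observation that degree and simplicial-volume arguments fail because epimorphisms can have degree zero), not a proof. Your closing remarks on divisibility of (twisted) Alexander polynomials are consistent with the paper and correctly identified as playing no role in the finiteness.
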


\begin{remark}
This statement was called the Simon's conjecture. See \cite{Kazez}. 
\end{remark}

\subsection{Epimorphisms induced by degree zero maps}

Boileau, Boyer, Reid and Wang proved the following. 

\begin{proposition}[Boileau-Boyer-Reid-Wang\cite{Boileau-Boyer-Reid-Wang}]
Any {epimorphism} between {2-bridge hyperbolic knots} is always induced from {a non zero degree map}. 
\end{proposition}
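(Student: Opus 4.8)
The plan is to reduce the statement to a single question about the longitude, and then to attack that question using the special structure of two-bridge knots. Throughout write $\mu_i,\lambda_i$ for a meridian and the preferred longitude of $K_i$, so that the peripheral subgroup of $G(K_i)$ is $\langle\mu_i,\lambda_i\rangle\cong\Z^2$. Let $\varphi:G(K_1)\to G(K_2)$ be a meridian-preserving epimorphism, so $\varphi(\mu_1)$ is conjugate to $\mu_2$; after conjugating we may assume $\varphi(\mu_1)=\mu_2$. Since $K_2$ is hyperbolic, $\mu_2$ is parabolic under the holonomy and its centralizer in $G(K_2)$ is exactly the peripheral subgroup $\langle\mu_2,\lambda_2\rangle$. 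As $\varphi(\lambda_1)$ commutes with $\varphi(\mu_1)=\mu_2$, we get $\varphi(\lambda_1)=\lambda_2^{a}\mu_2^{b}$ for integers $a,b$; in particular $\varphi$ carries the peripheral subgroup into the peripheral subgroup. Because $E(K_2)$ is aspherical with incompressible boundary, $\varphi$ is induced by a proper map $f:(E(K_1),\partial)\to(E(K_2),\partial)$.

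First I would compute the degree from the boundary. For a proper map of compact oriented $3$-manifolds with connected boundary, naturality of the connecting homomorphism $H_3(\,\cdot\,,\partial)\to H_2(\partial)$ gives $\deg f=\deg(f|_{\partial})$. On the peripheral torus $f$ acts on $H_1$ by $\mu_1\mapsto\mu_2$ and $\lambda_1\mapsto a\lambda_2+b\mu_2$, whose determinant is $a$; hence $|\deg f|=|a|$. Finally, since $\lambda_1$ is null-homologous, $\varphi(\lambda_1)$ lies in $[G(K_2),G(K_2)]$, so $a=0$ forces $b=0$ and therefore $\varphi(\lambda_1)=1$. Thus $\deg f\neq 0$ if and only if $\varphi(\lambda_1)\neq 1$, and the proposition is equivalent to the assertion that a meridian-preserving epimorphism between two-bridge hyperbolic knot groups never kills the longitude.

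For this core step I would exploit the classification of epimorphisms between two-bridge knot groups. Note first that composing $\varphi$ with the faithful holonomy of $K_2$ gives no extra information: $\varphi(\lambda_1)=1$ iff $\lambda_2^a\mu_2^b=1$ in the torsion-free cusp group, which is the same condition. So one must use global structure on the source side. The plan is to invoke the Ohtsuki--Riley--Sakuma construction \cite{Ohtsuki-Riley-Sakuma}: up to equivalence, every meridian-preserving epimorphism between two-bridge knot groups should arise from their continued-fraction data, and each such epimorphism is realized by a ``branched fold'' map of nonzero degree. Tracing the preferred longitude through this combinatorial data would then show $\varphi(\lambda_1)$ has a nonzero $\lambda_2$-exponent, i.e. $a\neq 0$. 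As an alternative, if $\varphi(\lambda_1)=1$ then $\varphi$ factors through $\pi_1$ of the $0$-surgery $M_0(K_1)$, and one can seek a contradiction from the fact that two-bridge knots are small (no closed essential surface) and hyperbolic, which tightly constrains the essential surfaces and slopes in $M_0(K_1)$.

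I expect the main obstacle to be precisely this core step. The reduction above is formal, but degree-zero maps carry no Gromov-norm or volume obstruction, so the standard inequality $\mathrm{vol}(E(K_1))\geq|\deg f|\,\mathrm{vol}(E(K_2))$ is vacuous and cannot exclude $a=0$. One is therefore forced either to establish the converse direction of the Ohtsuki--Riley--Sakuma correspondence for two-bridge knots---showing there are no exotic epimorphisms beyond the geometric ones, which is the genuinely hard input---or to carry out the delicate direct computation of $\varphi(\lambda_1)$ from the two-generator one-relator presentation, where the freedom in the choice of meridional generators makes the bookkeeping intricate. I would pursue the classification route as the more robust of the two.
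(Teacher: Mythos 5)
The paper itself offers no proof of this proposition: it is quoted verbatim from Boileau--Boyer--Reid--Wang \cite{Boileau-Boyer-Reid-Wang}, so there is no in-text argument to measure your attempt against. Judged on its own, your reduction is correct and is indeed the standard first move: once $\varphi$ carries the peripheral subgroup of $K_1$ into that of $K_2$ (which your centralizer argument gives for a \emph{meridian-preserving} $\varphi$ and hyperbolic $K_2$), asphericity realizes $\varphi$ by a proper map of pairs, the degree is computed on the boundary torus as the $\lambda_2$-exponent $a$ of $\varphi(\lambda_1)$, and homology forces $b=0$, so the whole statement collapses to: the longitude does not die. Two caveats: the proposition as stated does not assume $\varphi$ preserves meridians, so peripherality of $\varphi(\mu_1)$ is itself something to be proved; and $b=0$ holds for every $a$ (since $\lambda_2$ is already null-homologous), not only when $a=0$ --- harmless, but worth stating correctly.

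The genuine gap is the core step, which you explicitly leave open. The route you say you would pursue --- that every meridian-preserving epimorphism between $2$-bridge knot groups arises from the Ohtsuki--Riley--Sakuma construction --- is not available: at the time of \cite{Boileau-Boyer-Reid-Wang} (and of this survey) that classification was only a conjecture of Ohtsuki--Riley--Sakuma, and it is a theorem strictly harder than, and logically independent of, the statement you are trying to prove; invoking it makes the argument rest on an unproved input. The route that actually works is essentially your ``alternative'': if $\varphi(\lambda_1)=1$ then $\varphi$ factors through $\pi_1$ of the $0$-surgery, and Boileau--Boyer--Reid--Wang derive a contradiction by pulling back a positive-dimensional piece of the $\SL(2;\C)$-character variety of $G(K_2)$ to $G(K_1)$ and applying Culler--Shalen theory together with the facts that $2$-bridge knots are small and have only integral boundary slopes; an ideal point would produce a closed essential surface or an essential surface with longitudinal boundary slope in $E(K_1)$, neither of which can exist. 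Without carrying out that (or some equivalent) argument, the proposal establishes only the formal reduction, not the proposition.
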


On the other hand, there are some interesting examples in our list as follows. 

\begin{example}
Here $10_{59},\ 10_{137}$ are {3-bridge hyperbolic} knots. 
From the list one has 
$10_{59},\ 10_{137}\geq 4_1$, that is, there exist epimorphisms 
\[
G(10_{59}),G(10_{137})\rightarrow G(4_1).
\]
However there is no non-zero degree map between them. 
Namely any epimorphism induced by a proper map between these knot exteriors is induced 
from {a degree zero map}. 
\end{example}




Here recall the Alexander module of a knot. 
We take a $\Z$-covering 
\[
{E}_\infty(K)\to E(K)
\]
associated to $\alpha:G(K)\rightarrow\Z\cong <t>$. 
Here a group ring $\Z[\Z]\cong\Z[t,t^{-1}]$ acts on $H_1({E}_\infty(K);\Z)$ 
and it gives a structure of a module over a Laurent polynomial ring $\Z[t,t^{-1}]$ 
on $H_1({E}_\infty(K);\Z)$. 
This module is called the Alexander module of $K$ over $\Z$. 

\begin{remark}
If we consider the Alexander module over $\Q$, 
a generator of its order ideal is just Alexander polynomial of $K$. 
\end{remark}

To see that there are no non-zero degree maps, we have to study the structure of {Alexander modules}.
The following facts are well known 
in the theory of surgeries on compact manifolds. 
For example, see in the book by Wall\cite{Wall}. 

\begin{fact}
If there exits an epimorphism 
\[\varphi_*:G(K)\rightarrow G(K')
\]
induced 
from a non zero degree map\ (resp. a degree one map) 
\[
E(K)\rightarrow E(K'),\] 
then its induced epimorphism 
\[
H_1({E}_\infty(K);\Q)\rightarrow H_1({E}_\infty(K');\Q)
\]
between their Alexander modules over $\Q$
\ (resp. over $\Z$)
is split over $\Q$\ (resp. {$\Z$}). 
\end{fact}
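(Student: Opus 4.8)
The plan is to realize the map on Alexander modules as a morphism of $\Lambda$-modules (with $\Lambda=\Q[t,t^{-1}]$ in the non-zero degree case and $\Lambda=\Z[t,t^{-1}]$ in the degree one case) and then to split it by an equivariant umkehr (transfer) map built from Poincar\'e--Lefschetz duality. First I would use the hypothesis that $\varphi_\ast$ carries a meridian of $K_1$ to a meridian of $K_2$. This says exactly that $\alpha_{K'}\circ\varphi_\ast=\alpha_K$, so $\varphi_\ast$ is compatible with the two abelianizations. Consequently the degree $d$ map $f\colon E(K)\to E(K')$ lifts to a $\Z$-equivariant map $\tilde f\colon E_\infty(K)\to E_\infty(K')$ of infinite cyclic covers, and the induced map on $H_1$ of these covers is a homomorphism of $\Lambda$-modules. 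Invoking the identification $H_1(E_\infty(K);R)\cong H_1(E(K);\Lambda_\alpha)$ recalled earlier in the paper (for $R=\Q$ or $\Z$), I would from now on work with the twisted homology of the \emph{compact} exteriors $E(K),E(K')$, which are oriented $3$-manifolds with torus boundary.

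The core of the argument is the construction of a transfer. Since $f$ is a proper map of pairs and both exteriors are compact oriented $3$-manifolds with boundary, twisted Poincar\'e--Lefschetz duality gives isomorphisms $H_k(E(K');\Lambda)\cong H^{3-k}(E(K'),\partial E(K');\Lambda)$ and likewise for $E(K)$, all obtained by capping with the (untwisted) fundamental class. Pulling cohomology back along $f$ between these dual descriptions produces an umkehr map
\[
f^!\colon H_k\bigl(E(K');\Lambda\bigr)\longrightarrow H_k\bigl(E(K);\Lambda\bigr),
\]
which is $\Lambda$-linear because $\tilde f$ is equivariant and duality is natural for the deck action. The key computation is the projection formula for the cap product combined with $f_\ast[E(K)]=d\,[E(K')]$ on fundamental classes, which yields
\[
f_\ast\circ f^!=d\cdot\mathrm{id}
\]
on $H_k(E(K');\Lambda)$ for every $k$, in particular for $k=1$.

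From this identity the conclusion follows at once. In the non-zero degree case the integer $d$ is invertible in $\Q$, hence a unit of $\Lambda=\Q[t,t^{-1}]$, so $\tfrac{1}{d}f^!$ is a $\Lambda$-linear section of $f_\ast$ and the map of Alexander modules splits over $\Q$. In the degree one case $d=1$ and $f^!$ itself is a $\Z[t,t^{-1}]$-linear section, giving a splitting over $\Z$. Surjectivity of $f_\ast$ on $H_1$ need not be assumed separately: it is forced by the existence of a section whose composite with $f_\ast$ is an isomorphism.

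The step I expect to be the main obstacle is making the equivariant duality and the identity $f_\ast\circ f^!=d\cdot\mathrm{id}$ genuinely rigorous in the twisted setting over the non-compact covers. Concretely, I must verify that the local system pulled back from $E(K')$ coincides with the one on $E(K)$ (this is precisely where the meridian-preserving hypothesis enters, via $\alpha_{K'}\circ\varphi_\ast=\alpha_K$), and I must handle the torus boundary carefully so that the fundamental classes of the pairs and the mixed (twisted cohomology against untwisted fundamental class) projection formula behave exactly as in the closed case. Once these compatibilities are checked, the transfer argument goes through verbatim.
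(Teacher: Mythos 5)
The paper itself offers no proof of this Fact --- it is stated as ``well known in the theory of surgeries on compact manifolds'' with a pointer to Wall's book --- so there is nothing internal to compare against. Your umkehr-map argument is exactly the standard proof that the citation is gesturing at, and it is correct in outline: the meridian-preserving hypothesis gives $\alpha_{K'}\circ\varphi_\ast=\alpha_K$, hence an equivariant lift to the infinite cyclic covers and the identification of the pulled-back local system with $\Lambda_\alpha$; twisted Poincar\'e--Lefschetz duality for the compact exteriors (capping $\Lambda$-cohomology of the pair against the untwisted fundamental class) is valid for any local system on a compact oriented manifold with boundary; and the projection formula then yields $f_\ast\circ f^{!}=d\cdot\mathrm{id}$, so $\tfrac{1}{d}f^{!}$ (resp.\ $f^{!}$ when $d=1$) is the desired $\Lambda$-linear section. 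The two points you flag as delicate are indeed the only ones requiring care, and both are routine verifications rather than gaps. One small remark: even without the meridian condition, surjectivity of $\varphi_\ast$ forces $\alpha_{K'}\circ\varphi_\ast=\pm\alpha_K$ since $H_1(E(K);\Z)\cong\Z$, so the argument survives with at worst an orientation reversal of the covering translation.
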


\begin{remark}
The twisted Alexander module version of the above fact 
may be a refinement of the divisibility of twisted Alexander polynomials.
\end{remark}

\begin{example}
By similar observation for Alexander modules, 
we can see the followings.  

\begin{itemize}
\item
$9_{24}\geq 3_1$ and $11a_5\geq 4_1$. 
\item
Any epimorphism induced by a proper map between these knot exteriors is induced 
only from an degree zero map. 
\end{itemize}
\end{example}

\begin{remark}
Here $10_{59},\ 10_{137}, 9_{24}$ are Montesinos knots given as follows.
\begin{itemize}
\item
$10_{59}=M(-1;(5,2),(5,-2),(2,1))$,
\item
$10_{137}=M(0;(5,2),(5,-2),(2,1))$,
\item
$9_{24}=M(-1;(3,1),(3,2),(2,1))$.
\end{itemize}
\end{remark}

How there exists an epimorphism between them ?
Recall the geometric observation by {Ohtsuki-Riley-Sakuma} in \cite{Ohtsuki-Riley-Sakuma}. 

Here we assume that 
\[\varphi:G(K)\rightarrow G(K')
\]
is an epimorphism. 

We take a simple closed curve 
${\gamma}\subset S^3\cup K$ 
which belongs to {Ker}$\varphi\subset G(K)$. 
Then if $\gamma$ is an {unknot} 
in $S^3$, by taking the surgery along $\gamma$, 
we get a new knot $\tilde{K}$ in $S^3$ 
such that there exists 
an epimorphism $G(\tilde{K})\rightarrow G(K')$.

We can apply this construction to $4_1\sharp \bar{4}_1=4_1\sharp 4_1$. 
First we recall that there exists an epimorphism 
\[
G(4_1\sharp \bar{4_1})\rightarrow G(4_1)
\]
which is a quotient map of a reflection. 
Then it is induced from a degree zero map. 
By surgery along some simple closed curve, 
one has both of 
\[
G(10_{59})\rightarrow G(4_1),\] 
and 
\[
G(10_{137})\rightarrow G(4_1).\] 

More generally we can see the following 
by applying this construction to any 2-bridge knot. 
It was not written explicitly, but essentially in \cite{Ohtsuki-Riley-Sakuma} 
by Ohtsuki, Riley and Sakuma. 

\begin{proposition}
For any 2-bridge knot $K$, 
there exists a Montesinos knot $\tilde{K}$ 
such that 
there exists an epimorphism 
\[G(\tilde{K})\rightarrow G(K)\]
induced from a degree zero map 
$E(\tilde{K})\rightarrow E(K)$. 
\end{proposition}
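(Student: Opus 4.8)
The plan is to follow the Ohtsuki--Riley--Sakuma construction, building $\tilde{K}$ from the reflection symmetry of $K\sharp\bar{K}$ together with a single unknotting surgery, exactly as in the worked cases $10_{59},10_{137}\geq 4_1$ recalled just above. First I would fix a description of $K$ as a $2$-bridge knot by its rational tangle $(p,q)$, and record that the mirror image $\bar{K}$ is then the tangle $(p,-q)$. I would form the composite knot $K\sharp\bar{K}$ and place it in $S^3$ symmetrically with respect to the reflection $\sigma$ in a $2$-sphere meeting the knot in the two points of the connected-sum region. By the discussion of $K\sharp\bar{K}$ earlier in the paper, $\sigma$ induces a degree zero epimorphism
\[
\varphi_0:G(K\sharp\bar{K})\longrightarrow G(K),
\]
and this is the map I will propagate through the surgery.

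Next I would locate the surgery curve. Take the simple closed curve $\gamma$ lying on $\mathrm{Fix}(\sigma)$ that encircles the two strands passing through the connected-sum region. Two facts must be checked: that $\gamma$ is unknotted in $S^3$ (immediate, since it bounds a disk inside the fixed sphere), and that $[\gamma]\in\ker\varphi_0$. The latter is where the reflection symmetry is used: tracking $\gamma$ through a Wirtinger-type presentation, its two intersection points with $K\sharp\bar{K}$ carry meridians interchanged by $\sigma$, which the folding quotient $\varphi_0$ identifies, so that $\varphi_0[\gamma]=1$. Performing $\pm1$ surgery along $\gamma$ keeps the ambient manifold $S^3$ and turns $K\sharp\bar{K}$ into a new knot $\tilde{K}$; because $[\gamma]\in\ker\varphi_0$, the extension argument stated just before this proposition carries $\varphi_0$ across the surgery to an epimorphism $G(\tilde{K})\to G(K)$. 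The corresponding proper map $E(\tilde{K})\to E(K)$ still has degree zero, since it coincides with the reflection quotient away from the surgery solid torus and the map over that torus factors through a disk (as $\gamma$ dies in $G(K)$), contributing nothing to the mapping degree.

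The remaining and, I expect, principal task is to identify $\tilde{K}$ as a Montesinos knot of the stated shape. I would analyze the effect of the $\pm1$ surgery on the tangle diagram: the surgery performs a full twist on the two strands captured by $\gamma$, and in the symmetric diagram for $K\sharp\bar{K}$ — whose two halves realize the tangles $(p,q)$ and $(p,-q)$ — this twist inserts the clasping rational tangle $(2,1)$ together with a framing correction recorded by an integer $e\in\{0,-1\}$ depending on the sign of the surgery. This yields
\[
\tilde{K}=M\bigl(e;(p,q),(p,-q),(2,1)\bigr),
\]
matching $10_{59}=M(-1;(5,2),(5,-2),(2,1))$ and $10_{137}=M(0;(5,2),(5,-2),(2,1))$ in the amphichiral case $K=4_1$.

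The hard part is this last tangle bookkeeping: verifying that the $\pm1$ surgery genuinely produces the third rational tangle $(2,1)$ rather than some other tangle, and pinning down the admissible values of $e$, which requires a careful Conway-tangle and framing computation on the symmetric diagram. Once this identification is secured, combining it with the extended epimorphism $G(\tilde{K})\to G(K)$ and the degree zero statement completes the proof. I would also remark that the construction is uniform in $K$, so the single curve $\gamma$ works for every $2$-bridge $K$, which is what makes the statement hold in full generality.
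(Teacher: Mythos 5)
Your proposal follows the same route as the paper: apply the Ohtsuki--Riley--Sakuma surgery construction to the degree-zero reflection epimorphism $G(K\sharp\bar{K})\rightarrow G(K)$, surgering along an unknotted curve in its kernel and identifying the resulting knot as a Montesinos knot, exactly as in the model cases $10_{59},10_{137}\geq 4_1$ recalled in the text. The paper itself only sketches this argument (deferring the tangle identification to \cite{Ohtsuki-Riley-Sakuma}), and your write-up correctly isolates that identification as the remaining computation.
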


Return to the list of knots with up to 10-crossings. 
We can find epimorphisms explicitly, but have not found all epimorphisms if there exist. 

For the epimorpshism we could find, 
the following partial order relations can be realized by epimorphisms induced from degree zero maps. \[
\left.
\begin{array}{l}
{8_{10}, \ 8_{20}, \ 9_{24}, \ 10_{62}, \ 10_{65}, \ 10_{77}, }\\
10_{82}, \ 10_{87}, \ 10_{99}, \ {10_{140}, \ 10_{143}}
\end{array}
\right\}
\geq 3_1
\]

\[{10_{59},\ 10_{157}}\geq 4_1
\]

In this list, {Montesinos knots} appear as above.

\begin{remark}
The other knots are given by Conway's notation\cite{Conway} as follows:
\begin{itemize}
\item
$10_{82}=6**4.2,$
\item
$10_{87}=6**22.20,$
\item
$10_{99}=6**2.2.20.20$
\end{itemize}
About the above degree zero maps, 
it might be understood from this classification. 
\end{remark}

\subsection{Problems}

Finally we put a list of problems. 

\noindent
\begin{itemize}
\item
Characterize a minimal knot in the set of prime knots under the partial order. 
\item
Characterize an epimorphism induced from a degree zero map.
\item
If $K_1,K_2$ are hyperbolic knots and $K_{1}\geq K_{2}$, 
then the hyperbolic volume of $S^3\setminus{K_{1}}$ is greater than or equal to the one of $S^3\setminus{K_{2}}$ ?
\item
How strong is twisted Alexander polynomial for a representation over a finite field ?
\begin{itemize}
\item
To determine the non-existence of an epimorphism.
\item
To detect the fiberedness.
\end{itemize}
For example, is it true that $K$ is fibered 
if any twisted Alexander polynomial is monic for any 2-dimensional unimodular representation 
over a finite prime field ?
\item
By using twisted Alexander module, give a generalization of the method to determine existence of epimorphism by using Alexander module. 
\item
Find skein relation for twisted Alexander polynomial.
\end{itemize}


\end{document}